\newcommand{\edited}[1]{{#1}}
\newcommand{\Editnote}[1]{}
\newtheorem{lemma}{Lemma}[section]
\newtheorem{theorem}[lemma]{Theorem}
\newtheorem{prop}[lemma]{Proposition}
\newtheorem{cor}[lemma]{Corollary}
\newtheorem{thm}[lemma]{Theorem}
\newtheorem{defn}[lemma]{Definition}
\newtheorem{example}[lemma]{Example}
\newtheorem*{thmI}{Theorem I}
\newtheorem*{prop*}{Proposition}
\theoremstyle{definition}
\newtheorem{remark}[lemma]{Remark}
\newtheorem{remarks}[lemma]{Remarks}
\newcommand{\A}{{\mathbb A}}
\newcommand{\G}{{\mathbb G}}
\newcommand{\PP}{{\mathbb P}}
\newcommand{\F}{{\mathbb F}}
\newcommand{\Q}{{\mathbb Q}}
\newcommand{\Z}{{\mathbb Z}}
\newcommand{\Qbar}{{\overline{\Q}}}
\newcommand{\Xbar}{{\overline{X}}}
\newcommand{\Cbar}{{\overline{C}}}
\newcommand{\Ebar}{{\overline{E}}}
\newcommand{\kk}{{\mathbf k}}
\newcommand{\mm}{{\mathfrak m}}
\newcommand{\frakL}{\mathfrak L}
\newcommand{\frakS}{\mathfrak S}
\newcommand{\calC}{{\mathcal C}}
\newcommand{\calE}{{\mathcal E}}
\newcommand{\calP}{{\mathcal P}}
\newcommand{\OO}{{\mathcal O}}
\DeclareMathOperator{\rk}{rk}
\DeclareMathOperator{\inv}{inv}
\DeclareMathOperator{\im}{im}
\DeclareMathOperator{\Gal}{Gal}
\DeclareMathOperator{\Cor}{Cor}
\DeclareMathOperator{\Res}{Res}
\DeclareMathOperator{\Norm}{Norm}
\DeclareMathOperator{\Br}{Br}
\DeclareMathOperator{\divv}{div}
\DeclareMathOperator{\Div}{Div}
\DeclareMathOperator{\Pic}{Pic}
\DeclareMathOperator{\NS}{NS}
\DeclareMathOperator{\Jac}{Jac}
\DeclareMathOperator{\Num}{Num}
\DeclareMathOperator{\Spec}{Spec}
\DeclareMathOperator{\HH}{H}
\DeclareMathOperator{\hh}{h}
\DeclareMathOperator{\res}{res}
\DeclareMathOperator{\Princ}{Princ}
\DeclareMathOperator{\rank}{rank}
\newcommand{\isom}{\cong}
\newcommand{\To}{\longrightarrow}
\newcommand{\BOfl}{B^{0,\textup{fl}}}
\newcommand{\Bfl}{B^{\textup{fl}}}
\numberwithin{equation}{section}
\numberwithin{table}{section}
\newcommand{\defi}[1]{\textsf{#1}} % for defined terms
\title{On Brauer groups of double covers of ruled surfaces}
\author{Brendan Creutz}
\address{Department of Mathematics and Statistics, University of Canterbury, Private Bag 4800, Christchurch 8140, New Zealand}
\email{brendan.creutz@canterbury.ac.nz}
\urladdr{http://www.math.canterbury.ac.nz/\~{}b.creutz}
\author{Bianca Viray}
\thanks{The second author was partially supported by NSF grant DMS-1002933.}
\address{University of Washington, Department of Mathematics, Box 354350, Seattle, WA 98195, USA}
\email{bviray@math.washington.edu}
\urladdr{http://math.washington.edu/\~{}bviray}
\date{}
\subjclass{14F22, 14G05}
\begin{document}
	%%%%%%%%%%%%%%%%%%%%%%%%%%%%%%%%%%%%%%%%%%%%%%%%%%%%%%%%%%%%%%%%%%%%%%%%%%%%
	\begin{abstract}
	Let $X$ be a smooth double cover of a geometrically ruled surface defined over a separably closed field of characteristic different from $2$. The main result of this paper is a finite presentation of the $2$-torsion in the Brauer group of $X$ with generators given by central simple algebras over the function field of $X$ and relations coming from the N\'eron-Severi group of $X$. In particular, the result gives a central simple algebra representative for the unique nontrivial Brauer class on any Enriques surface.  An example demonstrating the applications to the study of rational points is given.
	\end{abstract}
	%%%%%%%%%%%%%%%%%%%%%%%%%%%%%%%%%%%%%%%%%%%%%%%%%%%%%%%%%%%%%%%%%%%%%%%%%%%%
	\maketitle
	% \pagebreak
	\section{Introduction}

	%%%%%%%%%%%%%%%%%%%%%%%%%%%%%%%%%%%%%%%%%%%%%%%%%%%%%%%%%%%%%%%%%%%%%%%%%%%%
	\subsection{Outline of the main results}%%%%%%%%%%%%%%%%%%%%%%%%%%%%%%%%%%%%
	%%%%%%%%%%%%%%%%%%%%%%%%%%%%%%%%%%%%%%%%%%%%%%%%%%%%%%%%%%%%%%%%%%%%%%%%%%%%
		Let $X_\circ$ be a smooth, projective and geometrically integral variety over a field $k_\circ$. Let $k$ be a separable closure of $k_\circ$ and let $X$ denote the base change of $X_\circ$ to $k$. The \defi{Brauer group} of $X_\circ$, denoted $\Br X_\circ$, is a generalization of the usual notion of the Brauer group of a field. Our results concern the $2$-torsion in $\Br X_\circ$ for $X_\circ$ a desingularization of a double cover of a ruled surface. Up to birational equivalence, this class of varieties contains all surfaces with an elliptic or hyperelliptic fibration, all double covers of $\PP^2$, and (at least over a separably closed field) all Enriques surfaces.  Under fairly mild assumptions, we obtain a finite presentation of the $\Gal(k/k_\circ)$-module $\Br X[2]$ in terms of generators given by unramified central simple algebras over the function field $\kk(X)$ and relations coming from the N\'eron-Severi group of $X$. We prove the following.
		
		 \begin{thmI}
			Let $S_\circ \to W_\circ$ be a geometrically ruled surface defined over a field $k_\circ$ of characteristic different from $2$, and let $X_\circ$ be \edited{a}\Editnote{Comment 15} desingularization of a double cover of $S_\circ$ branched over a reduced curve $B_\circ$ that is flat over $W_\circ$ and has at worst simple singularities. Let $\frak{L}_{c,\calE} \subseteq \kk(B)^\times/\kk(W)^\times\kk(B)^{\times 2}$ denote the \edited{finitely generated} subgroup defined in \S\ref{sec:BrX2presentation}. The map $\gamma':\kk(B) \to \Br \kk(X)$ defined in \S\ref{subsec:curves} induces an exact sequence of $\Gal(k/k_\circ)$-modules,
			\[
				\frac{\NS X}{2 \NS X}\stackrel{x - \alpha}{\To}
				\frakL_{c,\calE}\stackrel{\gamma}{\To}
				\Br X[2] \To 0\,,
			\]
			where $\NS X$ denotes the N\'eron-Severi group of $X$.
		\end{thmI}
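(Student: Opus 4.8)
The plan is to identify $\Br X[2]$ with the unramified $2$-torsion inside $H^2(\kk(X),\mu_2)\cong\Br\kk(X)[2]$ and to organize everything around two exact sequences on the smooth surface $X$. The first is the Kummer sequence
\[
0\To \Pic X/2\Pic X\To H^2(X,\mu_2)\To \Br X[2]\To 0,
\]
in which $\Pic X/2\Pic X=\NS X/2\NS X$ because $\Pic^0 X$ is $2$-divisible over the separably closed field $k$. The second is the Bloch--Ogus/Gersten residue complex, which realizes $H^2(X,\mu_2)$ as the classes in $H^2(\kk(X),\mu_2)$ annihilated by every residue $\partial_C\colon H^2(\kk(X),\mu_2)\to H^1(\kk(C),\Z/2)$ along a prime divisor $C\subset X$. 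Relative to this setup the theorem breaks into four tasks: (0) that $\gamma'$ sends $\frakL_{c,\calE}$ into $\Br X[2]$, so $\gamma$ is defined; (1) that $\gamma\circ(x-\alpha)=0$; (2) that $\gamma$ is surjective; and (3) that $\ker\gamma\subseteq\im(x-\alpha)$.

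For (0) I would compute the residues $\partial_C\gamma'(b)$ at every prime divisor $C\subset X$, using the functoriality of residues under corestriction (the class $\gamma'(b)$ being built by a symbol/corestriction construction attached to the branch curve). The divisors fall into a few geometric types: the ramification divisor $R$ lying over $B$, the exceptional curves resolving the simple singularities of $B$, and the horizontal and vertical curves of the fibration $X\to W$. The residue along $R$ essentially returns $b$, so the constancy/reciprocity condition indexed by $c$ (coming from the base curve $W$) and the vanishing conditions indexed by $\calE$ (along the exceptional curves) are precisely what force every $\partial_C\gamma'(b)$ to vanish; the flatness of $B$ over $W$ and the simple-singularity hypothesis are what make the vertical fibers and the exceptional configuration explicit enough to run this computation.

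Surjectivity (2) is where the ruling is used decisively. Writing $\kk(S)=\kk(W)(t)$ and $\kk(X)=\kk(W)(t)(\sqrt F)$, I would feed an arbitrary unramified class $A\in\Br X[2]$ into the Faddeev exact sequence for the rational function field $\kk(W)(t)$, so that $A$ is pinned down, modulo $\Br\kk(W)$, by its residues at the closed points of $\PP^1_{\kk(W)}$, that is along the horizontal divisors of $S$. Unramifiedness of $A$ on $X$ constrains these residues to be supported on the branch locus and to be compatible with the quadratic extension $\kk(X)/\kk(S)$; a restriction--corestriction argument then matches the residue profile of $A$ with that of $\gamma'(b)$ for a suitable $b\in\kk(B)^\times$, and step (0) forces $b\in\frakL_{c,\calE}$. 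The contribution of $\Br\kk(W)$ is harmless because $\Br\kk(W)=0$ by Tsen's theorem, $\kk(W)$ being the function field of a curve over a separably closed field.

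Finally, (1) and (3) are cleanest when read off the Kummer sequence. The inclusion $\im(x-\alpha)\subseteq\ker\gamma$ says that classes coming from cycle classes of divisors die in $\Br X$, which is just exactness of the Kummer sequence at $H^2(X,\mu_2)$: concretely $x-\alpha$ is the restriction-to-$B$ of divisor classes, and $\gamma\circ(x-\alpha)$ factors as $\NS X/2\xrightarrow{\delta}H^2(X,\mu_2)\to\Br X[2]$. For the reverse inclusion I would use step (0) to identify $\frakL_{c,\calE}$ with $H^2(X,\mu_2)$ (or the relevant part of it) and then match the cycle class map $\delta$ with the explicit homomorphism $x-\alpha$ by intersecting divisor classes against $R$ and the exceptional curves. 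I expect the surjectivity of (2) and this final identification of $\delta$ with $x-\alpha$ to be the principal obstacles: the former because one must show every unramified class is captured by the symbol construction $\gamma'$, and the latter because computing $\delta$ explicitly demands firm control of $\NS X$, which is complicated by the exceptional divisors arising in the resolution of the simple singularities of $B$.
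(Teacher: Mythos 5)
Your outline circles the right tools (purity, residues along all prime divisors, Tsen, $2$-divisibility of $\Pic^0 X$), but it misses the two ideas that actually carry the paper's proof, and one of your steps would not go through as described. First, the paper does not treat all divisors of $X$ on an equal footing: it uses the filtration $\Br X \subseteq \Br C \subseteq \Br\kk(X)$, quotes the presentation of $\Br C[2]$ for the generic fiber $C \to \PP^1_K$ from \cite{CV-BrauerCurves} (Theorem~\ref{thm:EvenHypThm}) as a black box --- this already gives surjectivity of $\gamma$ onto $\Br C[2]$ \emph{and} the exactness at $\frakL_c$ --- and so the only new work is at the vertical divisors. Your plan instead re-derives the horizontal analysis via Faddeev over $\kk(W)(t)$ (note $A$ lives over the quadratic extension $\kk(X)$, so Faddeev does not apply to it directly), and then proposes to get exactness at $\frakL_{c,\calE}$ by identifying $\frakL_{c,\calE}$ with $H^2(X,\mu_2)$ and matching the cycle class map with $x-\alpha$. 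That identification is false: $\NS X/2\NS X \to \frakL_{c,\calE}$ has a large kernel (vertical classes, classes supported over $\infty$), so $\frakL_{c,\calE}$ is an extension of $\Br X[2]$ by $\im(x-\alpha)$, not by $\NS X/2\NS X$; moreover computing $H^2(X,\mu_2)$ is exactly the input (Betti numbers) the paper is designed to avoid. Exactness comes for free from the generic-fiber theorem once one knows $\gamma^{-1}(\Br X[2]) = \frakL_{c,\calE}$.

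Second, you have misread the role of the subscript $\calE$, and this hides the two genuinely hard steps. The conditions defining $\frakL_{c,\calE}$ are divisor conditions on $B$ (membership of $\nu_*\divv(\ell)$ in the span of $\frakS\cap B^0$ and a fiber $B^0_{w_1}$ modulo $2\Div(B^0)$) that control the residues only at the \emph{non-exceptional} vertical curves; translating ``unramified at every fiber'' (a fiber-by-fiber condition on $v_b(\ell)$ and $e(b/w)$, Proposition~\ref{prop:UnramifiedConditions}, whose proof already needs a Noether--Enriques local trivialization of the ruling) into this single global condition is the content of the proof of Theorem~\ref{thm:BrX2presentation}, and it uses the $2$-divisibility of $\Jac(W)$ and pushforward to $W$ in an essential way. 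The residues at the exceptional $(-2)$-curves over the simple singularities are \emph{not} imposed in the definition of $\frakL_{c,\calE}$ and are not computed directly at all: they are shown to vanish automatically by an induction over the tree of $(-2)$-curves using the Bloch--Ogus complex at codimension-two points together with rationality of each component (Proposition~\ref{prop:-2curves}). Without that argument your step (0) only lands in $\Br(X\setminus\calE)[2]$, and the theorem as stated is out of reach.
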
		
		\noindent(See also Theorems~\ref{thm:BrX2presentation} and~\ref{thm:BrauerEnriques} and Corollaries~\ref{cor:BrX2presentation} and~\ref{cor:DoubleCoverOfP2} for related results.) 

		This result enables a study of the Galois action on $\Br X[2]$, and, in many cases, computation of $\Br X_\circ[2]$. Consequently, we expect it to have important arithmetic applications. More precisely, if $k_\circ$ is a global field, then, as Manin~\cite{Manin-BMobs} observed, elements of the Brauer group can obstruct the existence of $k_\circ$-points, even when there is no local obstruction.  Computation of such an obstruction requires explicit representations of the elements of $\Br X_\circ$; knowledge of the group structure alone does not suffice.
		
		Our method allows us to write down an unramified central simple algebra representing the nontrivial Brauer class on any Enriques surface, which we then use to give a numerical example of an Enriques surface with a transcendental Brauer-Manin obstruction to weak approximation.  The example also demonstrates how our method can be used to obtain Brauer class representatives defined over a global field.
		
		As a further application we show that the presentation can be used to determine the size of $\Br X[2]$ without the aid of the exponential sequence or knowledge of the Betti numbers. For example, a double cover of a quadric surface branched along a $(4,4)$ curve is a K3 surface for which we recover the well known fact that $\Br X[2]$ has $\F_2$-dimension $22 - \textup{rank}\NS X$.
		
	%%%%%%%%%%%%%%%%%%%%%%%%%%%%%%%%%%%%%%%%%%%%%%%%%%%%%%%%%%%%%%%%%%%%%%%%%%%%
	\subsection{Discussion}	%%%%%%%%%%%%%%%%%%%%%%%%%%%%%%%%%%%%%%%%%%%%%%%%%%%%
	%%%%%%%%%%%%%%%%%%%%%%%%%%%%%%%%%%%%%%%%%%%%%%%%%%%%%%%%%%%%%%%%%%%%%%%%%%%%
		The key feature enabling our results is the fibration on $X_\circ$ induced by the ruled surface, a fibration whose generic fiber is a double cover $C_\circ \to \PP^1$. To understand the relevance of this, recall that the Brauer group of $X_\circ$ admits a filtration, $\Br_0 X_\circ \subseteq \Br_1 X_\circ \subseteq \Br X_\circ$, where $\Br_0X_\circ := \im\left(\Br k_\circ\to\Br X_\circ\right)$ is the subgroup of \defi{constant Brauer classes} and $\Br_1X_\circ := \ker\left(\Br X_\circ \to \Br X\right)$ is the subgroup of \defi{algebraic Brauer classes}. Using the Hochschild-Serre spectral sequence, the algebraic classes can be understood in terms of the Galois action on the Picard group of $X$. In contrast, computation of \defi{transcendental Brauer classes}, i.e. those surviving in the quotient $\Br X_\circ/\Br_1X_\circ$, is usually much more difficult, with only a handful of articles addressing the problem~\cites{Harari-transcendental, Wittenberg-transcendental, SSD-2descent, HS-Enriques, Ieronymou-transcendental, KT-effectivity, HVAV-K3, SkorobogatovZarhin, HVA-K3Hasse,Preu-transcendental}.

		In the context of a fibration as above, the purity theorem~\cite[Thm. 6.1]{Grothendieck-BrauerIII}\Editnote{Comment 9} gives a filtration
		\[
			\Br X \subseteq \Br C \subseteq \Br \kk(C) = \Br \kk(X)\,,
		\]
		identifying $\Br C \subseteq \Br \kk(X)$ as the subgroup unramified at all horizontal divisors, and $\Br X \subseteq \Br C$ as the subgroup unramified at all vertical divisors. Moreover, the Brauer group of $C$ is algebraic (over the function field of the base curve) by Tsen's theorem, and so may be studied in terms of the Galois action on the Picard group.  In~\cite{CV-BrauerCurves} this fact is utilized to obtain a presentation of $\Br C[2]$, which is induced by the map $\gamma':\kk(B) \to \Br\kk(X)$ in Theorem I. The task of the present paper is, thus, to determine which functions in $\kk(B)$ give rise to central simple algebras that are unramified at the vertical divisors.
		
		That a fibration can be used in this way to compute Brauer classes on a surface is not new, but there are few classes of surfaces for which the method has been carried out in practice. Our work builds on that of Wittenberg~\cite{Wittenberg-transcendental} and Ieronymou~\cite{Ieronymou-transcendental} who each give an example of a nontrivial transcendental $2$-torsion Brauer class on a specific elliptic K3 surface. The surfaces they consider admit a genus one fibration such that the Jacobian fibration has full rational $2$-torsion and such that the generic fiber is a double cover of $\PP^1$. We apply the results of~\cite{CV-BrauerCurves} to formalize and generalize the technique to deal with fibrations of curves of arbitrary genus that are double covers of $\PP^1$ and remove all assumptions on the Jacobian fibration.

	\subsection{Outline}
		The notation used throughout the paper is defined in \S\ref{sec:Notation}, where the relevant results from~\cite{CV-BrauerCurves} (including the definition of the maps $\gamma$ and $x-\alpha$ appearing in Theorem I) are also recalled. Following this in~\S\ref{sec:UnramConditions}, we review the purity theorem and set about computing the residues of classes in the image of $\gamma$ at the vertical divisors. In \S\ref{sec:BrX2presentation} we identify a list \edited{of}\Editnote{Comment 1} functions in $\kk(B)^\times$ whose images under $\gamma'$ are unramified, and then prove a more general version of Theorem I. In~\S\ref{sec:sizeBr}, we use the aforementioned list of functions to determine the $\F_2$-dimension of $\Br X[2]$ under some mild hypotheses. Next in~\S\ref{sec:Enriques}, we turn our attention to Enriques surfaces and show how to construct the nontrivial Brauer class on any Enriques surface.  This demonstrates that our method is effective even when $S$ fails to be geometrically ruled.  We demonstrate an arithmetic application in~\S\ref{sec:BMobs} by constructing an explicit Enriques surfaces with a transcendental Brauer-Manin obstruction to weak approximation
	
	%%%%%%%%%%%%%%%%%%%%%%%%%%%%%%%%%%%%%%%%%%%%%%%%%%%%%%%%%%%%%%%%%%%%%%%%%%%%
	\section{Notation and Background}\label{sec:Notation}%%%%%%%%%%%%%%%%%%%%%%%
	%%%%%%%%%%%%%%%%%%%%%%%%%%%%%%%%%%%%%%%%%%%%%%%%%%%%%%%%%%%%%%%%%%%%%%%%%%%%
		
		%%%%%%%%%%%%%%%%%%%%%%%%%%%%%%%%%%%%%%%%%%%%%%%%%%%%%%%%%%%%%%%%%%%%%%%%
		\subsection{General Notation}%%%%%%%%%%%%%%%%%%%%%%%%%%%%%%%%%%%%%%%%%%%
		%%%%%%%%%%%%%%%%%%%%%%%%%%%%%%%%%%%%%%%%%%%%%%%%%%%%%%%%%%%%%%%%%%%%%%%%
			Throughout the paper $k_\circ$ denotes a field of characteristic different from $2$ with separable closure $k$. If $Y_\circ$ is a $k_\circ$-scheme, we write $Y$ for the base change of $Y_\circ$ to $k$. In general, if $\F$ is any field and $Y$ and $S$ are $\F$-schemes, we set $Y_S := Y \times_{\Spec \F} S$. We also define $Y_A := Y_{\Spec A}$, for an $\F$-algebra $A$. If $Y$ is an integral $\F$-scheme, $\kk(Y)$ denotes its function field. More generally, if $Y$ is a finite union of integral $\F$-schemes $Y_i$, then $\kk(Y) := \prod \kk(Y_i)$ is the ring of global sections of the sheaf of total quotient rings. In particular, if $A \simeq \prod \F_j$ is an \'etale $\F$-algebra, then $Y_A$ is a union of integral $\F$-schemes and $\kk(Y_A) \simeq \prod\kk(Y_{\F_j})$. For $r \ge 0$ we use $Y^{(r)}$ to denote the set of codimension $r$ points on $Y$. 			
			
			For any projective variety $Y$ over $\F$, we write $\Br Y$ for the \'etale cohomology group $\Br Y := \HH^2_\textup{\'et}(Y,\G_m)$. If $A$ is an \'etale $\F$-algebra, then we also write $\Br A$ for $\Br (\Spec A)$. Given invertible elements $a$ and $b$ in an \'etale $\F$-algebra $A$, \edited{and assuming the characteristic of $\F$ is not $2$,}\Editnote{Comment 2
} we define the quaternion algebra, 
			\[
				(a,b)_2 := A[i,j]/\langle i^2 = a,\, j^2 = b,\, ij = -ji\rangle\,
			\]
			which we often conflate with its class in $\Br A$.
			
			\edited{We let $\Div Y$ denote the group of Weil divisors of $Y$ defined over $\edited{\F}$. If $Y$ is smooth and geometrically irreducible, we} let $\Pic Y$ be the Picard group of $Y$ and $\Pic_Y$ for its Picard scheme. Then $\Pic Y = \Div Y/\Princ Y$, where  $\Princ Y$ is the group of  principal divisors of $Y$ defined over $\edited{\F}$\Editnote{Comment 1}. 
            %where $\Div Y$ (resp. $\Princ Y$) is the group of divisors (resp. principal divisors) of $Y$ defined over $\edited{\F}$\Editnote{Comment 1: fixed}. 
            If $D \in \Div Y$, then $[D]$ denotes its class in $\Pic Y$. If $\Br \F = 0$, then in addition $\Pic_Y (\F) = \Pic Y$. However, for general fields, the map $\Pic Y \to \Pic_Y(\F)$ is not necessarily surjective. Let $\Pic^0_Y \subseteq \Pic_Y$ denote the connected component of the identity, and use $\Pic^0Y$ to denote the subgroup of $\Pic Y$ mapping into $\Pic^0_Y(\F)$. Then $\NS Y := \Pic Y/\Pic^0 Y$ is the {N\'eron-Severi group} of $X$. If $\lambda \in \NS Y_{\F^\textup{sep}}$, let $\Pic^\lambda_Y$ denote the corresponding component of the Picard scheme and use $\Pic^\lambda Y$ and $\Div^\lambda Y$ to denote the subsets of $\Pic Y$ and $\Div Y$ mapping into $\Pic^\lambda_Y(\F)$. When $Y$ is a curve $\NS Y_{\F^\textup{sep}} = \Z$ and $\Jac(Y) := \Pic^0_Y$ is called the Jacobian of $Y$. It is an abelian variety of dimension $g(Y)$, where $g(Y)$ denotes the genus of $Y$. If $Y$ is a \edited{disjoint} finite\Editnote{Comment 3} union of integral curves $Y_i$ we define %$\Div(Y) := \prod \Div(Y_i)$, 
            $\Jac(Y) := \prod \Jac(Y_i)$ and $g(Y) := \sum g(Y_i) + 1 - h^0(Y)$, where $h^0(Y)$ is the number of connected components of $Y$.

		%%%%%%%%%%%%%%%%%%%%%%%%%%%%%%%%%%%%%%%%%%%%%%%%%%%%%%%%%%%%%%%%%%%%%%%%
		\subsection{Double covers of ruled surfaces}%%%%%%%%%%%%%%%%%%%%%%%%%%%%
		\label{subsec:SurfacesNotation}%%%%%%%%%%%%%%%%%%%%%%%%%%%%%%%%%%%%%%%%%
		%%%%%%%%%%%%%%%%%%%%%%%%%%%%%%%%%%%%%%%%%%%%%%%%%%%%%%%%%%%%%%%%%%%%%%%%
			
			Let $\varpi\colon S\to W$ be a \edited{smooth projective }ruled surface over $k$, i.e., a \edited{proper} fibration over a smooth irreducible \edited{projective} curve $W$ which is defined over $k$ and such that the generic fiber of $\varpi$ is isomorphic to $\PP^1_K$, where $K := \kk(W)$.\Editnote{Comment 4} \edited{We fix an isomorphism $S_K\stackrel{\sim}{\to} \PP^1_K$ and } write $\mathfrak{S}$ for the flat closure of $\infty \in \PP^1_K$ in $S$.  We say that $S$ is \defi{geometrically ruled} if \emph{every} fiber of $\varpi$ is isomorphic to $\PP^1$. 
			
			Let $\pi\colon X^0\to S$ be a double cover defined over $k$, and let $X$ be the desingularization of $X^0$ that is obtained by a \defi{canonical resolution} (see~\cite[\S III.7]{BHPvdV} for the definition). The composition $X\to S\to W$ endows $X$ with a fibration whose generic fiber $C$ is a double cover of $\PP^1_K$. 			
			
			Let $B^0 \edited{\subset S}$ denote the union of the {connected} components of the branch locus of $X^0\to S$ that map dominantly to $W$. We will assume that $B^0$ is nonempty, which is equivalent to assuming \edited{that the ramification locus of $C\to\PP^1$ is nonempty, which in particular implies that $X$ is geometrically irreducible.  (If the ramification locus of $C\to \PP^1$ is empty, then any irreducible component of $X$ is a ruled surface and so has trivial Brauer group.)}\Editnote{Comment 6}           
            %that $X$  is geometrically irreducible. (If $X$ is geometrically reducible, then it is isomorphic to two copies of $S$.  Since $\Br S = 0$, we are not interested in this case.
				
			The restriction of the map $\varpi\colon S \to W$ to ${B^0}$ may not be flat as ${B^0}$ may have vertical \edited{irreducible}\Editnote{Comment 7} components.  Let $B^{0,\textup{fl}}\subseteq {B^0}$ be the maximal subvariety such that the map $B^{0,\textup{fl}}\to W$ is flat, i.e., $B^{0,\textup{fl}}$ is the union of all \emph{irreducible} components of \edited{$B^0$}\Editnote{Comment 8} that map dominantly to $W$. We write ${B}$ for the normalization of ${B^0}$ in $X$, and write $B^{\textup{fl}}$ for the normalization of $B^{0,\textup{fl}}$ in $X$.  We set $L := \kk(B^\textup{fl})$ and note that $\kk(B) = L\times k(x)^n$, where $n$ equals the number of vertical \edited{irreducible} components of $B^0$.  We denote the normalization map $B \to B^0$ by $\nu$ and sometimes conflate $\nu$ with $\nu|_{\Bfl}$.
			
			Since $X$ was obtained by a canonical resolution, the curves $B$ and $B^{\textup{fl}}$ are smooth~\cite[\S III.7]{BHPvdV}. In particular, $B$ and $B^{\textup{fl}}$ are each a disjoint union of integral curves. 
				
			If $b\in \BOfl$ is a point lying over $w\in W$ and $\ell\in L^{\times}$, we define
			\[
				v_b(\ell) := \sum_{b'\in \Bfl, b'\mapsto b} v_{b'}(\ell), \quad
				e(b/w) := \sum_{b'\in \Bfl, b'\mapsto b} e(b'/w),
			\]
			where $e(b'/w)$ denotes the ramification index of the map $\Bfl \to W$ at $b'$ and $v_{b'}(\ell)$ is the valuation of $\ell$ at $b'.$

            \edited{
            We will consider \defi{the dual graph} $\Gamma$ of $B^0$ defined as follows. For every singular point $b\in B^0$, fix an ordering of the preimages $b_0', \ldots, b_s'\in B$ of $b$.  We define the vertices of $\Gamma$ to be in one-to-one correspondence with the irreducible components of $B$, and define the edges of $\Gamma$ by the following rule: for every singular point $b \in B^0$ and every $1\leq i\leq s$, there is an edge $e_{b,i}$ joining the vertices corresponding to the irreducible components containing $b_{i-1}'$ and $b_i'$. 

        	\begin{remark}
        		Strictly speaking it is not correct to refer to {\em the} dual graph of $B$, since $\Gamma$ depends on the ordering chosen above. However, its homology with coefficients in $\Z/2\Z$ does not; as this is all we are really concerned with below, we will allow ourselves this abuse of language.
        	\end{remark}

        	We will work with the simplicial homology with coefficients in $\Z/2\Z$ of the dual graph $\Gamma$. We use $C_i(\Gamma)$, $Z_i(\Gamma)$ and $H_i(\Gamma)$ to denote, respectively, the groups of $i$-chains, $i$-cycles and $i$-homology classes for $i = 0,1$.
            }\Editnote{This passage previously appeared (with minor alterations) in Section 4.1.}

			Since the branch locus of $\pi$ is generically smooth and $S$ is smooth, $X^0$ is regular in codimension $1$.  Let $\calE$ be the set of curves on $X$ that are either contracted to a point in $X^0$, or lie over some $w\in W$ such that $S_w$ is singular. Since $X^0$ is regular in codimension $1$, the morphism $X\to X^0$ is an isomorphism away from $\calE$.  We say that an irreducible curve $F$ on $X$ is \defi{exceptional} if $F\in\calE$ and \defi{non-exceptional} otherwise.  If $F$ is a curve on $X^0$, we will often abuse notation and let $\partial_F$ denote the residue map at the strict transform of $F$ on $X$.
			\begin{remark}\label{rmk:exceptional}
				There are some curves in $\calE$ which are not ``exceptional'' in the usual sense, i.e., are not the exceptional divisor of some blow-up.  Such curves are all components of $X^0_w$ for some $w\in W$ such that $S_w$ is not smooth.  In particular, if $S$ is geometrically ruled, %i.e., \emph{every} fiber is isomophic to $\PP^1$, 
                then every curve in $\calE$ is the exceptional divisor of some blow-up.
			\end{remark}
		
			\begin{remark}
				Many of the arguments are simpler and more intuitive when $B^0 = \BOfl$, and even more so under the additional assumption that ${B^0}$ is smooth and irreducible. As many of the results are of equal interest in these cases, the reader may wish to make these assumptions on a first reading.
			\end{remark}
			
			\begin{remark}\label{rem:mildersing}
				Any double cover of a ruled surface is in fact birational to a double cover of a \emph{geometrically} ruled surface of the form $\PP^1\times W$. However, allowing for a more general ruled surface enables us to choose a model where the branch locus has milder singularities. This will be used when we consider the Brauer group of an Enriques surface in \S\ref{sec:Enriques}.
			\end{remark}

		%%%%%%%%%%%%%%%%%%%%%%%%%%%%%%%%%%%%%%%%%%%%%%%%%%%%%%%%%%%%%%%%%%%%%%%%
		\subsection{The $2$-torsion Brauer classes on a double cover of the %%%%
        projective line}\label{subsec:curves}%%%%%%%%%%%%%%%%%%%%%%%%%%%%%%%%%%%
		%%%%%%%%%%%%%%%%%%%%%%%%%%%%%%%%%%%%%%%%%%%%%%%%%%%%%%%%%%%%%%%%%%%%%%%%
			Recall that the generic fiber of the composition $X \to S \to W$ is a double cover $C \to \PP^1_K$.  As mentioned in the introduction, we rely on the explicit presentation for $\Br C[2]$ given in~\cite{CV-BrauerCurves}.  For the reader's convenience, the relevant results and notation are recalled here.
			
			\edited{Let $\Omega \subset C$ denote the ramification locus of $C \to \PP^1_K$. We shall assume that $\Omega$ does not contain any point above $\infty\in \PP^1_K$. Since $K$ is infinite, this can always be arranged by changing coordinates on $\PP^1_K$.}\Editnote{Comment 5}  Thus, we may fix a model for $C$ of the form $y^2 = cf(x)$, where $c\in K^\times$ and $f$ is squarefree and monic of degree $2g(C)+2$.
		
			Note that $L := \kk(\Bfl)$ can be identified with $K[\theta]/f(\theta)$. Let $x - \alpha$ denote the image of $x - \theta$ in $\kk(C_L) := L \otimes_K \kk(C)$. As shown in~\cite{PS-descent} the map sending a closed point $P\in C\setminus \left(\Omega\cup \pi^{-1}(\infty)\right)$ with $P(\overline{K}) = \{(x_1, y_1), \ldots, (x_d,y_d)\}$ to $x(P) - \alpha := \prod_{i=1}^{d} (x_i - \alpha)\in L^{\times}$ induces a homomorphism 
			\[
				x-\alpha\,:\,\Pic C \to L^\times/K^\times L^{\times 2}\,.
			\]
						
			Set $\frakL = L^{\times}/K^{\times}L^{\times2}.$  For $a \in K^{\times}$ and $\ell \in L^{\times}$, we denote their corresponding classes in $K^{\times}/K^{\times2}$ and $\frakL$ by $\overline{a}$, $\overline{\ell}$ respectively.  We define
			\[
				\frakL_a := \left\{\overline{\ell} \in \frakL : \Norm_{L/K}(\overline{\ell}) \edited{\in} \langle \overline{a}\rangle \right\}.
			\]
	
			Consider the map
			\[
				\gamma' \colon L^{\times} \to \Br \kk(C), \quad
				\ell\mapsto \Cor_{\kk(C_L)/\kk(C)}\left( (\ell, x - \alpha)_2 \right).
			\]
			
			\begin{theorem}[{\cite[Theorems 1.1-4 and Proposition 4.7]{CV-BrauerCurves}}]
				\label{thm:gammawelldefined}%\Bianca{review citation before final publication}
				\label{thm:EvenHypThm}
				Let $\ell \in L^\times$. Suppose $k$ is a separable closure of a field $k_\circ$ over which $\pi,\varpi,c$ and $f(x)$ are all defined. Then $\gamma'(\ell) \in \Br C$ if and only if $\overline{\ell} \in \frakL_c$. Moreover, $\gamma'$ induces an exact sequence of $\Gal(k/k_\circ)$-modules,
				\[
					\frac{\Pic C}{2\Pic C} \stackrel{x-\alpha}\To \frak{L}_c \stackrel{\gamma}\To \Br C[2] \To 0\,,
				\]
				where the kernel of $x-\alpha$ is generated by divisors lying over $\infty \in \PP^1_K$.
			\end{theorem}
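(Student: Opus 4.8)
The plan is to compute $\Br C[2]$ by \'etale cohomology with $\mu_2$-coefficients, exploiting that the base field $K = \kk(W)$ is the function field of a curve over the separably closed field $k$, so that $\Br K = 0$ and $\cd_2 K \le 1$ by Tsen's theorem. Writing $\bar C = C\times_K K^{\mathrm{sep}}$ and $G_K = \Gal(K^{\mathrm{sep}}/K)$, the Kummer sequence on $C$ gives
\[
0 \To \Pic C/2\Pic C \xrightarrow{\ \kappa\ } \HH^2_{\textup{\'et}}(C,\mu_2) \To \Br C[2] \To 0,
\]
so it suffices to describe $\HH^2_{\textup{\'et}}(C,\mu_2)$ and the image of $\Pic C/2$ inside it. Running the Hochschild--Serre spectral sequence $\HH^p(G_K, \HH^q(\bar C,\mu_2)) \Rightarrow \HH^{p+q}(C,\mu_2)$ with the inputs $\HH^0(\bar C,\mu_2) = \mu_2$, $\HH^1(\bar C,\mu_2) = J[2]$, $\HH^2(\bar C,\mu_2) = \Z/2$, and using $\Br K[2] = 0$ together with the vanishing of $\HH^{\ge 2}(G_K,-)$ on $2$-torsion, collapses it to an extension
\[
0 \To \HH^1(G_K, J[2]) \To \HH^2_{\textup{\'et}}(C,\mu_2) \To \Z/2 \To 0,
\]
in which the quotient $\Z/2$ is detected by the degree and $J = \Jac(C)$.

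Next I would make $\HH^1(G_K, J[2])$ explicit following the method of Poonen--Schaefer. Writing $W$ for the set of $2g+2$ roots of $f$, there is an isomorphism of $G_K$-modules $J[2] \cong (\F_2[W])^0/\langle\mathbf 1\rangle$, where $(\F_2[W])^0$ is the augmentation submodule of the permutation module $\F_2[W] = \Res_{L/K}\F_2$. Shapiro's lemma identifies $\HH^1(K, \Res_{L/K}\F_2)$ with $L^\times/L^{\times 2}$, and chasing the two short exact sequences $0\to(\F_2[W])^0\to\F_2[W]\xrightarrow{\Sigma}\F_2\to 0$ and $0\to\langle\mathbf 1\rangle\to(\F_2[W])^0\to J[2]\to 0$ realizes $\HH^1(K,J[2])$ as the subgroup $\{\bar\ell\in\frakL : \Norm_{L/K}(\bar\ell)\in\langle\bar c\rangle\} = \frakL_c$ of $\frakL = L^\times/K^\times L^{\times 2}$, with the Kummer map $J(K)/2J(K)\to\HH^1(K,J[2])$ becoming the Poonen--Schaefer map $P\mapsto\prod_i(x_i-\theta)$, that is, $x-\alpha$. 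Matching the two displayed sequences then yields the asserted sequence with $\Pic C/2\Pic C$ on the left: the composite $\Pic C/2 \xrightarrow{\kappa} \HH^2_{\textup{\'et}}(C,\mu_2)\to\Z/2$ is the degree, its kernel maps to $\frakL_c$ by $x-\alpha$, and since $x^\ast(\mathrm{pt}) = \infty^+ + \infty^- \sim 2W_i$ maps to a square, every class of a divisor over $\infty$ lies in $\ker(x-\alpha)$; as $x-\alpha$ is injective on $J(K)/2J(K)$, these classes generate the kernel exactly and leave the image and cokernel of $x-\alpha$ unchanged.

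To pin down $\gamma$ and prove the ramification criterion simultaneously, I would compute residues of $\gamma'(\ell) = \Cor_{\kk(C_L)/\kk(C)}(\ell, x-\alpha)_2$ directly. As $\ell\in L^\times$ is a constant in $\kk(C_L)$ it has valuation $0$ everywhere, so $\partial_Q(\ell,x-\alpha)_2 = \bar\ell^{\,v_Q(x-\alpha)}$; on each component $C_{L_i}$ the function $x-\theta_i$ has a double zero at the Weierstrass point (where the residue is $\bar\ell_i^{\,2} = 1$), simple poles at the two points over $\infty$, and no other zeros or poles. Hence $(\ell,x-\alpha)_2$ is unramified away from $\infty$, and by the compatibility $\partial_P\circ\Cor = \sum_{Q\mid P}\Cor_{\kappa(Q)/\kappa(P)}\circ\partial_Q$ together with the fact that $\Cor$ is the field norm on $\HH^1(-,\mu_2)$, the residue of $\gamma'(\ell)$ at the point(s) over $\infty$ is the class of $\Norm_{L/K}(\ell)$ in $\kappa(\infty)^\times/\kappa(\infty)^{\times 2}$. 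Since $\kappa(\infty) = K(\sqrt c)$ precisely when $c\notin K^{\times 2}$, this residue vanishes if and only if $\Norm_{L/K}(\bar\ell)\in\langle\bar c\rangle$, i.e. $\bar\ell\in\frakL_c$; this proves $\gamma'(\ell)\in\Br C \iff \bar\ell\in\frakL_c$.

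The step I expect to be the main obstacle is showing that the explicit map $\gamma$ agrees with the edge homomorphism $\frakL_c = \HH^1(K,J[2]) \hookrightarrow \HH^2_{\textup{\'et}}(C,\mu_2)\twoheadrightarrow\Br C[2]$ coming from the spectral sequence, so that the cohomological exact sequence is genuinely the one carried by $\gamma$. This requires reconciling the abstract coboundary with the concrete cup-product-and-corestriction formula for $(\ell, x-\alpha)_2$; I would attempt it either by an explicit cocycle computation of the edge map or by using the residue computation above to match both maps inside $\Br\kk(C)[2]$, and then invoke injectivity of $x-\alpha$ on $J(K)/2J(K)$ to deduce exactness at $\frakL_c$ and surjectivity onto $\Br C[2]$. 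Finally, $\Gal(k/k_\circ)$-equivariance is automatic, since $f$, $c$, and all the maps involved are defined over $k_\circ$ and functorial in the coefficient field.
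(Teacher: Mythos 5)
You should know at the outset that this paper contains no proof of this theorem: it is imported verbatim from the companion paper \cite{CV-BrauerCurves} (Theorems 1.1--1.4 and Proposition 4.7 there), so the only meaningful comparison is with that reference. Your overall strategy --- Tsen's theorem to force $\Br K=0$ and $\cd_2K\le 1$, a two-step cohomological d\'evissage of $\Br C[2]$, the Poonen--Schaefer description of $\HH^1(K,\Jac(C)[2])$ via the permutation module on the ramification points, and a direct residue computation --- is the same circle of ideas used there. Your third paragraph is essentially a complete and correct proof of the first assertion: $(\ell,x-\alpha)_2$ is unramified away from $\infty$ because $\ell$ is constant on $C_L$ and $\divv(x-\alpha)=2(\alpha,0)-\pi^{-1}(\infty)$, and the residue of $\gamma'(\ell)$ over $\infty$ is the class of $\Norm_{L/K}(\ell)$ in $\kk(\infty)^\times/\kk(\infty)^{\times2}$ with $\kk(\infty)=K(\sqrt c)$, which vanishes exactly when $\Norm_{L/K}(\ell)\in K^{\times2}\cup cK^{\times2}$. (A small warning: you reuse the letter $W$ for the set of roots of $f$, which in this paper denotes the base curve; the paper's notation for the ramification locus is $\Omega$.)

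The exactness part, however, has a genuine gap beyond the one you flag. The asserted isomorphism $\HH^1(G_K,\Jac(C)[2])\cong\frakL_c$ is false, and the discrepancy is precisely where the content of the theorem lives. Chasing your two short exact sequences of $G_K$-modules produces a comparison map to $\frakL_1=\ker\bigl(\Norm_{L/K}:\frakL\to K^\times/K^{\times2}\bigr)$, not to $\frakL_c$, and that map is neither injective nor surjective in general: there is a correction $\Z/2$ when $f$ has no odd-degree factor (from the connecting map $\HH^0(K,\F_2)\to\HH^1(K,(\F_2[\Omega])^0)$) and a correction $(K^\times\cap L^{\times2})/K^{\times2}$ from dividing by the image of $\HH^1(K,\F_2)$ --- these are visible in the formula of Lemma~\ref{lem:dimImage} of this paper. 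The enlargement from $\frakL_1$ to $\frakL_c$ (of index at most $2$, realized by a degree-one point, whose image has norm $f(x(P))\equiv c$) is what matches the quotient $\HH^0(K,\HH^2(\Cbar,\mu_2))=\Z/2$ in your other sequence, so it cannot also be absorbed into $\HH^1(K,\Jac(C)[2])$. A dimension count makes the inconsistency concrete: if $\HH^1(K,\Jac(C)[2])$ were literally $\frakL_c$, splicing your two sequences would force $x-\alpha$ to be injective on $\Pic C/2\Pic C$, whereas the theorem asserts a kernel generated by divisors over $\infty$, and this kernel is genuinely nontrivial in general (e.g.\ $\mathfrak m:=\pi^*\infty$ lies in $2\Pic C$ only when $f$ has a factor of odd degree, a fact quoted in the proof of Lemma~\ref{lem:dimImage}). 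In particular your parenthetical ``$x-\alpha$ is injective on $J(K)/2J(K)$'' is false --- its kernel is generated by the class of $\infty^+-\infty^-$ when $c\in K^{\times2}$ --- and contradicts the statement you are proving. Repairing the argument requires tracking all of these correction terms and showing they are exactly accounted for by the divisors over $\infty$ and by $\frakL_c/\frakL_1$; that bookkeeping, together with the identification of the explicit map $\gamma$ with the cohomological edge map (which you rightly single out as the main remaining obstacle), is the bulk of the work in \cite{CV-BrauerCurves}.
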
	
			
			% \brendan{mention composition $\NS X \To \frac{\Pic C}{\im \Pic^0 X} \To \frac{\Pic C}{2\Pic C} \stackrel{x-\alpha}\To \frak{L}$?}
	%%%%%%%%%%%%%%%%%%%%%%%%%%%%%%%%%%%%%%%%%%%%%%%%%%%%%%%%%%%%%%%%%%%%%%%%%%%%
	\section{Residues and Purity}\label{sec:UnramConditions}%%%%%%%%%%%%%%%%%%%%
    %%%%%%%%%%%%%%%%%%%%%%%%%%%%%%%%%%%%%%%%%%%%%%%%%%%%%%%%%%%%%%%%%%%%%%%%%%%%

		By the purity theorem we have that for any smooth projective variety $Y$ over a field of characteristic different from a prime $p$, 
		\begin{equation}
			\label{eq:purity}
			\Br Y[p] = \bigcap_{y\in Y^{(1)}} \ker\left(\Br \kk(Y)[p] \stackrel{\partial_y}{\To} \HH^1(\kk(y), \Z/p\Z)\right),
		\end{equation}
		where $\partial_y$ denotes the residue map associated to $y \in Y^{(1)}$ (see~\cite[Thm. 6.1]{Grothendieck-BrauerIII}). For quaternion algebras, the residue map $\partial_y$ can be described explicitly (see~\cite{GS-csa}*{Example 7.1.5}).  For any $a,b\in \kk(Y)^{\times}$, we have
		\[
			\partial_y\left((a,b)_2\right) = (-1)^{v_y(a)v_y(b)}a^{v_y(b)}b^{-v_y(a)} \in \kk(y)^{\times}/\kk(y)^{\times2} \isom \HH^1\left(\kk(y), \Z/2\right),
		\]
		where $v_y$ denotes the valuation corresponding to $y$.
		
		Identifying $\kk(C) \simeq \kk(X)$ we have
		\begin{align*}
			\Br C [2] &= \bigcap_{\substack{\sigma \in X^{(1)}\\ \sigma \textup{ horizontal}}}\ker\left(\Br\kk(X)[2] \stackrel{\partial_\sigma}\To \kk(\sigma)^\times/\kk(\sigma)^{\times 2}\right) \,,\\
		\intertext{and}
			\Br X [2] &= \bigcap_{\substack{F \in X^{(1)}\\ F \textup{ vertical}}}\ker\left(\Br C [2] \stackrel{\partial_F}\To \kk(F)^\times/\kk(F)^{\times 2}\right) \,.
		\end{align*}
		In light of this we may obtain a presentation of $\Br X[2]$ from Theorem~\ref{thm:EvenHypThm} as soon as we can determine the subgroups $\ker(\partial_F\circ \gamma)$ for $F \in X^{(1)}$ a vertical divisor.  To do so, we will often use that
			\begin{equation}\label{eq:purity-cores}
				\partial_F\left( \Cor_{\kk(C_L)/\kk(C)}\left((\ell, x - \alpha)_2\right)\right) = \prod_{F'|F} \Norm_{\kk(F')/\kk(F)} \left(\partial_{F'} \left((\ell, x - \alpha)_2\right)\right)
			\end{equation}
			for all $F\in X^{(1)}$ and all $\ell\in \edited{L^\times}$\Editnote{Comment 1}; here $F'$ runs over all discrete valuations of $\kk(C_L)$ that extend the discrete valuation corresponding to $F$ on $\kk(C) = \kk(X)$ (see~\cite[Lemma 2.1]{CV-BrauerCurves}).%\Bianca{double check reference before publication}

		%%%%%%%%%%%%%%%%%%%%%%%%%%%%%%%%%%%%%%%%%%%%%%%%%%%%%%%%%%%%%%%%%%%%%%%%
		\subsection{The non-exceptional \edited{vertical} curves}%%%%%%%%%%%%%%%
		%%%%%%%%%%%%%%%%%%%%%%%%%%%%%%%%%%%%%%%%%%%%%%%%%%%%%%%%%%%%%%%%%%%%%%%%
			\begin{prop}\label{prop:UnramifiedConditions}
				Fix $w\in W$ such that $S_w$ is smooth and fix $\ell\in L^{\times}$.\Editnote{The assumption that $\overline{\ell}\in\frakL_c$ has been removed as it was not needed in the proof.}% such that $\overline{\ell}\in\frakL_c$.
				\begin{enumerate}
					\item If $X^0_{w}$ is reduced and irreducible, then $\partial_{{X^0_{w}}}(\gamma(\overline{\ell})) \in \kk({X^0_{w}})^{\times2}$ if and only if 
					\begin{equation}\label{eq:unram-irred}
						e(b'/w)v_b(\ell)\equiv e(b/w)v_{b'}(\ell)\bmod 2\,, \quad
						\textup{ for all }b,b'\in B^0_{w} \setminus (B^0_{w} \cap \mathfrak{S}).
					\end{equation}
					\item If $X^0_{w}$ is reduced and reducible\Editnote{Comment 12}, then $\partial_{F}(\gamma(\overline{\ell})) \in \kk(F)^{\times2}$ for all irreducible components $F\subseteq X^0_{w}$ if and only if
					\begin{equation}\label{eq:unram-red}
						v_b(\ell)\equiv 0 \bmod 2\,, \quad
						\textup{ for all }b\in B^0_{w}\setminus (B^0_{w} \cap \mathfrak{S}).
					\end{equation}
					\item If $S_w \subseteq B^0$, then $\partial_{(X^0_w)_{\textup{red}}}(\gamma(\overline{\ell})) \in \kk((X^0_w)_{\textup{red}})^{\times2}$ for all $\ell\in L^{\times}$.
				\end{enumerate}
			\end{prop}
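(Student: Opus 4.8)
The plan is to compute the residue $\partial_F(\gamma(\overline{\ell}))$ explicitly at each vertical divisor $F$ over $w$ and to read off when it is a square. Throughout I would fix a uniformizer $t$ of $\calO_{W,w}$ and use the affine model $y^2 = cf(x)$, so that $S_w\cong\PP^1_k$ with coordinate $\bar x$, and write $\bar d := \bar c\,\bar f \in k(\bar x)^\times$ for the reduction. Since $k$ is separably closed $\bar c$ is a square, so $\bar d \equiv \prod_{b}(\bar x - b)^{e(b/w)}\bmod k(\bar x)^{\times2}$, the product running over the finite branch points $b \in B^0_w\setminus\frakS$. The three cases correspond to: (3) $v_w(c)$ odd, where the fibre is non-reduced, $(X^0_w)_{\textup{red}}$ has multiplicity $m_F = v_F(t) = 2$ and $\kk((X^0_w)_{\textup{red}}) = k(\bar x)$; (1) $v_w(c)$ even with $\bar d$ a non-square, where $F = X^0_w$ is irreducible, $m_F = 1$ and $\kk(F) = k(\bar x)(\sqrt{\bar d})$; (2) $v_w(c)$ even with $\bar d$ a square, where $X^0_w = F_+\sqcup F_-$ with $m_{F_\pm}=1$ and $\kk(F_\pm) = k(\bar x)$.

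First I would reduce to a local computation via~\eqref{eq:purity-cores}, $\partial_F(\gamma(\overline{\ell})) = \prod_{F'|F}\Norm_{\kk(F')/\kk(F)}\left(\partial_{F'}((\ell, x-\alpha)_2)\right)$, organizing the valuations $F'$ of $\kk(C_L)$ over $v_F$ according to the place $b'$ of $L$ over $w$ to which $v_{F'}|_L$ restricts. The essential bookkeeping is then the tower data: $v_{F'}|_L = m_{F'}\,v_{b'}$ with $m_{F'} = e(F'/F)\,m_F/e(b'/w)$, so $v_{F'}(\ell) = (e(F'/F)m_F/e(b'/w))\,v_{b'}(\ell)$; and, since each $b'$ is totally ramified over $w$ (the residue field $\kk(b')$ equals $k$), $\sum_{F'\text{ over }(F,b')} e(F'/F)f(F'/F) = [L_{b'}:K_w] = e(b'/w)$.

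Next come the local residues. For a finite branch point, i.e. $b'\notin\frakS$, the root $\alpha$ specializes to $b=\nu(b')\neq\infty$, so $x-\alpha$ is a unit along $F'$ with reduction $\bar x - b\neq0$; hence $v_{F'}(x-\alpha)=0$ and the explicit quaternion residue formula gives $\partial_{F'}((\ell,x-\alpha)_2) \equiv (\bar x - b)^{v_{F'}(\ell)}$. Taking norms (which multiply the exponent by the residue degree $f(F'/F)$, as $\bar x - b\in k(\bar x)\subseteq\kk(F)$) and multiplying over all $F'$ over $(F,b')$, the degree identity collapses the contribution of $b'$ to exactly $(\bar x - b)^{m_F v_{b'}(\ell)}$. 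For a point at infinity, $b'\in\frakS$, one has instead $v_{F'}(x-\alpha)=v_{F'}(\alpha)<0$; writing $x-\alpha=-\alpha(1-x/\alpha)$ one checks that the unit parts of both $\ell$ and $x-\alpha$ reduce into $k\subseteq\kk(F')$, so the residue is a constant in $k^\times$, hence (as $k$ is separably closed) a square, and it contributes trivially after taking norms. Summing the finite contributions via $v_b(\ell)=\sum_{b'\mapsto b}v_{b'}(\ell)$ yields $\partial_F(\gamma(\overline{\ell})) \equiv \prod_{b\in B^0_w\setminus\frakS}(\bar x - b)^{m_F v_b(\ell)} \bmod \kk(F)^{\times2}$.

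Finally I would read off each case. In case (3), $m_F=2$ makes every exponent even, so the residue is a square for all $\ell$. In case (2), $\kk(F)=k(\bar x)$ and the $\bar x - b$ are distinct linear factors, so the product is a square iff every $v_b(\ell)$ is even, which is~\eqref{eq:unram-red}. In case (1), an element of $k(\bar x)^\times$ is a square in $k(\bar x)(\sqrt{\bar d})$ iff it lies in $\langle \bar d\rangle$ modulo $k(\bar x)^{\times2}$; since $\bar d\equiv\prod_b(\bar x - b)^{e(b/w)}$, this says the $\F_2$-vector $(v_b(\ell))_b$ equals $0$ or $(e(b/w))_b$, equivalently that the matrix with rows $(v_b(\ell))_b$ and $(e(b/w))_b$ has rank at most $1$, which is precisely~\eqref{eq:unram-irred} (here $(e(b/w))_b\neq0$ because $\bar d$ is a non-square). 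The hard part will be the valuation bookkeeping in the two-step tower—establishing $\sum e(F'/F)f(F'/F)=e(b'/w)$ together with the formula for $v_{F'}(\ell)$, so that the contribution of each (possibly singular) branch point $b$ assembles into $m_F v_b(\ell)$ independently of the local structure of $B^0$ at $b$—and verifying that the points of $\frakS$ contribute only squares.
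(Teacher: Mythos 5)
Your proposal follows essentially the same route as the paper's proof of the two reduced cases: starting from~\eqref{eq:purity-cores}, grouping the valuations $F'$ of $\kk(C_L)$ extending $v_F$ according to the points $b'\in\Bfl_w$, showing that the contributions from points over $\frakS$ reduce to constants (hence squares, $k$ being separably closed of characteristic $\neq 2$), assembling the residue as $\prod_b(\bar x - b)^{m_F v_b(\ell)}$ with $v_b(\ell)=\sum_{b'\mapsto b}v_{b'}(\ell)$, and finishing with exactly the paper's endgame (all $e(b/w)$ even in the reducible case; membership in $\langle\bar d\rangle$ modulo squares, i.e.\ the rank-one condition, in the irreducible case, using that some $e(b_0/w)$ is odd). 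The one genuinely different piece is case (3): you run the same computation uniformly with $m_F=2$, whereas the paper argues structurally that $\gamma'(\ell)$ is restricted from $\Br\kk(S)$ and that, since $X^0\to S$ is ramified along $S_w$, the induced map on residues multiplies by the ramification index $2$ and so kills $2$-torsion. Your version is also valid --- the tower identity $\sum_{F'}e(F'/F)f(F'/F)=[L_{b'}:K_w]$ holds here because the relevant normalizations are finite over the DVRs in question --- and it even sidesteps the paper's reliance on the regularity of $X\times_W\Bfl$ in codimension one, which the paper only asserts over reduced fibers (one reason it switches arguments for case (3)).

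There is, however, one missing normalization. You assert that the affine model $y^2=cf(x)$ makes $S_w\cong\PP^1_k$ with coordinate $\bar x$; but $x$ is a coordinate only on the \emph{generic} fiber, and at a given $w$ it may have a zero or pole along $S_w$, or restrict to a constant there. In that event your claims $v_{F'}(x)=0$, $\kk(F)=k(\bar x)(\sqrt{\bar d})$, and the Gauss-valuation description of $\bar d$ all fail. The paper spends a lemma on exactly this point: by Noether--Enriques there is, after shrinking a neighborhood $U$ of $w$, a trivialization $S_U\cong\PP^1\times U$ carrying $\frakS$ to $\{\infty\}\times U$ and inducing $x\mapsto ax+b$ with $a\in K^\times$, $b\in K$; one then checks $\gamma'(\ell)$ is unchanged under this substitution because $\Cor_{\kk(C_L)/\kk(C)}\left((\ell,a)_2\right)$ is constant, hence trivial. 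Your argument needs this step (or an equivalent) before any of the local computations are legitimate. Relatedly, your trichotomy by the parity of $v_w(c)$ is imprecise: whether $S_w\subseteq B^0$ is governed by the parity of the valuation of $cf(x)$ along $S_w$, and for $f$ merely monic over $K$ the valuation $v_{S_w}(f)$ need not vanish (e.g.\ when branch points collide with $\frakS$ over $w$); this is cosmetic, though, since your case-(3) computation uses only $m_F=2$, which is correct precisely when $S_w\subseteq B^0$.
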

			
			\begin{cor}
				\label{cor:UnramifiedOutsideE}
				Let $\overline{\ell} \in \frakL_c$. Then $\gamma(\overline{\ell}) \in \Br(X \setminus \calE)$ if and only if some (equivalently every) representative of $\overline\ell$ satisfies~\eqref{eq:unram-irred} at every $w \in W$ such that $X^0_w$ is reduced and irreducible and $S_w$ is smooth and satisfies~\eqref{eq:unram-red} at every $w \in W$ such that $X^0_w$ is reduced and reducible and $S_w$ is smooth.
			\end{cor}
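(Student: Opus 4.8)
The plan is to deduce the corollary from Proposition~\ref{prop:UnramifiedConditions} by applying the purity description~\eqref{eq:purity} to the smooth open surface $X \setminus \calE$. Removing the curves of $\calE$ from the smooth surface $X$ leaves a smooth variety, so purity identifies
\[
	\Br(X \setminus \calE)[2] = \bigcap_{F} \ker\left(\Br\kk(X)[2] \stackrel{\partial_F}{\To} \kk(F)^\times/\kk(F)^{\times 2}\right),
\]
where $F$ ranges over the codimension-one points of $X \setminus \calE$, i.e.\ over the non-exceptional divisors of $X$. Thus the first step is simply to record that $\gamma(\overline{\ell}) \in \Br(X \setminus \calE)$ if and only if $\partial_F(\gamma(\overline{\ell}))$ is trivial for every non-exceptional $F$.

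Next I would split the non-exceptional divisors into horizontal and vertical ones. Since $\overline{\ell} \in \frakL_c$, Theorem~\ref{thm:gammawelldefined} gives $\gamma(\overline{\ell}) = \gamma'(\ell) \in \Br C$, and $\Br C \subseteq \Br\kk(X)$ is exactly the subgroup unramified at every horizontal divisor. Hence all horizontal residues vanish automatically, and membership in $\Br(X \setminus \calE)$ reduces to the vanishing of $\partial_F(\gamma(\overline{\ell}))$ for every \emph{vertical} non-exceptional divisor $F$.

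The third step is to pin down these vertical divisors. A vertical non-exceptional curve lies over some $w \in W$; by the definition of $\calE$ such a $w$ necessarily has $S_w$ smooth, and conversely over a smooth fibre the only curves produced by the canonical resolution are contracted to points of $X^0$ and so lie in $\calE$. Therefore the vertical non-exceptional divisors over $w$ are precisely the strict transforms of the reduced components of $X^0_w$. I would then run through the trichotomy for a smooth fibre: if $S_w \not\subseteq B^0$ then $X^0_w = \pi^*S_w$ is reduced, and is either irreducible (Proposition~\ref{prop:UnramifiedConditions}(1), giving condition~\eqref{eq:unram-irred}) or reducible (Proposition~\ref{prop:UnramifiedConditions}(2), giving condition~\eqref{eq:unram-red}); if $S_w \subseteq B^0$ then $X^0_w$ is non-reduced and Proposition~\ref{prop:UnramifiedConditions}(3) shows the residue at $(X^0_w)_{\mathrm{red}}$ is always a square, so this fibre imposes no condition. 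Intersecting over all such $w$ produces exactly the stated list of conditions.

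Finally, the ``some (equivalently every) representative'' clause is automatic: the residue $\partial_F(\gamma(\overline{\ell}))$ depends only on the class $\overline{\ell} \in \frakL_c$, so the equivalences furnished by the Proposition force conditions~\eqref{eq:unram-irred} and~\eqref{eq:unram-red} to be independent of the chosen lift $\ell \in L^\times$. The only point requiring genuine care is the bookkeeping in the third step, namely verifying that over each smooth fibre every non-exceptional vertical divisor is accounted for by one of the three cases and that the residue taken at a strict transform on $X$ agrees with the residue appearing in the Proposition. Since the substantive residue computations have already been carried out in Proposition~\ref{prop:UnramifiedConditions}, I expect the corollary to be essentially a repackaging of that result through purity rather than a new computation.
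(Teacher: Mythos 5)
Your proposal is correct and follows essentially the same route as the paper: the paper's (much terser) proof likewise observes that every non-exceptional vertical curve lies over a smooth $S_w$, that $X^0_w$ is non-reduced exactly when $S_w \subseteq B^0$, and then invokes the trichotomy of Proposition~\ref{prop:UnramifiedConditions}, with the purity/horizontal-divisor reductions you spell out already built into the framework of \S\ref{sec:UnramConditions}. Your version just makes explicit the bookkeeping the paper leaves implicit.
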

			\begin{proof} 
				Every non-exceptional vertical curve maps dominantly to a smooth and irreducible $S_w$ for some $w \in W$.  If $S_w$ is smooth and irreducible, then $X^0_w$ is \edited{non-}reduced\Editnote{Comment 13} if and only if $S_w\subseteq B^0$.  Therefore, for every $F\in X^{(1)}\setminus \calE$, Proposition~\ref{prop:UnramifiedConditions} gives necessary and sufficient conditions for $\partial_F(\ell)\in \kk(F)^{\times2}.$  This is exactly the content of the Corollary.
			\end{proof}
			
			\begin{proof}[Proof of Proposition~\ref{prop:UnramifiedConditions}]
				Fix $\ell\in L^{\times}$,\edited{ $w\in W$ such that $S_w$ is smooth,} and let $F\subseteq X^0_{w}$ be a reduced and irreducible curve.  By~\eqref{eq:purity-cores}, we have 
				\begin{equation}
					\label{eq:resatF}
					\partial_{F}(\gamma(\overline{\ell})) = 
					\prod_{\substack{F'\in (X_{\Bfl}')^{(1)}\\ 
					F' \mapsto F \textup{ dominantly}}}
					\Norm_{\kk(F')/\kk(F)}
					(\ell^{w'(x-\alpha)}(x-\alpha)^{\edited{-}w'(\ell)}).
				\end{equation}\Editnote{Comment 14}
				Here $X_{\Bfl}'$ denotes \edited{a}\Editnote{Comment 15} desingularization of $X_{\Bfl} := X\times_{W}\Bfl$, and $w'$ denotes the valuation associated to $F'$.  The surface $X\times_{W}\Bfl$ is regular at all codimension $1$ points lying over $w\in W$ such that $X^0_w$ is reduced.
	
				Assume that $X^0_w$ is not reduced, or, equivalently, that $S_w\subseteq B^0$.  Then the map on residues $\HH^1(\kk(S_w), \Q/\Z) \to \HH^1(\kk(X^0_w), \Q/\Z)$ is identically zero on $2$-torsion classes.  Since $\gamma'(\ell)\in \im\left(\res \Br \kk(S)\to \Br \kk(X)\right)$, the residue $\partial_{(X^0_w)_{\textup{red}}}(\edited{\gamma'(\ell)}) \in \kk((X^0_w)_{\textup{red}})^{\times2}$\Editnote{Comment 16} for all $\ell\in L^{\times}$.
					
				Henceforth, we assume that $X^0_w$ is reduced, or, equivalently, that $\BOfl_w = B^0_w$.   Then, since $X\times_{W}\Bfl$ is regular at all codimension $1$ points above $w$, the prime divisors of $X_{\Bfl}'$ that map dominantly to $F$ are \edited{in one-to-one correspondence with} the prime divisors of $X_{\Bfl}$ that map dominantly to $F.$ 

				To compute the residues at $X^0_w$, we will need to have a model of the fiber.  For this, we will use the following lemma.
                %%%%%%%%%%%%%%%%%%%%%%%%%
				\begin{lemma}
					For every $w\in W$ such that $S_w$ is smooth, there exists an open set $U\subseteq W$ containing $w$ and constants $a \edited{\in K^{\times}},b\in K$ such that
					\[
						S_U \stackrel{\sim}{\to}\PP^1_k\times U, s\mapsto (ax(s) + b, \varpi(s)).
					\]
				\end{lemma}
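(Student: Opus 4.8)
The plan is to reduce to the local ring $R := \OO_{W,w}$, a discrete valuation ring with fraction field $K = \kk(W)$ and, since $W$ is smooth over the separably closed field $k$, residue field $k$. I would then show that $S_R := S\times_W \Spec R$ is isomorphic to $\PP^1_R$ in a way compatible with the coordinate $x$ up to an affine change of variables. Spreading the resulting isomorphism out to an open neighbourhood $U$ of $w$ gives the statement, as the constants $a,b$ produced lie in $K$ and hence are regular on some (possibly smaller) neighbourhood of $w$.

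First I would record two preliminary facts. Since $S$ is smooth over $k$ and the fibre $S_w$ is smooth of dimension $1$, the fibrewise criterion for smoothness shows that $\varpi$ is smooth at every point of $S_w$; as the smooth locus is open, $\varpi$ is smooth over some open $U\ni w$, so $S_R\to\Spec R$ is a smooth proper morphism with geometrically integral genus-$0$ fibres. Second, I claim $\mathfrak{S}\to W$ is an isomorphism near $w$: being the flat closure of the single $K$-point $\infty$, the curve $\mathfrak{S}$ is integral and horizontal, so $\mathfrak{S}\to W$ is proper and quasi-finite, hence finite, and birational (degree $1$ over the generic point); a finite birational morphism onto the normal curve $W$ is an isomorphism. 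Thus $\mathfrak{S}$ furnishes a section of $\varpi$ over $\Spec R$.

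Now I would use the section to trivialize the bundle while tracking the pole divisor. The divisor $\mathfrak{S}\subset S_R$ restricts to a degree-$1$ divisor on each fibre, so $\calL := \OO_{S_R}(\mathfrak{S})$ is relatively $\OO(1)$ and, by cohomology and base change, $\mathcal V := \varpi_*\calL$ is locally free of rank $2$; as $R$ is local it is free. Choosing the canonical section $s_\infty$ of $\calL$ cutting out $\mathfrak{S}$ and completing it to an $R$-basis $s_0,s_\infty$ of $\mathcal V$ identifies $S_R\xrightarrow{\sim}\PP(\mathcal V)\cong\PP^1_R$ so that $\mathfrak{S}=\{s_\infty=0\}$ is the section at infinity. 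The rational function $u:=s_0/s_\infty$ is then a fibrewise coordinate whose only pole is a simple pole along $\mathfrak{S}$. Restricting to the generic fibre, both $u$ and $x$ are coordinates on $\PP^1_K$ regular away from $\infty$ with at worst a simple pole there, so $u\in H^0(\PP^1_K,\OO(\infty))=K\cdot 1 + K\cdot x$; that is, $u=ax+b$ with $a\in K^\times$ (as $u$ is nonconstant) and $b\in K$. Under $\PP^1_R=\PP^1_k\times\Spec R$, the isomorphism $S_R\xrightarrow{\sim}\PP^1_R$ is exactly $s\mapsto(ax(s)+b,\varpi(s))$.

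The step I expect to be the crux is explaining why the constant $b$ is genuinely needed and is produced correctly. One is tempted to argue that, after absorbing the vertical part of $\divv(x)$ by scaling $x\mapsto x/t^{v}$ with $t$ a uniformizer, the function $x$ itself extends to the desired isomorphism; but the closures $\overline{\{x=0\}}$ and $\mathfrak{S}=\overline{\{x=\infty\}}$, though disjoint over the generic point, may meet on $S_w$, in which case $x|_{S_w}$ degenerates and $(x,\varpi)$ fails to be an isomorphism along $S_w$. The virtue of building the trivialization from $\mathfrak{S}$ as the section at infinity is that the zero locus $\{s_0=0\}=\{u=0\}=\{x=-b/a\}$ is automatically disjoint from $\mathfrak{S}$ on every fibre; the freedom to move this zero section is precisely the role of $b$. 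Finally, spreading out, the isomorphism over $\Spec R$ extends to an isomorphism $S_U\xrightarrow{\sim}\PP^1_k\times U$ over some affine open $U\ni w$ contained in the smooth locus of $\varpi$ on which $a$ and $b$ are regular, completing the proof.
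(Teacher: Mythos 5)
Your proof is correct, but it takes a genuinely different route from the paper's. The paper's proof is a two-line reduction to a black box: it invokes the Noether--Enriques theorem to produce \emph{some} local trivialization $\varphi\colon S_U \to \PP^1\times U$, then (after shrinking $U$) composes with an automorphism of $\PP^1_U$ moving $\varphi(\mathfrak{S})$ to $\{\infty\}\times U$, and finally observes that the induced automorphism of $\PP^1_K$ fixes $\infty$ and is therefore of the form $x\mapsto ax+b$. You instead build the trivialization by hand over $R=\OO_{W,w}$: you check that $\mathfrak{S}$ is a genuine section near $w$ (finite birational onto the normal curve $W$), push forward $\OO_{S_R}(\mathfrak{S})$ to get a free rank-$2$ module, and choose a basis containing the canonical section cutting out $\mathfrak{S}$, so that the resulting affine coordinate $u=s_0/s_\infty$ has its only pole along $\mathfrak{S}$; comparing with $x$ on the generic fibre via $H^0(\PP^1_K,\OO(\infty))=K+Kx$ then produces $a$ and $b$ directly. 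The endgame is the same in both arguments --- an automorphism of $\PP^1_K$ fixing $\infty$ is affine --- but your construction is self-contained (no appeal to Noether--Enriques) and makes the role of the section $\mathfrak{S}$ and of the translation constant $b$ transparent, at the cost of carrying out the cohomology-and-base-change and spreading-out bookkeeping that the citation lets the paper skip. Your aside explaining why one cannot simply rescale $x$ by a power of a uniformizer (the closures of $\{x=0\}$ and $\{x=\infty\}$ may collide on the special fibre) is a correct and worthwhile observation that the paper's proof leaves implicit.
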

				\begin{proof}
					By the Noether-Enriques theorem~\cite[Thm. III.4]{Beauville-CAS}, there is an isomorphism $\varphi\colon S_U\to \PP^1\times U$ which commutes with the obvious morphisms to $U$.  After possibly \edited{shrinking $U$ and possibly} composing with an automorphism of $\PP^1_U$, we may assume that $\mathfrak{S}$ maps to $\{\infty\}\times U$\Editnote{Comment 11}.  To complete the proof we observe that $\varphi$ must induce an automorphism of $\PP^1_K$ that preserves $\infty$.
				\end{proof}
                %%%%%%%%%%%%%%%%%%%%%%%%%
				Fix $U\subseteq W$, $a \edited{\in K^{\times}},b\in K$ as in the lemma.   Note that, the algebra $\Cor_{\kk(C_L)/\kk(C)}\left((\ell, a)_2\right)$ is constant, and hence trivial.  Therefore, $\Cor_{\kk(C_L)/\kk(C)}\left((\ell, ax + b - (a\alpha + b))_2\right) = \gamma'(\ell)$. 
				
				{Thus, by replacing $x$ with $ax + b$, $\alpha$ with $a\alpha + b$ and $f$ with $f(x/a - b/a)$ if necessary, we may assume that $x$ is a horizontal function}, i.e. that it has no zeros or poles along any fibers of $U$, and that it restricts to a non-constant function along any fiber of $U$.
				Then the function $x - \alpha$ has non-positive valuation along any fiber of $X_{\Bfl_U}$, and it has negative valuation on the fibers of $X_{\Bfl_U}$ where $\alpha$ has negative valuation.
								
                \edited{
                Let $F'$ be a prime divisor of $X_{\Bfl}$ that maps dominantly to $F$, let $w'$ be the associated valuation, and let $b\in \Bfl_w$ be the point that $F'$ lies over; note that $\kk(F') = \kk(F_{\kk(b)})$.\Editnote{Comment 10}  If $w'(x - \alpha)$ is negative, then $b$ lies over $B^0_w\cap \frakS$ and the function $(x - \alpha)^{w'(\ell)}/\ell^{w'(x - \alpha)}$ reduces to a constant in $\kk(F')$ (since $w'(x) = 0$).  Therefore,
                \[
                \partial_{F}(\gamma(\overline{\ell})) = \prod_{
                b\in B^0_{w}\setminus (B^0_{w} \cap \mathfrak{S})}
                \Norm_{\kk(b)/\kk(w)}(x - \alpha(b))^{v_b(\ell)}.
                \]
                Since $\kk(F) = k(x)\left(\sqrt{\prod_{b\in B^0_{w}\setminus (B^0_{w} \cap \mathfrak{S})}\Norm_{\kk(b)/\kk(w)}(x - \alpha(b))^{e(b/w)}}\right)$, $\gamma(\overline{\ell})$ is unramified at $F$ if and only if there exists some integer $m$ such that
                \begin{equation}\label{eq:UnramifiedReduced}
                    v_b(\ell) + me(b/w) \equiv 0 \bmod 2, \quad\textup{for all }
                    b\in B^0_{w}\setminus (B^0_{w} \cap \mathfrak{S}).
                \end{equation}
                
                Assume that $X^0_{w}$ is reducible. Then $e(b/w) \equiv 0 \bmod 2$ for all $b\in B^0_{w}\setminus (B^0_{w} \cap \mathfrak{S})$.  Hence~\eqref{eq:UnramifiedReduced} is equivalent to~\eqref{eq:unram-red}.  If $X^0_{w}$ is irreducible, then there exists a $b_0\in B^0_{w}\setminus (B^0_{w} \cap \mathfrak{S})$ such that $e(b_0/w)\equiv 1\bmod 2$, so~\eqref{eq:UnramifiedReduced} is equivalent to the matrix 
		
                \[
                    \begin{pmatrix} 
                        v_b(\ell) & e(b/w)
                    \end{pmatrix}_{b\in B^0_{w}\setminus 
                    (B^0_{w} \cap \mathfrak{S})}
                \]    
                having rank $1$.  This is clearly equivalent to~\eqref{eq:unram-irred}, which completes the proof.    
                }\Editnote{This rewrite addresses Comments 10, 17, 18, 19, and 20.}
			\end{proof}

		%%%%%%%%%%%%%%%%%%%%%%%%%%%%%%%%%%%%%%%%%%%%%%%%%%%%%%%%%%%%%%%%%%%%%%%%
		\subsection{Exceptional curves lying over simple singularities}
		%%%%%%%%%%%%%%%%%%%%%%%%%%%%%%%%%%%%%%%%%%%%%%%%%%%%%%%%%%%%%%%%%%%%%%%%
			\begin{prop}\label{prop:-2curves}
				Let $F$ be a $(-2)$-curve lying over a simple singularity of $X^0$.  If $A \in\Br \kk(X)[2]$ is unramified at all curves $F'\subseteq X$ that intersect $F$ and that are not contracted in $X^0$, then $A$ is unramified at $F$. In particular, if $S$ is geometrically ruled and $B^0$ has at worst simple singularities, then $\Br X[2] = \Br (X\setminus\calE)[2].$
			\end{prop}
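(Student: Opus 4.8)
The plan is to reduce the vanishing of $\partial_F(A)$ to a combinatorial statement on the dual graph of the exceptional configuration and to exploit that this graph is a tree. First I would localize at the simple singularity $p\in X^0$ over which $F$ lies. By the theory of canonical resolutions \cite[\S III.7]{BHPvdV}, the preimage of $p$ is a connected configuration $\calC = F_1\cup\cdots\cup F_r$ of $(-2)$-curves, each $F_i\cong\PP^1_k$, whose dual graph $\Gamma'$ is an ADE Dynkin diagram and in particular a \emph{tree}, and whose union with the strict transforms of the non-contracted curves through $p$ has normal crossings. Since $k$ is separably closed, the Kummer sequence together with $\Pic\PP^1_k=\Z$ gives $\HH^1(\PP^1_k,\Z/2)=0$; equivalently, a class in $\HH^1(\kk(F_i),\Z/2)$ vanishes as soon as it is unramified at every closed point of $F_i$. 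Thus it suffices to show that each $\beta_i:=\partial_{F_i}(A)$ is everywhere unramified. I would prove this for all $F_i$ simultaneously, under the assumption that $A$ is unramified at every non-contracted curve meeting $\calC$; this is exactly what the hypothesis provides (literally so when $F$ is a leaf, and in general after passing to the full configuration).

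Next I would invoke the fact that the residue maps fit into the coniveau (Bloch--Ogus) complex, so that the composite of two consecutive differentials vanishes: for every closed point $P$ of the smooth surface $X$, one has $\sum_{C\ni P}\partial^{C}_P(\partial_C A)=0$ in $\Z/2$, the sum running over the curves through $P$. Evaluating at $P\in F_i$ and using the normal crossings property (at most two curves through each point) yields three cases. If $P$ lies on no other curve, then $\partial^{F_i}_P(\beta_i)=0$. If $P=F_i\cap F'$ with $F'$ non-contracted, then $\partial_{F'}(A)=0$ by hypothesis, so again $\partial^{F_i}_P(\beta_i)=0$. The only remaining ramification of $\beta_i$ occurs at the points $P_{ij}=F_i\cap F_j$ with $F_j$ another exceptional curve, where the complex gives the symmetric relation $\partial^{F_i}_{P_{ij}}(\beta_i)=\partial^{F_j}_{P_{ij}}(\beta_j)=:x_{ij}\in\Z/2$.

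I would then read off the constraints on the $x_{ij}$. Applying reciprocity on each $F_i\cong\PP^1$ (the degree of a principal divisor vanishes) gives $\sum_{j\sim i}x_{ij}=0$ for every $i$. Hence $(x_{ij})$ is a $1$-chain on $\Gamma'$ with vanishing boundary, i.e. a $1$-cycle with $\Z/2$-coefficients; since $\Gamma'$ is a tree, $Z_1(\Gamma';\Z/2)=H_1(\Gamma';\Z/2)=0$, so every $x_{ij}=0$. Therefore each $\beta_i$ is unramified at all of $F_i$ and so vanishes, which shows that $A$ is unramified at each $F_i$, in particular at $F$. Note that the displayed statement is recovered verbatim when $F$ is a leaf of $\Gamma'$ (including the $A_1$ case $r=1$): then $F$ has a single exceptional neighbour, and reciprocity on $F$ alone already forces $x=0$; a general $F$ genuinely requires the full configuration (equivalently, one peels off leaves).

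Finally, for the ``in particular'' clause: when $S$ is geometrically ruled every $S_w$ is smooth, so $\calE$ consists precisely of the contracted curves, and by Remark~\ref{rmk:exceptional} each is an exceptional divisor of a blow-up; as $B^0$ has at worst simple singularities these are exactly the $(-2)$-curves of the configurations above, and ``non-contracted'' coincides with ``non-exceptional.'' For $A\in\Br(X\setminus\calE)[2]$, purity~\eqref{eq:purity} shows that $A$ is unramified at all non-exceptional curves, so the hypothesis holds for every exceptional configuration; the argument above gives $A\in\Br X[2]$, and the reverse inclusion is immediate, whence $\Br X[2]=\Br(X\setminus\calE)[2]$. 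The crux of the whole argument is the third case above, the intersection points between two exceptional curves, where the neighbouring residue $\beta_j$ is a priori unknown; this is what forces one to treat an entire connected configuration rather than a single curve in isolation, and it is resolved exactly by the acyclicity of the ADE dual graph.
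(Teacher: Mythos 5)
Your proposal is correct and follows essentially the same route as the paper: both rest on the Bloch--Ogus/Kato complex to relate residues along the curves through each closed point, on the rationality of the $(-2)$-curves (a function on $\PP^1$ over a separably closed field with everywhere even divisor is a square), and on the acyclicity of the ADE dual graph. The only difference is presentational: you phrase the final combinatorial step as the vanishing of $Z_1$ of a tree, whereas the paper peels off leaves inductively --- these are the same argument.
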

			
			\begin{proof}[Proof of Proposition~\ref{prop:-2curves}]
				The canonical resolution of a simple singularity consists of a \edited{series of blow-ups}\Editnote{Comment 1}.  Therefore, the preimage of a simple singularity $P\in X^0$ is a tree of $(-2)$-curves.  Consider any $(-2)$-curve $F$ which is a leaf of the tree (i.e., has valence $1$) and let $Q\in F$ be the (unique) point which intersects another curve in the tree.
				
				As a special case of the Bloch-Ogus arithmetic complex~\cite[\S1, Prop. 1.7]{Kato}, we have the complex
				\[
					\Br \kk(X)[2] \stackrel{\oplus\partial_{F'}}\To \bigoplus_{F'\in X^{(1)}}\frac{\kk(F')^{\times}}{\kk(F')^{\times 2}} \To \bigoplus_{P\in X^{(2)}} \Z/2\Z.
				\]
				Therefore, for every codimension $2$ point $P\in X$, we have
				\[
					\sum_{\substack{F'\in X^{(1)} \\\text{with } P\in\overline{F'}}} v_P(\partial_{F'}(A)) \equiv 0 \bmod 2.
				\]
				{By assumption $\partial_{F'}(A) \in \kk(F')^{\times 2}$} for all $F'\in X^{(1)}$ whose closure intersects $F$ and is not contracted in $X^0$. Hence $v_P(\partial_F(A)) \equiv 0 \bmod 2$ for all $P\in X^{(2)}$ such that $P\in F$ and $P\neq Q$.  Since {$F$ is rational, this implies $\partial_F(A) \in \kk(F)^{\times 2}$.}  Therefore $A$ is unramified at all $(-2)$-curves which are leaves of the tree.  
				
				The same proof then shows that $A$ is unramified at all $(-2)$-curves $F$ such that all the children of $F$ are leaves.  Then we apply the same argument to all curves $F$ such that all of the grandchildren of $F$ are leaves, and so on, until we have shown that $A$ is unramified at all curves in the tree.
				
				For the final claim, we note that if $S$ is geometrically ruled and $B^0$ has at worst simple singularities, then $\calE$ consists of $(-2)$-curves lying over simple singularities of $X^0$.
			\end{proof}

    %%%%%%%%%%%%%%%%%%%%%%%%%%%%%%%%%%%%%%%%%%%%%%%%%%%%%%%%%%%%%%%%%%%%%%%%%%%%
	\edited{\section{Candidate functions}\label{sec:Functions}}%%%%%%%%%%%%%%%%%
    \Editnote{This section has been added to address Comment 21-28}%%%%%%%%%%%%%
    %%%%%%%%%%%%%%%%%%%%%%%%%%%%%%%%%%%%%%%%%%%%%%%%%%%%%%%%%%%%%%%%%%%%%%%%%%%%
        Consider the composition
		\[
			\psi: \kk(B)^\times \To \frac{\kk(B)^\times}{\kk(B)^{\times 2}} \stackrel{\divv}{\To} \Div(B) \otimes \Z/2\Z \stackrel{\nu_*}\To \Div(B^0) \otimes \Z/2\Z\,.
		\]
		Fix a point $w_1\in W$ such that $X^{0}_{w_1}$ is reduced and irreducible, and define the group 
		\[
			\kk(B)_\calE := \left\{ \tilde\ell \in \kk(B)^\times : \psi(\ell) \in \langle \frakS\cap B^0, B^0_{w_1}\rangle \subset \Div(B^0)\otimes\Z/2\right\}\,.
		\]
        Let $L_{\calE}$ denote the image of $\kk(B)_{\calE}$ under the projection map $\kk(B)^{\times}\to \kk(\Bfl)^\times = L^{\times}$.

	In Section~\ref{sec:BrX2presentation} we will prove that the elements of $L_{\calE}$ satisfy
        %If $\ell\in L_{\calE}$, then it is evident that $\ell$ satisfies
	condition~\eqref{eq:unram-irred} (resp.~\eqref{eq:unram-red}) for every $w\in W$ such that $S_w$ is smooth and $X^0_w$ is reduced and irreducible (resp. reducible).  Therefore the classes in $\gamma'(L_\calE)$ are unramified at all non-exceptional vertical curves.  
        
        The goal of this section is to characterize the elements of $L_{\calE}$.  Specifically, we will prove:
		\begin{prop*}
			The group $L_\calE$ admits a filtration
			\[
                0 \subset L^{\times 2}= G_4 \subset G_3 \subset G_2 \subset G_1 \subset G_0 = L_\calE\,,
			\]
			where 
			$G_3/G_4 \cong \Jac(B)[2], \;
                	G_2/G_3 \cong H_1(\Gamma), \;
                	G_1/G_2\isom(\Z/2)^{m_1},\;\textup{and } 
			G_0/G_1\isom(\Z/2)^{m_c}\;\\ \textup{with } m_1,m_c \in \{0,1\}.$
		\end{prop*}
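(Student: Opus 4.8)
The plan is to realize the four graded pieces as the successive quotients of a filtration defined by the support of the divisor, built first on $\kk(B)_\calE$ and then transported to $L_\calE$ along the projection $p\colon \kk(B)^\times \to L^\times$. Write $\psi_B\colon \kk(B)^\times \to \Div(B)\otimes\Z/2$ for the reduction of $\divv$ modulo $2$, so that $\psi = \nu_*\circ\psi_B$. I would set
\[
\widetilde G_4 = \kk(B)^{\times 2}\subseteq \widetilde G_3 = \ker\psi_B \subseteq \widetilde G_2 = \ker\psi \subseteq \widetilde G_1 \subseteq \widetilde G_0 = \kk(B)_\calE,
\]
where $\widetilde G_1 = \{\tilde\ell : \psi(\tilde\ell)\in\langle B^0_{w_1}\rangle\}$, and then define $G_i := p(\widetilde G_i)$. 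All inclusions are immediate from $\{0\}\subseteq\langle B^0_{w_1}\rangle\subseteq\langle\frakS\cap B^0, B^0_{w_1}\rangle$ and from $\psi=\nu_*\circ\psi_B$. It remains to compute each graded piece and to check that $p$ induces isomorphisms on them.

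For the bottom piece I would use Kummer theory. The assignment $\tilde\ell\mapsto\bigl[\tfrac12\divv\tilde\ell\bigr]$ identifies $\widetilde G_3/\widetilde G_4$ with $\Pic(B)[2]$: it is surjective by definition, and injective because over the separably closed field $k$ a function with $2$-divisible divisor is a constant times a square, hence a square. Since a $2$-torsion class has degree zero on every component and the vertical components of $B$ are rational, $\Pic(B)[2]=\Jac(\Bfl)[2]=\Jac(B)[2]$. For the piece $\widetilde G_2/\widetilde G_3$ I would exhibit the combinatorial identification with the dual graph. The map $\iota\colon C_1(\Gamma)\to\ker\bigl(\nu_*\colon\Div(B)\otimes\Z/2\to\Div(B^0)\otimes\Z/2\bigr)$ sending $e_{b,i}\mapsto [b_{i-1}'] + [b_i']$ is an isomorphism, and under it $\partial\colon C_1(\Gamma)\to C_0(\Gamma)$ corresponds to recording component-degrees. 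Because $\psi_B(\tilde\ell)$ has component-degree zero (principal divisors have degree zero on each component) and lies in $\ker\nu_*$ exactly when $\psi(\tilde\ell)=0$, the composite $\iota^{-1}\circ\psi_B$ carries $\widetilde G_2$ into $Z_1(\Gamma)=H_1(\Gamma)$ with kernel precisely $\widetilde G_3$, giving an injection $\widetilde G_2/\widetilde G_3\hookrightarrow H_1(\Gamma)$.

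The top two pieces are then essentially formal. Since $\langle\frakS\cap B^0, B^0_{w_1}\rangle$ is generated by the two classes $[\frakS\cap B^0]$ and $[B^0_{w_1}]$, it has $\F_2$-dimension at most $2$; the quotients $\widetilde G_1/\widetilde G_2$ and $\widetilde G_0/\widetilde G_1$ inject into $\langle[B^0_{w_1}]\rangle$ and into $\langle\frakS\cap B^0, B^0_{w_1}\rangle/\langle[B^0_{w_1}]\rangle$ respectively, each of order dividing $2$, so they are $(\Z/2)^{m_1}$ and $(\Z/2)^{m_c}$ with $m_1,m_c\in\{0,1\}$. Here no control over the projection $p$ is needed, since the bound $m_1,m_c\le 1$ is insensitive to passing to a further quotient.

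I expect the main obstacle to be the surjectivity of $\iota^{-1}\circ\psi_B$ onto $H_1(\Gamma)$ together with the compatibility of the whole filtration with $p$. For surjectivity I would lift a cycle $z$ to an integral divisor $D$ with $D\equiv\iota(z)\pmod 2$ and even component-degrees; because $\Jac(B)(k)$ is $2$-divisible (as $k$ is separably closed of characteristic $\neq 2$) and every component of $B$ carries a rational point, the class $[D]$ then lies in $2\Pic(B)$, so $D-2E$ is principal for some $E$ and realizes $z$ by a function with $\psi=0$. For compatibility the key computation is that a purely vertical element $(1,g_1,\dots,g_n)\in\ker p$ lying in $\ker\psi$ is automatically a square: a divisor supported on a rational vertical component that is annihilated by $\nu_*$ modulo $2$ is already $2$-divisible there, hence the $g_i$ are squares. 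This forces $\ker p\cap\widetilde G_2\subseteq\widetilde G_4$, so $p$ induces isomorphisms $G_3/G_4\cong\widetilde G_3/\widetilde G_4$ and $G_2/G_3\cong\widetilde G_2/\widetilde G_3\cong H_1(\Gamma)$. The most delicate bookkeeping will be tracking the flat/vertical decomposition $\kk(B)=L\times k(x)^n$ throughout and verifying that $\iota$ is well defined with respect to the chosen ordering of the preimages $b_0',\dots,b_s'$ used to define $\Gamma$.
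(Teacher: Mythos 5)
Your proposal is correct and follows essentially the same route as the paper: the same divisor-theoretic filtration on $\kk(B)_\calE$, the same identification of the graded pieces via the exact diagram relating $C_1(\Gamma)$, $\Div(B)\otimes\Z/2$ and $\Div(B^0)\otimes\Z/2$ together with $2$-divisibility of $\Jac(B)$, and the same descent to $L_\calE$ by showing that purely vertical elements in the kernel of the projection are squares (the paper's Lemma on $\operatorname{pr}\colon\kk(B)\to L$). The only cosmetic difference is that you verify $\ker p\cap\widetilde G_2\subseteq\widetilde G_4$ rather than the paper's slightly stronger $\ker p\cap\kk(B)_\calE\subseteq\kk(B)^{\times 2}$, which suffices here because the top two graded pieces only require the upper bound $m_1,m_c\le 1$; just note that when a vertical tree $\nu(T_i)$ has more than one component the ``$\nu_*$-even implies $2$-divisible'' step needs the degree-zero condition on each component plus an induction from the leaves, exactly as in the paper's parity criterion for non-squares on $T_i$.
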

		The isomorphisms in the proposition will be made explicit, providing generators for each subgroup $G_i$, as well as formulas for $m_1$ and $m_c$ (see Proposition~\ref{prop:Filtration}).
        
        %%%%%%%%%%%%%%%%%%%%%%%%%%%%%%%%%%%%%%%%%%%%%%%%%%%%%%%%%%%%%%%%%%%%%%%%
        \subsection{Principal divisors on $B$}%%%%%%%%%%%%%%%%%%%%%%%%%%%%%%%%%%
        %%%%%%%%%%%%%%%%%%%%%%%%%%%%%%%%%%%%%%%%%%%%%%%%%%%%%%%%%%%%%%%%%%%%%%%%

            Recall that $\Gamma$ denotes the dual graph of $B^0$, defined in Section~\ref{subsec:SurfacesNotation}.  Every edge of $\Gamma$ corresponds to a pair of points on $B$; this gives a homomorphism $\divv\colon C_1(\Gamma) \to \Div(B)\otimes \Z/2\Z$. As $C_0(\Gamma)$ is the free $\Z/2\Z$-module on the irreducible components of $B$ we may define a homomorphism $\deg:\Div(B) \to C_0(\Gamma)$ by taking the degree on each irreducible component. The group $H_0(\Gamma)$ is (isomorphic to) the free $\Z/2\Z$-module on the connected components of $B^0$ and we may define a homomorphism $\deg\colon\Div(B^0) \to H_0(\Gamma)$ by taking the degree on each connected component. We define $\calP$ and $\calP^0$ to be the kernels of the degree maps on $\Div(B)\otimes \Z/2\Z$ and $\Div(B^0)\otimes \Z/2\Z$, respectively. Putting this together gives a commutative and exact diagram. (One may easily check that the right two columns are exact and commutative; the rest follows from the snake lemma and the fact that $H_1(\Gamma) = Z_1(\Gamma)$.) 
        	\begin{equation}
        		\label{eq:homologysquare}
        		\xymatrix{
        			& 0\ar[d]&0\ar[d] && 0\ar[d] & \\
        			0\ar[r]
        			&H_1(\Gamma) \ar[r]\ar[d]
        			&C_1(\Gamma) \ar[rr]\ar[d]^{\divv}
        			&&\frac{C_1(\Gamma)}{Z_1(\Gamma)} \ar[r]\ar[d]^{\delta}&0\\
        			0\ar[r]
        			&\calP \ar[r]\ar[d]
        			&\Div(B)\otimes \Z/2\Z \ar[rr]^\deg\ar[d]^{\nu_*}
        			&&C_0(\Gamma) \ar[r]\ar[d]&0\\
        			0\ar[r]
        			&\calP^0 \ar[r]\ar[d]
        			&\Div(B^0)\otimes\Z/2\Z \ar[rr]^\deg\ar[d]
        			&&H_0(\Gamma) \ar[r]\ar[d]&0\\
        			&0&0&&0			 
        		}
        	\end{equation}

        \begin{lemma}\label{lem:PrincipalDivisors}
        	Suppose $D \in \Div(B)$. The following statements are equivalent.
        	\begin{enumerate}
        		\item $\nu_*(D)$ has even degree on every connected component of $B^0$.
        		\item There exists $D' \in \Div(B)$ with even degree on every irreducible component of $B$ and such that $\nu_*(D-D') \in 2\Div(B^0)$.
        		\item There exists a principal divisor $D' \in \Div(B)$ such that $\nu_*(D-D') \in 2\Div(B^0)$.
        	\end{enumerate}
        \end{lemma}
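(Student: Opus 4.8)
The plan is to translate each of the three conditions into a single membership statement for the class $\nu_*(\overline{D})\in\Div(B^0)\otimes\Z/2$, where $\overline{D}$ denotes the reduction of $D$ modulo $2$, and then to read off the equivalences from the commutative exact diagram~\eqref{eq:homologysquare}. The only genuinely geometric input is the $2$-divisibility of the Jacobians of the components of $B$, and this is where I expect the real work to lie. Concretely, after reducing modulo $2$: condition (1) asserts $\nu_*(\overline{D})\in\calP^0$; condition (2) asserts $\nu_*(\overline{D})\in\nu_*(\calP)$, since requiring $D'$ to have even degree on each irreducible component is exactly the condition $\overline{D'}\in\calP$ and every class in $\calP$ is represented by such a $D'$; and condition (3) asserts $\nu_*(\overline{D})\in\nu_*(\overline{\Princ(B)})$, where $\overline{\Princ(B)}$ is the image of the principal divisors, which lies inside $\calP$ because a principal divisor on a disjoint union of integral curves has degree $0$ on every component.

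With this dictionary in hand, the equivalence of (1) and (2) is immediate from the exactness of the left-hand column $0\to H_1(\Gamma)\to\calP\xrightarrow{\nu_*}\calP^0\to 0$ of~\eqref{eq:homologysquare}, which gives $\nu_*(\calP)=\calP^0$; both conditions then read $\nu_*(\overline{D})\in\calP^0$. The implication (3)$\Rightarrow$(2) is likewise formal, since $\overline{\Princ(B)}\subseteq\calP$.

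The heart of the argument is (2)$\Rightarrow$(3), equivalently the inclusion $\nu_*(\calP)\subseteq\nu_*(\overline{\Princ(B)})$. Here I would begin with a divisor $D'$ of even degree on each component $B_i$ and modify it by a divisor divisible by $2$ — which leaves its class modulo $2$, and hence $\nu_*(\overline{D'})$, unchanged — so as to make it principal. Since each $B_i$ is a smooth projective integral curve over the separably closed field $k$, it has a $k$-point, so the degree map $\Pic(B_i)\to\Z$ is surjective; and since $\operatorname{char}k\neq 2$, the group $\Pic^0(B_i)=\Jac(B_i)(k)$ is $2$-divisible. Consequently $2\Pic(B_i)$ contains every class of even degree, so I may write $[D'|_{B_i}]=2[E_i]$ for some $E_i$. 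Setting $D'':=D'-2\sum_i E_i$ then produces a divisor that is principal on each component, hence principal on $B$, and satisfies $\nu_*(\overline{D''})=\nu_*(\overline{D'})$; this $D''$ witnesses (3).

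The step I expect to be the main obstacle is precisely (2)$\Rightarrow$(3): it is the only place where the hypotheses on the base field are used, through the $2$-divisibility of $\Jac(B_i)(k)$, valid because $k$ is separably closed of characteristic different from $2$. The remaining implications are formal diagram chases in~\eqref{eq:homologysquare}, and the only points requiring a little care are the mod-$2$ reductions and the lifting of classes from $\Div(B)\otimes\Z/2$ back to $\Div(B)$.
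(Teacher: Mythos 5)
Your proof is correct and follows essentially the same route as the paper's: a diagram chase in~\eqref{eq:homologysquare} for the equivalence of (1) and (2) (the paper modifies $D$ by $\divv(\gamma)$ for a $1$-chain $\gamma$, which is the same as invoking surjectivity of $\nu_*:\calP\to\calP^0$), the $2$-divisibility of $\Jac(B)$ for (2)$\Rightarrow$(3), and the vanishing of degrees of principal divisors for the remaining implication. Your write-up simply fills in the details the paper leaves implicit, in particular the reduction of even-degree classes to $2\Pic(B_i)$ via a rational point on each component.
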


        \begin{proof}
        	$(1) \Rightarrow (2)$: This follows from a diagram chase. If $D$ is in the middle and maps to $0$ in the lower right, then we can modify $D$ by $\divv(\gamma)$ for some $\gamma \in C_1(\Gamma)$ to get a divisor with even degree on every irreducible component.\\
        	$(2) \Rightarrow (3)$: This follows from the fact that the Jacobian of $B$ is a $2$-divisible group.\\
        	$(3) \Rightarrow (1)$: This follows from the fact that a principal divisor has degree $0$ on every irreducible component.
        \end{proof}

        \begin{prop}\label{prop:Existence}
			\hfill
			\begin{enumerate}
				\item For each $D\in\Div(B)$ such that $[D] \in \Jac(B)[2]$ there exists a function $\tilde\ell_{D} \in \kk(B)^\times$ such that $\divv(\tilde\ell_{D}) = 2D$.
				\item For each cycle $\calC \in H_1(\Gamma)$ there exists a function $\tilde\ell_\calC \in \kk(B)^\times$ such that 							\[\divv(\tilde\ell_\calC) \equiv \divv(\calC) \pmod {2\Div(B)}\,.\]
				\item For every $(n_c, n_{1})\in \Z/2\Z^2$, there exists a function $\tilde\ell\in\kk(B)^{\times}$ such that 
                \[
                    \nu_*\divv(\ell) \equiv 
                    n_c(\frak{S} \cap B^0) + n_{1}B^0_{w_1} \pmod{2\Div(B^0)}
                \]
                if and only if $n_c\frakS + n_1S_{w_1}$ intersects every connected component of $B^0$ with even degree.  If the functions corresponding to $(1,0),(0,1),$ and $(1,1)$ exist, then we denote them by $\tilde\ell_c$, $\tilde\ell_1$, and $\tilde\ell_{c,1}$ respectively.
			\end{enumerate}
        \end{prop}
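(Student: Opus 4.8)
The plan is to dispatch the three parts in increasing order of difficulty, the common engine being that $\Jac(B)$ is $2$-divisible (valid since $k$ is separably closed) together with Lemma~\ref{lem:PrincipalDivisors}. Part (1) is essentially the definition of $2$-torsion: since $[D]\in\Jac(B)[2]=\Pic^0(B)[2]$, we have $2[D]=0$ in $\Pic(B)=\Div(B)/\Princ(B)$, which says exactly that $2D$ is principal, so any $\tilde\ell_D$ with $\divv(\tilde\ell_D)=2D$ works.

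For part (2), I would first read off from the top row and leftmost column of the diagram~\eqref{eq:homologysquare} that $\divv\bigl(H_1(\Gamma)\bigr)\subseteq\calP$; concretely, any lift $D_\calC\in\Div(B)$ of $\divv(\calC)$ has even degree on every irreducible component of $B$. On each component $B_i$, the evenness of $\deg\bigl(D_\calC|_{B_i}\bigr)$ together with the $2$-divisibility of $\Jac(B_i)$ gives $[D_\calC|_{B_i}]\in 2\Pic(B_i)$ (subtract twice a degree-halving divisor to land in the divisible group $\Jac(B_i)$, then divide by $2$ there). Hence $[D_\calC]\in 2\Pic(B)$, so $D_\calC-2E$ is principal for some $E\in\Div(B)$; taking $\tilde\ell_\calC$ with $\divv(\tilde\ell_\calC)=D_\calC-2E$ yields $\divv(\tilde\ell_\calC)\equiv D_\calC\equiv\divv(\calC)\pmod{2\Div(B)}$.

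Part (3) is the substantive claim. For the \emph{only if} direction, if such $\tilde\ell$ exists then $\nu_*\divv(\tilde\ell)$ is the pushforward of a principal divisor and hence has degree $0$ — in particular even degree — on every connected component of $B^0$; reducing the asserted congruence modulo $2$ on degrees then forces $n_c(\frakS\cap B^0)+n_1 B^0_{w_1}$ to have even degree on each connected component $B^0_\alpha$. Identifying these degrees with intersection numbers on $S$, namely $\deg_{B^0_\alpha}(\frakS\cap B^0)=\frakS\cdot B^0_\alpha$ and $\deg_{B^0_\alpha}(B^0_{w_1})=S_{w_1}\cdot B^0_\alpha$, turns this into the stated condition that $n_c\frakS+n_1 S_{w_1}$ meets every connected component of $B^0$ with even degree. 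For the \emph{if} direction, set $D:=n_c(\frakS\cap B^0)+n_1 B^0_{w_1}\in\Div(B^0)$ and choose a lift $\hat D\in\Div(B)$ with $\nu_*\hat D=D$ (possible because $\nu$ is the surjective birational normalization, so each point of $B^0$ has a degree-$1$ preimage). By hypothesis $\nu_*\hat D=D$ has even degree on every connected component of $B^0$, so Lemma~\ref{lem:PrincipalDivisors}, implication $(1)\Rightarrow(3)$, produces a principal divisor $\divv(\tilde\ell)$ with $\nu_*(\hat D-\divv(\tilde\ell))\in 2\Div(B^0)$, whence $\nu_*\divv(\tilde\ell)\equiv\nu_*\hat D=D\pmod{2\Div(B^0)}$, as required. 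The main obstacle is precisely this last direction: all the genuine homological content is packaged inside Lemma~\ref{lem:PrincipalDivisors}, and the only new points are the choice of the lift $\hat D$ and the matching of pushforward degrees with intersection numbers on $S$, both of which are routine once the lemma is in hand.
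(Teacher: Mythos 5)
Your proposal is correct and follows essentially the same route as the paper, whose proof is a one-line appeal to the definition of $\Jac(B)[2]$ for part (1) and to Lemma~\ref{lem:PrincipalDivisors} (whose content is precisely the $2$-divisibility of the Jacobian plus the diagram~\eqref{eq:homologysquare}) for parts (2) and (3). You have merely filled in the routine details — the lift $\hat D$ along the normalization, the degree bookkeeping, and the identification of pushforward degrees with intersection numbers — that the paper leaves implicit.
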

        \begin{proof}
            The first statement follows from the definition of $\Jac(B)[2]$, and the second and third follow from Lemma~\ref{lem:PrincipalDivisors}.
        \end{proof}

        \begin{defn}
            For all $D \in \Div(B)$ such that $[D]\in\Jac(B)[2]$ and all $\calC\in \HH_1(\Gamma)$, we let $\ell_D$ and $\ell_{\calC}$ denote the images of $\tilde\ell_D$ and $\tilde\ell_\calC$, respectively, under the map $\operatorname{pr}:\kk(B)^{\times}\to L^{\times}$.  We define $\ell_c, \ell_1,$ and $\ell_{c,1}$ similarly, if $\tilde\ell_c$, $\tilde\ell_1$, and $\tilde\ell_{c,1}$ exist.
        \end{defn}

        %%%%%%%%%%%%%%%%%%%%%%%%%%%%%%%%%%%%%%%%%%%%%%%%%%%%%%%%%%%%%%%%%%%%%%%%
        \subsection{The filtration on $L_{\calE}$}%%%%%%%%%%%%%%%%%%%%%%%%%%%%%%
        %%%%%%%%%%%%%%%%%%%%%%%%%%%%%%%%%%%%%%%%%%%%%%%%%%%%%%%%%%%%%%%%%%%%%%%%

            In this section, we will prove a strengthened version of the proposition in the introduction.
    		\begin{prop}\label{prop:Filtration}
    			The group $L_\calE$ admits a filtration
    			\[
                    0 \subset G_4 \subset G_3 \subset G_2 \subset G_1 \subset G_0 = L_\calE\,,
    			\]
    			where 
    			\begin{enumerate}
     				\item $G_4 = L^{\times2}$,
    				\item the map $D\mapsto \ell_D$ induces an isomorphism $\Jac(B)[2]\simeq G_3/G_4$,
    				\item the map $\calC\mapsto \ell_\calC$ induces an isomorphism $H_1(\Gamma)\simeq G_2/G_3$,
    				\item $G_1/G_2\isom(\Z/2)^{m_1}$, where $m_1\in \{0,1\}$, and is generated by $\ell_1$ (if it exists), and
				\item $G_0/G_1\isom(\Z/2)^{m_c}$, where $m_c \in \{0,1\}$, and is generated by $\ell_c,\ell_{c,1}$ (if either exists).
    			\end{enumerate}
                Furthermore, the map $\divv\circ \Norm_{L/K}: L^\times \to \Div(W)$ induces an isomorphism 
			\[
				\divv\circ\Norm_{L/K} : L_\calE/G_1 \isom \langle m_c\divv(c) \rangle \subset \Div(W)\otimes \Z/2\Z, 
			\]
			and
        		\begin{align*}
        			m_1 &= 
        				\begin{cases}
        					1 & \textup{if } \deg(B^0_{w_1}) = \vec{0},\\
        					0 & \textup{otherwise,} 
        				\end{cases}\\
        			m_c &=
        				\begin{cases}
        					1 & \textup{if } \divv(c) \notin 2\Div(W)\textup{ and }\deg(\frakS\cap B^0) \in \left\{ \vec{0},\; \deg(B^0_{w_1}) \right\},\\
        					0 & \textup{otherwise.} 
        				\end{cases}
			\end{align*}
			
    		\end{prop}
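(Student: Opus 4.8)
\emph{Strategy.} The plan is to realize the filtration as the chain of kernels of three successive pushforward maps on $L_\calE$, and then to identify each quotient with the image of the relevant map, matching the generators supplied by Proposition~\ref{prop:Existence}. Writing $\nu\colon B\to B^0$ for the normalization and $p\colon B^0\to W$ for the structure map, consider
\[
d\colon L_\calE\To\Div(\Bfl)\otimes\Z/2,\quad \ell\mapsto\divv(\ell)\bmod 2,
\]
together with its pushforwards $\nu_*\circ d\colon L_\calE\to\Div(B^0)\otimes\Z/2$ and $p_*\circ\nu_*\circ d=\divv\circ\Norm_{L/K}\colon L_\calE\to\Div(W)\otimes\Z/2$. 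Since these factor through one another, setting $G_4=L^{\times2}$, $G_3=\ker d$, $G_2=\ker(\nu_*d)$ and $G_1=\ker(\divv\circ\Norm_{L/K})$ produces a nested chain $G_4\subseteq G_3\subseteq G_2\subseteq G_1\subseteq G_0=L_\calE$. It then remains to compute the four quotients as images and to check exhaustiveness.

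\emph{The two bottom quotients.} For $G_3/G_4$ I would run the standard Kummer argument on the smooth curve $B$ over the separably closed field $k$: each $\ell\in\ker d$ satisfies $\divv(\ell)=2D$ with $D$ of degree $0$ on every component, so $[D]\in\Pic^0(B)[2]=\Jac(B)[2]$ (the rational vertical components contributing nothing), and $\ell\in L^{\times2}$ precisely when $D$ is principal; the inverse is $D\mapsto\ell_D$ of Proposition~\ref{prop:Existence}(1), giving $\Jac(B)[2]\cong G_3/G_4$. For $G_2/G_3$ I would read off from the left-hand column of diagram~\eqref{eq:homologysquare} that $d(G_2)$ lands in $\calP\cap\ker\nu_*=\divv\bigl(H_1(\Gamma)\bigr)$, while Proposition~\ref{prop:Existence}(2) shows every class of $H_1(\Gamma)$ is realized by some $\ell_\calC$; hence $\calC\mapsto\ell_\calC$ induces $H_1(\Gamma)\cong G_2/G_3$.

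\emph{The two top quotients and the norm.} Here the generators $\ell_c,\ell_1,\ell_{c,1}$ exist exactly under the degree conditions of Proposition~\ref{prop:Existence}(3): $\ell_1$ exists iff $\deg(B^0_{w_1})=\vec0$, and an element realizing the $\frakS$-direction exists iff $\deg(\frakS\cap B^0)\in\{\vec0,\deg(B^0_{w_1})\}$. The decisive input is the behaviour along the section at infinity: because the model is $y^2=cf(x)$ with $f$ monic of degree $2g(C)+2$ and the ramification avoids $\infty$, the cover over $\frakS$ is $\tilde y^2=c$, which is branched over $w\in W$ precisely when $v_w(c)$ is odd. Thus $\frakS\cap B^0$ is a single reduced point over each such $w$. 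Since a degree-zero divisor on a rational vertical component of $B$ pushes forward to $0$ on $W$, the map $p_*\nu_*\divv$ equals $\divv\circ\Norm_{L/K}$, and applying it gives
\[
\divv\circ\Norm_{L/K}(\ell_c)=p_*(\frakS\cap B^0)=\divv(c)\bmod 2,\qquad p_*(B^0_{w_1})=(2g(C)+2)[w_1]\equiv 0.
\]
Hence $\divv\circ\Norm_{L/K}$ kills $G_1$ and carries $\ell_c$ (or $\ell_{c,1}$) to $\divv(c)$, so its image is $\langle m_c\divv(c)\rangle$ with $m_c=1$ exactly when $\divv(c)\notin 2\Div(W)$ and the $\frakS$-direction is realizable; this yields the formula for $m_c$ together with $L_\calE/G_1\cong\langle m_c\divv(c)\rangle$. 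The quotient $G_1/G_2$ is then generated by $\ell_1$, which is nontrivial modulo $G_2$ since $[B^0_{w_1}]\neq0$ (the fibre being reduced and nonempty), giving $m_1=1$ iff $\ell_1$ exists, i.e. iff $\deg(B^0_{w_1})=\vec0$.

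\emph{Main obstacle.} The genuine difficulty is not in the two bottom quotients but in controlling the projection $\operatorname{pr}\colon\kk(B)^\times\to L^\times$ alongside the vertical components of $B^0$. Proposition~\ref{prop:Existence} produces functions on all of $B$, whereas $L_\calE$ only retains their horizontal part, and the divisors $\divv(\calC)$ of graph cycles and the realizing functions $\tilde\ell_c,\tilde\ell_1$ may be supported at preimages of singular points lying on vertical components, which $\operatorname{pr}$ discards. Establishing that $\calC\mapsto\ell_\calC$ and $D\mapsto\ell_D$ are genuinely well-defined bijections onto the graded pieces, and that the filtration is exhaustive, therefore demands a careful chase through diagram~\eqref{eq:homologysquare}, simultaneously tracking the components of $B$, of $B^0$, and the graph $\Gamma$. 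The second, more computational obstacle is the norm identity above: beyond producing the formula for $m_c$, it shows $[\frakS\cap B^0]\neq0$ in $\Div(B^0)\otimes\Z/2$ precisely when $\divv(c)\notin 2\Div(W)$, and it is exactly this compatibility that prevents the $\frakS$- and $w_1$-directions from contributing two independent classes to one quotient, keeping $m_c,m_1\in\{0,1\}$.
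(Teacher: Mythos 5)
Your overall strategy --- filter by successively coarser divisor conditions and identify the graded pieces using the functions supplied by Proposition~\ref{prop:Existence} --- is the right one, and your treatment of $G_3/G_4$, of $G_1$, and of the norm computation giving $m_c$ and $L_\calE/G_1$ is essentially the paper's. But there is a genuine gap at the middle of the filtration, and it is precisely the one you flag at the end without resolving. You define $G_2=\ker(\nu_*\circ d)$ with $d$ the divisor map on $L_\calE$, i.e.\ $G_2=\{\ell\in L_\calE:\nu_*\divv(\ell)\in 2\Div(\BOfl)\}$. When $B^0$ has vertical components through its singular locus this is \emph{strictly smaller} than the correct $G_2$ (the paper's closing remark of the section states exactly this non-equality): a cycle $\calC\in H_1(\Gamma)$ passing through a vertex of $\Gamma$ corresponding to a vertical component has $\divv(\calC)$ supported partly off $\Bfl$, so the horizontal part $\ell_\calC=\operatorname{pr}(\tilde\ell_\calC)$ satisfies $\nu_*\divv(\ell_\calC)\notin 2\Div(B^0)$ even though $\nu_*\divv(\tilde\ell_\calC)\in 2\Div(B^0)$. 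With your definitions the map $\calC\mapsto\ell_\calC$ does not even land in $G_2$, so statement (3) fails, and these classes instead inflate $G_1/G_2$, breaking statement (4) as well. Your argument is complete only in the case $B^0=\BOfl$.

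The missing idea is structural rather than a "careful chase": the paper first proves the entire filtration for $\kk(B)_\calE$ (Proposition~\ref{prop:FiltrationOnB}), where the conditions $\divv(\tilde\ell)\in 2\Div(B)$ and $\nu_*\divv(\tilde\ell)\in 2\Div(B^0)$ see all of $B$ and the identifications with $\Jac(B)[2]$ and $H_1(\Gamma)$ follow cleanly from~\eqref{eq:homologysquare} and Lemma~\ref{lem:PrincipalDivisors}; it then \emph{defines} $G_i$ as the image of $\tilde G_i$ under $\operatorname{pr}$ and transfers everything through Lemma~\ref{lem:prBBfl}, which shows that $\operatorname{pr}$ induces an isomorphism $\kk(B)_\calE/\kk(B)^{\times 2}\isom L_\calE/L^{\times 2}$ (the nontrivial point being that an element of $\kk(B)_\calE$ whose horizontal part is a square must have square vertical parts, since the $\nu(T_i)$ are disjoint from $B^0_{w_1}$ and meet $\frakS$ in one point) and commutes with pushforward to $W$. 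To repair your proof you would need to replace your kernel definitions of $G_2$ and $G_3$ by these images and supply the injectivity statement of Lemma~\ref{lem:prBBfl}; as written, the filtration you construct is not the one in the statement.
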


            We first prove a similar proposition for $\kk(B)_\calE$.
    		\begin{prop}\label{prop:FiltrationOnB}
    			The group $\kk(B)_\calE$ admits a filtration
    			\[
                    0 \subset \tilde G_4 \subset \tilde G_3 \subset \tilde G_2 \subset \tilde G_1 \subset \tilde G_0 = \kk(B)_\calE\,,
    			\]
    			where 
    			\begin{enumerate}
    				\item $\tilde G_4 = \kk(B)^{\times2}$,
    				\item $\tilde G_3 = \left\{\tilde\ell\in \kk(B)^{\times}:\divv(\tilde\ell)\in 2\Div(B)\right\}$ and $\Jac(B)[2]\stackrel{\sim}{\To}\tilde G_3/\tilde G_4$, where $[D]\mapsto \tilde\ell_D$,
    				\item $\tilde G_2 = \left\{\tilde\ell\in \kk(B)^{\times}:
                        \nu_*\divv(\tilde\ell)\in
                                    2\Div(B^0)\right\}$ 
 and $H_1(\Gamma)\stackrel{\sim}{\To}\tilde G_2/\tilde G_3$, where $\calC\mapsto \tilde\ell_\calC$, 
                % \item $\tilde{G}_1/\tilde{G}_2$ is cyclic, generated by $\tilde\ell_1$ (if it exists), and
                \item $\tilde{G}_1=\left\{\tilde\ell\in \kk(B)^{\times}:
                        \psi(\ell)\in \langle  B^0_{w_1}\rangle\subset
                                    \Div(B^0)\otimes\Z/2\right\}$ and $\tilde G_1/\tilde G_2$ is cyclic, generated by $\tilde\ell_1$ (if it exists), and
				\item  $\tilde G_0/\tilde G_1$ is cyclic, generated by either $\tilde\ell_c$ or $\tilde\ell_{c,1}$ (if they exist).
    			\end{enumerate}

    		\end{prop}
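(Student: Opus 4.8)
The plan is to treat each $\tilde G_i$ as the preimage under $\psi$ (or under $\divv$ reduced mod $2$) of an explicit subgroup of $\Div(B^0)\otimes\Z/2\Z$ (respectively of $\Div(B)\otimes\Z/2\Z$), so that each successive quotient injects into the corresponding quotient of \emph{target} subgroups, with surjectivity supplied by Proposition~\ref{prop:Existence}. Throughout I would use the factorization $\psi(\tilde\ell)=\nu_*\!\left(\divv(\tilde\ell)\bmod 2\right)$. First I would record the inclusions: since $\divv(\tilde\ell)\in 2\Div(B)$ forces $\nu_*\divv(\tilde\ell)\in 2\Div(B^0)$, which forces $\psi(\tilde\ell)=0\in\langle B^0_{w_1}\rangle\subseteq\langle\frakS\cap B^0,B^0_{w_1}\rangle$, the chain $\tilde G_4\subseteq\tilde G_3\subseteq\tilde G_2\subseteq\tilde G_1\subseteq\tilde G_0$ is immediate from the descriptions of the $\tilde G_i$; part (1) is the definition of $\tilde G_4$.

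For the two top quotients (parts (4) and (5)) I would note that $\psi$ restricts to maps $\tilde G_1\to\langle B^0_{w_1}\rangle$ and $\tilde G_0\to\langle\frakS\cap B^0,B^0_{w_1}\rangle$ with kernels exactly $\tilde G_2$ and $\tilde G_1$. Thus $\psi$ induces injections
\[
\tilde G_1/\tilde G_2\hookrightarrow\langle B^0_{w_1}\rangle,\qquad
\tilde G_0/\tilde G_1\hookrightarrow\langle\frakS\cap B^0,B^0_{w_1}\rangle/\langle B^0_{w_1}\rangle.
\]
Both targets are cyclic of order at most $2$, so both quotients are cyclic of order at most $2$; a generator of the target is in the image precisely when the relevant function of Proposition~\ref{prop:Existence}(3) exists, namely $\tilde\ell_1$ for the first and $\tilde\ell_c$ or $\tilde\ell_{c,1}$ for the second. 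This gives (4) and (5).

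For part (2) I would send $\tilde\ell\in\tilde G_3$ to the class of $D:=\tfrac12\divv(\tilde\ell)$, well defined because $\Div(B)$ is torsion-free. Being half a principal divisor, $D$ has degree $0$ on each component of $B$ and satisfies $2[D]=0$, so $[D]\in\Jac(B)[2]$, and the resulting homomorphism is surjective by Proposition~\ref{prop:Existence}(1). Its kernel consists of $\tilde\ell$ with $\tfrac12\divv(\tilde\ell)=\divv(g)$ principal, i.e.\ $\tilde\ell/g^2$ a global unit; since $B$ is a union of proper integral curves over the separably closed field $k$, so that $k^\times=k^{\times 2}$ as $\Char k\neq 2$, this unit is a square and $\tilde\ell\in\kk(B)^{\times2}=\tilde G_4$. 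Hence $\tilde G_3/\tilde G_4\cong\Jac(B)[2]$.

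The hard part will be part (3), where the commutative diagram~\eqref{eq:homologysquare} does the work. For $\tilde\ell\in\tilde G_2$ the class $\divv(\tilde\ell)\bmod 2$ lies in $\calP$ (degree $0$ on each component, being principal) and is killed by $\nu_*$ (the defining condition of $\tilde G_2$); by exactness of the left column of~\eqref{eq:homologysquare}, $\ker(\nu_*\colon\calP\to\calP^0)$ is the image of the injection $\divv\colon H_1(\Gamma)\hookrightarrow\calP$, so there is a unique $\calC\in H_1(\Gamma)$ with $\divv(\calC)=\divv(\tilde\ell)\bmod 2$. The assignment $\tilde\ell\mapsto\calC$ is a homomorphism $\tilde G_2\to H_1(\Gamma)$ with kernel $\{\tilde\ell:\divv(\tilde\ell)\in 2\Div(B)\}=\tilde G_3$, and it is surjective because the function $\tilde\ell_\calC$ of Proposition~\ref{prop:Existence}(2) satisfies $\divv(\tilde\ell_\calC)\equiv\divv(\calC)\pmod{2\Div(B)}$ and, since $\divv(\calC)\in\ker(\nu_*|_{\calP})$, actually lies in $\tilde G_2$ and maps to $\calC$. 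This yields $\tilde G_2/\tilde G_3\cong H_1(\Gamma)$ and completes the filtration. I expect the genuine subtlety to be isolating the cycle class cleanly from~\eqref{eq:homologysquare} — in particular checking that $\divv$ is injective on $H_1(\Gamma)$ (which follows from the exactness of the left column, ultimately because the edges over a fixed singular point form a simple path) and that the representatives $\tilde\ell_\calC$ genuinely land in $\tilde G_2$; the remaining bookkeeping is routine.
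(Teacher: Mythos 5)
Your proposal is correct and follows essentially the same route as the paper: the containments are immediate from the definitions, part (2) comes from the definition of $\Jac(B)[2]$ together with the fact that global units on the proper components of $B$ over the separably closed field $k$ are squares, part (3) is extracted from the exactness of the left column of diagram~\eqref{eq:homologysquare} with surjectivity supplied by Proposition~\ref{prop:Existence}, and parts (4) and (5) follow from the induced maps $\psi\colon\tilde G_1\to\langle B^0_{w_1}\rangle$ and $\psi\colon\tilde G_0\to\langle\frakS\cap B^0,B^0_{w_1}\rangle$ with kernels $\tilde G_2$ and $\tilde G_1$. Your write-up is somewhat more explicit than the paper's (which compresses (3) into a citation of~\eqref{eq:homologysquare} and Lemma~\ref{lem:PrincipalDivisors}), but there is no substantive difference in the argument.
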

            \begin{proof}
                The containments are clear.  It remains to prove the claims about the isomorphisms.  The isomorphism in $(2)$ follows immediately from the definition of $\Jac(B)[2]$.  The isomorphism in $(3)$ follows from~\eqref{eq:homologysquare} and Lemma~\ref{lem:PrincipalDivisors}.  
            
                The map $\psi$ gives a commutative diagram of exact sequences
                \[\xymatrix{
                    0\ar[r]& \tilde{G}_2 \ar[r]\ar@{=}[d]& \tilde{G}_1 \ar[r]^(.35){\psi}\ar@{^{(}->}[d] &\langle  B^0_{w_1}\rangle \ar@{^{(}->}[d] %&\subset \Div(B^0)\otimes\Z/2,
                    \\
                    0\ar[r] &\tilde{G}_2 \ar[r] &\tilde{G}_0 \ar[r]^(.35){\psi} &\langle \frakS\cap B^0, B^0_{w_1}\rangle, %& \subset \Div(B^0)\otimes\Z/2\\
                }\]
                which proves $(4)$ and $(5)$.
                % The map $\psi$ gives an exact sequence
                % \[
                %     0\to \tilde{G}_2 \to \tilde{G}_0 \stackrel{\psi}{\To} \langle \frakS\cap B^0, B^0_{w_1}\rangle \subset \Div(B^0)\otimes\Z/2.
                % \]
                % We note that since $X^0_{w_1}$ is reduced and irreducible, $B^0_{w_1}$ and $\frakS\cap B^0 + B^0_{w_1}$ are nonzero in $\Div(B^0)\otimes\Z/2$.
            \end{proof}

		\begin{lemma}\label{lem:prBBfl}
			The map $\operatorname{pr}:\kk(B) \to L$ induces an isomorphism, $\kk(B)_{\calE}/\kk(B)^{\times2} \isom L_\calE/L^{\times 2}$ and satisfies
			\[
				\varpi_*\circ\nu_*\circ\divv = \varpi_*\circ\nu_*\circ\divv\circ\operatorname{pr}.
			\]
		\end{lemma}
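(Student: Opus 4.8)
The plan is to work through the decomposition $\kk(B)^{\times} = L^{\times}\times\prod_{i=1}^{n} k(x)^{\times}$ arising from $B = \Bfl \sqcup \bigsqcup_{i=1}^{n} V_i$, where $V_1,\dots,V_n$ are the vertical irreducible components of $B^0$. Each $V_i$ is rational ($\kk(V_i) = k(x)$), $\nu$ restricts to an isomorphism on it, and $\varpi$ contracts it to a single closed point $w_i\in W$; correspondingly $\Div(B) = \Div(\Bfl)\oplus\bigoplus_i \Div(V_i)$. Under this splitting $\operatorname{pr}$ is projection onto the first factor, so for $\tilde\ell=(\ell,g_1,\dots,g_n)$ one has $\divv(\operatorname{pr}(\tilde\ell))=\divv(\ell)$, the $\Div(\Bfl)$-component of $\divv(\tilde\ell)$, and $\divv(\tilde\ell)-\divv(\operatorname{pr}(\tilde\ell))=\sum_i\divv(g_i)$ is supported on the $V_i$.

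For the displayed identity I would note that each $\divv(g_i)$ is a principal divisor on the complete curve $V_i$, and that $\varpi\circ\nu$ contracts $V_i$ to $w_i$. Since proper pushforward of a principal divisor along a non-dominant morphism vanishes (a degree-zero $0$-cycle supported in a fibre pushes forward to $0$), $\varpi_*\nu_*\divv(g_i)=0$ for all $i$; summing gives $\varpi_*\nu_*\divv(\tilde\ell)=\varpi_*\nu_*\divv(\operatorname{pr}(\tilde\ell))$, and in fact this holds integrally, not only modulo $2$.

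For the isomorphism, well-definedness and surjectivity are immediate from $\operatorname{pr}(\kk(B)^{\times2})\subseteq L^{\times2}$ and the definition $L_\calE:=\operatorname{pr}(\kk(B)_\calE)$; the work is injectivity, i.e. $\ker(\operatorname{pr})\cap \kk(B)_\calE\subseteq \kk(B)^{\times2}$. Given $\tilde\ell\in\kk(B)_\calE$ with $\operatorname{pr}(\tilde\ell)\in L^{\times2}$, I would multiply by a square to arrange $\tilde\ell=(1,g_1,\dots,g_n)$, so that $\psi(\tilde\ell)=\sum_i(\divv(g_i)\bmod 2)$ is supported on the $V_i$. Projecting the membership $\psi(\tilde\ell)\in\langle \frakS\cap B^0,\,B^0_{w_1}\rangle$ onto the summand $\Div(V_i)\otimes\Z/2$, the generator $B^0_{w_1}$ contributes nothing: because $X^0_{w_1}$ is reduced we have $S_{w_1}\not\subseteq B^0$, so no vertical component lies over $w_1$ and $B^0_{w_1}$ is disjoint from $V_i$. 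The section $\frakS$ meets $S_{w_i}$ in one point; when that point lies on $V_i$ it is the $k$-rational point $(\infty,w_i)$, of degree $1$. Hence the projection of $\langle \frakS\cap B^0, B^0_{w_1}\rangle$ to $\Div(V_i)\otimes\Z/2$ is generated by a single point of degree $1$, while $\divv(g_i)\bmod 2$ lies in this group and has even degree; therefore $\divv(g_i)\bmod 2 = 0$, i.e. $\divv(g_i)\in 2\Div(V_i)$. As $V_i$ is rational and $k$ is separably closed of characteristic different from $2$ (so $k^{\times}=k^{\times2}$), every such $g_i$ is a square, whence $\tilde\ell\in\kk(B)^{\times2}$.

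The main obstacle is precisely the vertical part of $\frakS\cap B^0$: a priori a kernel element could have $\psi(\tilde\ell)$ equal to the class of the points $(\infty,w_i)$ on the vertical branch components, which is not the class of a square divisor. The parity computation above—comparing the odd degree of $(\infty,w_i)$ against the even degree of a principal divisor, after discarding the horizontal generator $B^0_{w_1}$—is what eliminates this possibility and constitutes the crux of the argument. The same projection handles reducible fibres: if $\frakS$ meets a component of $S_{w_i}$ other than $V_i$, then both generators project to $0$ in $\Div(V_i)\otimes\Z/2$ and $g_i$ is a square outright.
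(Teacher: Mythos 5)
Your overall strategy---reduce injectivity to showing that the vertical coordinates of a kernel element are squares, then rule out non-squares by comparing the even degree of a principal divisor against the odd degree of the single point where $\frakS$ meets the fibre (after discarding $B^0_{w_1}$, which misses the vertical part), together with the degree-zero pushforward for the displayed identity---is the same as the paper's, and your treatment of surjectivity, well-definedness, and the second assertion is fine. The gap is in the injectivity step: you treat the vertical irreducible components $V_1,\dots,V_n$ of $B^0$ as if their images $\nu(V_i)\subset B^0$ were pairwise disjoint, so that ``projecting $\psi(\tilde\ell)$ onto $\Div(V_i)\otimes\Z/2$'' recovers $\nu_*\divv(g_i)\bmod 2$. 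That is automatic when $S$ is geometrically ruled (a vertical component is then an entire fibre $\PP^1$), but the lemma is stated, and is needed for the Enriques application, for ruled surfaces whose singular fibres are trees of rational curves; there two vertical components of $B^0$ inside one fibre may meet at a point $P$, and the coefficient of $P$ in $\psi(\tilde\ell)$ is $v(g_i)+v(g_j)$, not $v(g_i)$. Your projection therefore does not isolate $\nu_*\divv(g_i)$, and ``$\divv(g_i)\bmod 2=0$'' does not follow component by component: in a chain $V_1\cup V_2\cup V_3$ one can choose non-square $g_i$ whose odd-order zeros sit at the nodes and partially cancel under $\nu_*$. Your closing remark about reducible fibres only addresses where $\frakS$ lands, not these mutual intersections.

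The paper closes exactly this gap by grouping the vertical components into the connected configurations $\nu(T_i)$ (pairwise disjoint trees of genus-zero curves, each contained in a single fibre) and proving that if some coordinate of the vertical tuple on $T_i$ is a non-square, then $\nu_*\divv(\tilde\ell)$ still has odd valuation at \emph{at least two distinct points} of $\nu(T_i)$; this uses that the dual graph of a tree has no $1$-cycles (the same $H_1(\Gamma)=0$ mechanism as in Lemma~\ref{lem:PrincipalDivisors}), so the odd-order zeros cannot all cancel at the nodes. Two odd points within one fibre then contradict membership of $\psi(\tilde\ell)$ in $\langle\frakS\cap B^0,\,B^0_{w_1}\rangle$, since $\frakS$ meets $\nu(T_i)$ in only one point and $B^0_{w_1}$ is disjoint from it. If you add the hypothesis that the vertical components of $B^0$ are pairwise disjoint (in particular whenever $S$ is geometrically ruled), your argument is complete and coincides with the paper's; in general you need this tree-by-tree refinement.
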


		\begin{proof}
				Decompose $B$ as a disjoint union, 
		\[
			B = \Bfl \cup \left(\bigcup_{i = 1}^r \bigcup_{j = 1}^{s_i} F_{i,j}\right) = \Bfl \cup \left(\bigcup_{i = 1}^r T_i\right)\,,
		\] 
		where the $F_{i,j}$ are the dimension one irreducible components of $B$ that do not map dominantly to $W$, $T_i = F_{i,1} \cup \dots \cup F_{i,s_i}$, and the indices are arranged so that the $\nu(T_i)\subset B^0$ are pairwise disjoint, connected trees of genus zero curves. 

		Now suppose $\tilde\ell := (\ell^2, f_1, \dots, f_r) \in \kk(B)_{\calE}$, with $f_i \in \kk(T_i)^\times$ at least one of which is not a square. A function $f_i \in \kk(T_i)^\times$ is not a square if and only if $\nu_*\divv(f_i) \subset \Div(B^0)$ has odd valuation on at least two distinct points of $\nu(T_i)$. So, since the $\nu(T_i)$ are disjoint,
		\[
                	\nu_*(\divv(\tilde\ell)) = 2\nu_*(\divv(\ell)) + \nu_*(\divv(f_1)) + \dots + \nu_*(\divv(f_r))
            	\]
		has odd valuation on at least two distinct points of $B^0$ within a single fiber.
		
		On the other hand, by definition of $\kk(B)_\calE$, $\nu_*(\divv(\tilde\ell))\in \langle \frakS\cap B^0, B^0_{w_1}\rangle$.  Since $B^0_{w_1}$ is disjoint from $\nu(T_i)$ for all $i$ and $\frakS$ meets $\nu(T_i)$ in exactly $1$ point, this leads to a contradiction. Thus all of the $f_i$ are squares and this the first statement in the lemma.
	
		The second statement follows from the fact that if $f \in \kk(F_{i,j})^\times$ is a function on an irreducible vertical component of $B$, then $\varpi_*\nu_*\divv(f) = \deg(f)\cdot w = 0$, for some $w \in W$.
		\end{proof}
                        
        \begin{proof}[Proof of Proposition~\ref{prop:Filtration}]
            %We claim that the projection map $\kk(B)^{\times}/\kk(B)^{\times2}\to L^{\times}/L^{\times2}$ is injective on $\kk(B)_{\calE}/\kk(B)^{\times2}$. This will be proven below; first we use it to prove the proposition.
	
	    Set $G_i$ to be the image of $\tilde G_i$ in $L^\times$. Statements (1)--(5)
		%Seting $G_i$ to be the image of $\tilde G_i$ in $L^\times$ gives the filtration, and statements (1)--(5)
	 follow immediately from Proposition~\ref{prop:FiltrationOnB} and Lemma~\ref{lem:prBBfl}. Note that since $X^0_{w_1}$ is reduced and irreducible, $B^0_{w_1}$ and $\frakS\cap B^0 + B^0_{w_1}$ are nonzero in $\Div(B^0)\otimes\Z/2$.
Together with the image of $\psi$, which is determined by Proposition~\ref{prop:Existence}, this is enough to yield the formula for $m_1$
	 and deduce that
		\[
			m_c = \begin{cases}
					1 & \textup{if } \frakS\cap B^0\not\in 2\Div(B^0)\textup{ and }\deg(\frakS\cap B^0) \in \left\{ \vec{0},\; \deg(B^0_{w_1}) \right\},\\
					0 & \textup{otherwise.} 
				\end{cases}
		\]
		Since $\varpi_*(\frak{S}\cap B^0) = \divv(c)$ and $\frak{S}$ is a section, we see that
		\[
			\frak{S}\cap B^0 \in 2\Div(B^0) \quad \Leftrightarrow \quad \divv(c) \in 2\Div(W)\,,
		\]
		which gives the formula for $m_c$. The claim regarding $\divv\circ\Norm_{L/K}$ follows easily from Lemma~\ref{lem:prBBfl} and the fact that $B_{w_1}^0$ has even degree.
        \end{proof}
        \begin{remark}
		Note that when $\BOfl \ne B^0$ the inclusion $\{\ell\in L^{\times}:\nu_*(\divv(\ell)) \in 2\Div(\BOfl) \} \subset G_2$ is not an equality, in general. This is a manifestation of the \emph{non}-commutativity of the diagram
    		\[\xymatrix{
    			\Div(B)\ar^{\nu_*}[d]\ar[r] & \Div(\Bfl)\ar^{\nu_*}[d] \\
    			\Div(B^0)\ar[r] & \Div(\BOfl) \\
			}
    		\]
            where the horizontal maps are the natural projections.
        \end{remark}
            
%%%%%%%%%%%%%%%%%%%%%%%%%%%%%%%%%%%%%%%%%%%%%%%%%%%%%%%%%%%%%%%%%%%%%%%%%%%%%%%
%%%%%%%%%%%%%%%%%%%%%%%%%%%%%%%%%%%%%%%%%%%%%%%%%%%%%%%%%%%%%%%%%%%%%%%%%%%%%%%%

	%%%%%%%%%%%%%%%%%%%%%%%%%%%%%%%%%%%%%%%%%%%%%%%%%%%%%%%%%%%%%%%%%%%%%%%%%%%%
	\section{The presentation of $\Br X[2]$}\label{sec:BrX2presentation}%%%%%%%%
	%%%%%%%%%%%%%%%%%%%%%%%%%%%%%%%%%%%%%%%%%%%%%%%%%%%%%%%%%%%%%%%%%%%%%%%%%%%%

            \edited{ We define $\frakL_{\calE} \subseteq \frakL$ to be the image of $L_{\calE}$ (defined in the previous section) under the projection map $L^{\times} \to L^{\times}/K^{\times}L^{\times2}$ and we set
 $\frakL_{c,\calE} := \frakL_c\cap\frakL_{\calE}.$}

            \begin{remark}\label{rmk:RationalS}
		\edited{When $S$ is rational (i.e., $W = \PP^1$), $\frakL_{c,\calE} = \frakL_\calE$. See Lemma~\ref{lem:LowerBound}}.
            \end{remark}

     	   The goal of this section is to prove the following theorem.
			
			\begin{thm}\label{thm:BrX2presentation}
				If $S$ is geometrically ruled, then there is an exact sequence
				\[
					\frac{\Pic C}{2\Pic C} \stackrel{x-\alpha}\To
					\frakL_{c,\calE} \stackrel{\gamma}{\To}
					\Br(X\setminus\calE)[2] \To 0\,.
				\]
			\end{thm}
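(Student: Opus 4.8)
The plan is to combine the presentation of $\Br C[2]$ from Theorem~\ref{thm:EvenHypThm} with the residue computations of Section~\ref{sec:UnramConditions} to carve out exactly the classes that extend to $X\setminus\calE$. Since $S$ is geometrically ruled, every fibre $S_w\cong\PP^1$ is smooth and irreducible, so purity identifies $\Br(X\setminus\calE)[2]$ with the subgroup of $\Br C[2]$ of classes unramified at every non-exceptional vertical divisor, and Corollary~\ref{cor:UnramifiedOutsideE} rephrases this as the valuation conditions~\eqref{eq:unram-irred} and~\eqref{eq:unram-red}. As $\gamma\colon\frakL_c\twoheadrightarrow\Br C[2]$ is surjective with kernel the image of $x-\alpha$, the whole theorem reduces to the single identity
\[
	\frakL_c\cap\{\overline\ell : \gamma'(\ell)\text{ is unramified at all non-exceptional vertical divisors}\}=\frakL_{c,\calE}\,.
\]
Granting this, surjectivity of $\gamma|_{\frakL_{c,\calE}}$ onto $\Br(X\setminus\calE)[2]$ is immediate, and exactness on the left is automatic because $\ker\gamma=\im(x-\alpha)$ lies in $\frakL_{c,\calE}$ (it maps to $0$), whence $\ker(\gamma|_{\frakL_{c,\calE}})=\im(x-\alpha)$.

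For the inclusion $\frakL_{c,\calE}\subseteq\{\text{unramified}\}$ --- the claim promised in Section~\ref{sec:Functions} --- I would check the conditions on the generators of the filtration of Proposition~\ref{prop:Filtration}, using that, by Lemma~\ref{lem:prBBfl} and the definition of $\kk(B)_\calE$, membership in $L_\calE/L^{\times2}$ amounts to $\nu_*\divv(\ell)\in\langle\frakS\cap B^0,B^0_{w_1}\rangle$ modulo $2$. Squares and the classes $\ell_D$ (with $\divv\tilde\ell_D=2D$) push forward to even divisors, and so do the $\ell_\calC$: each edge of $\Gamma$ joins two preimages of a \emph{single} singular point of $B^0$, so $\nu_*\divv(\calC)\in 2\Div(B^0)$ by construction; for all these $v_b(\ell)$ is even everywhere and both conditions hold at every fibre. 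The classes $\ell_c$ and $\ell_{c,1}$ have $\nu_*\divv$ supported, away from $B^0_{w_1}$, on $\frakS\cap B^0$, and those points are excluded from~\eqref{eq:unram-irred}--\eqref{eq:unram-red}, so only the $B^0_{w_1}$-part matters. The one non-formal point is $\ell_1$ at $w_1$, where $X^0_{w_1}$ is irreducible: there $v_b(\ell_1)\equiv e(b/w_1)\bmod 2$ because the multiplicity of $b$ in $B^0_{w_1}$ is exactly $e(b/w_1)$, so~\eqref{eq:unram-irred} degenerates to the tautology $e(b'/w_1)e(b/w_1)\equiv e(b/w_1)e(b'/w_1)$. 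This is precisely why $w_1$ must be chosen with $X^0_{w_1}$ reduced and irreducible.

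The reverse inclusion $\frakL_c\cap\{\text{unramified}\}\subseteq\frakL_\calE$ is the main obstacle, and it is where the norm condition defining $\frakL_c$ is indispensable. Given such an $\overline\ell$, the unramified conditions say exactly that, away from $\frakS$, $\nu_*\divv(\ell)\equiv\sum_w m_w B^0_w\bmod 2$ for integers $m_w$ (with $m_w$ even whenever $X^0_w$ is reducible). Multiplying $\ell$ by $a\in K^\times$ --- permissible since we work in $\frakL=L^\times/K^\times L^{\times2}$ --- shifts $\nu_*\divv(\ell)$ by $\sum_w\ord_w(a)\,B^0_w$, so landing in $\langle\frakS\cap B^0,B^0_{w_1}\rangle$ requires solving $\ord_w(a)\equiv m_w\bmod 2$ for all $w\neq w_1$ while keeping the $\frakS$-supported remainder absorbable into the \emph{full} divisor $\frakS\cap B^0$. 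Such an $a$ exists if and only if $\sum_{w\neq w_1}m_w\,w$ is, modulo $2\Div(W)$ and $\Z\cdot w_1$, principal on $W$; pushing the description of $\nu_*\divv(\ell)$ forward by $\varpi_*$ and comparing with $\varpi_*\nu_*\divv(\ell)=\divv_W(\Norm_{L/K}\ell)$ shows that $\overline{\Norm_{L/K}\ell}\in\langle\overline c\rangle$ forces exactly this, since $\varpi_*(\frakS\cap B^0)=\divv_W(c)$ is principal. This is the step that genuinely invokes $\Pic(W)$, and it explains Remark~\ref{rmk:RationalS}: over $\PP^1$ every even divisor is a square, so the norm condition imposes nothing beyond the unramified conditions. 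I expect the delicate bookkeeping to be showing that the $\frakS$-supported remainder is a multiple of $\frakS\cap B^0$ rather than an arbitrary combination of its points; this, too, should be controlled by the norm condition through the section identification $\frakS\cong W$.
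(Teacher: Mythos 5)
Your overall route is the paper's: reduce, via Theorem~\ref{thm:gammawelldefined} and Corollary~\ref{cor:UnramifiedOutsideE}, to the single identity $\gamma^{-1}\bigl(\Br(X\setminus\calE)[2]\bigr)=\frakL_{c,\calE}$ inside $\frakL_c$; verify the easy inclusion from the explicit description of $\frakL_\calE$ (you check the generators of the filtration of Proposition~\ref{prop:Filtration}, the paper instead uses the divisor-theoretic criterion of Lemma~\ref{lem:L_calE}; these are interchangeable because conditions~\eqref{eq:unram-irred} and~\eqref{eq:unram-red} are unchanged when $\ell$ is multiplied by an element of $K^\times$); and prove the hard inclusion by adjusting $\ell$ by some $a\in K^\times$ and pushing the divisor forward to $W$.

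There is, however, one genuine gap in your reverse inclusion. You assert that the unramified conditions force $\nu_*\divv(\ell)$, away from $\frakS$, to be congruent mod $2$ to $\sum_w m_w B^0_w$. That holds only at fibres with $S_w\not\subseteq B^0$. When a fibre $S_w$ is a component of the branch locus, Proposition~\ref{prop:UnramifiedConditions}(3) imposes \emph{no} condition, so $\divv(\ell)$ may involve an arbitrary combination $\sum_{P\in\BOfl_w}n_P P$ there; such terms cannot be absorbed by an element of $K^\times$, and membership in $\frakL_\calE$ (defined via $\kk(B)_\calE$, i.e.\ using the vertical components of $B$) instead requires the congruence $\sum_{P\in\BOfl_w}n_P\equiv n_c\bmod 2$ for each such $w$ (Lemma~\ref{lem:L_calE}). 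The paper extracts this congruence, together with the fact that the $\frakS$-supported remainder equals $n_c(\frakS\cap\BOfl)$ rather than an arbitrary combination of its points, from one push-forward computation: $\varpi_*\nu_*\divv(a\ell)\equiv n_c\divv(c)\equiv n_c\varpi_*(\frakS\cap B^0)\bmod{2\Div(W)}$, using that $\deg(\Bfl/W)$ is even and that $\varpi|_{\frakS}$ identifies $\frakS$ with $W$. Your closing paragraph correctly anticipates the second half of this bookkeeping but not the first, so your argument is complete only under the simplifying assumption $B^0=\BOfl$. A minor misattribution in the same step: the existence of $a\in K^\times$ absorbing the fibral part $\sum_w m_w B^0_w$ into $n_1 B^0_{w_1}$ needs only the $2$-divisibility of $\Jac(W)$, not the norm condition; the norm condition enters precisely, and only, in the push-forward argument just described.
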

			\begin{remark}
				\label{rem:GaloisAction}
				Suppose that $\pi\colon X\to S$ and the ruling on $S$ are defined over a subfield $k_\circ \subseteq k$ for which $k$ is a separable closure.  Then all abelian groups in Theorem~\ref{thm:BrX2presentation} have an action of $\Gal(k/k_\circ)$ and by Theorem~\ref{thm:EvenHypThm}, the maps in the exact sequence are morphisms of Galois modules.  The same statement holds for the corollaries below.
			\end{remark}

			\begin{cor}\label{cor:BrX2presentation}
				If $S$ is geometrically ruled and $B^0$ has at worst simple singularities, then there is an exact sequence
				\[
					\frac{\Pic C}{2\Pic C} \stackrel{x-\alpha}\To
					\frakL_{c,\calE} \stackrel{\gamma}{\To}
					\Br X[2] \To 0\,.
				\] 
			\end{cor}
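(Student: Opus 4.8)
The plan is to obtain this as an immediate consequence of Theorem~\ref{thm:BrX2presentation} together with the final assertion of Proposition~\ref{prop:-2curves}. Theorem~\ref{thm:BrX2presentation} already supplies the desired exact sequence with $\Br(X\setminus\calE)[2]$ in place of $\Br X[2]$, so the entire task reduces to identifying these two groups under the additional hypothesis that $B^0$ has at worst simple singularities.

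First I would observe that one inclusion is automatic. Since $X$ is smooth and $\calE$ consists of (vertical) curves, both $\Br X$ and $\Br(X\setminus\calE)$ sit inside $\Br \kk(X)$ via purity, as the classes unramified at all divisors, respectively at all divisors not in $\calE$. Hence $\Br X[2]\subseteq \Br(X\setminus\calE)[2]$ with no work. The reverse inclusion is exactly the content of the concluding sentence of Proposition~\ref{prop:-2curves}: when $S$ is geometrically ruled and $B^0$ has at worst simple singularities, every curve in $\calE$ is a $(-2)$-curve lying over a simple singularity of $X^0$, and the proposition then yields $\Br X[2]=\Br(X\setminus\calE)[2]$.

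With this equality in hand, I would simply substitute $\Br X[2]$ for $\Br(X\setminus\calE)[2]$ in the exact sequence of Theorem~\ref{thm:BrX2presentation}. Exactness, the descriptions of the maps $x-\alpha$ and $\gamma$, and the $\Gal(k/k_\circ)$-equivariance recorded in Remark~\ref{rem:GaloisAction} are all inherited verbatim, since the groups $\tfrac{\Pic C}{2\Pic C}$ and $\frakL_{c,\calE}$ and the maps between them are unchanged.

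There is no genuine obstacle remaining at this level: the substantive work lives inside Proposition~\ref{prop:-2curves}, whose proof already carries out the delicate local-to-global ramification analysis via the Kato (Bloch–Ogus) complex, propagating unramifiedness up the tree of $(-2)$-curves from the leaves inward. The only point demanding attention in the corollary itself is checking that its hypotheses are precisely those under which Proposition~\ref{prop:-2curves} delivers $\Br X[2]=\Br(X\setminus\calE)[2]$; once that is confirmed, the presentation follows formally.
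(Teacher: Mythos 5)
Your proposal is correct and matches the paper's argument exactly: the paper's proof of this corollary is the single line ``Apply Proposition~\ref{prop:-2curves},'' relying on its final assertion that $\Br X[2] = \Br(X\setminus\calE)[2]$ when $S$ is geometrically ruled and $B^0$ has at worst simple singularities, and then substituting into Theorem~\ref{thm:BrX2presentation}. Your additional remarks on the automatic inclusion $\Br X[2]\subseteq \Br(X\setminus\calE)[2]$ and Galois-equivariance are accurate elaborations of the same route.
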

			
			\begin{proof}
				Apply Proposition~\ref{prop:-2curves}.
			\end{proof}
			
			\begin{proof}[Proof of Theorem I]
				This follows almost directly from Corollary~\ref{cor:BrX2presentation}.  It remains to extend the map $x - \alpha$ to $\frac{\NS X}{2\NS X}$ and to check compatibility with the Galois action.  The latter follows from Theorem~\ref{thm:EvenHypThm}.  To extend $x - \alpha$, we note that since $k$ is separably closed of characteristic different from $2$, $\Pic^0 X \subseteq 2\Pic X$.  Therefore, the restriction map $\Pic X\to \Pic C$ induces a surjective map $\frac{\NS X}{2\NS X}\to \frac{\Pic C}{2\Pic C}.$  \edited{Since $\Br X$ and $\Pic C$ remain invariant under birational transformations, we may disregard the choice of desingularization.}\Editnote{Comment 15}
			\end{proof}
			
			\begin{cor}\label{cor:DoubleCoverOfP2}
				Let $\pi'\colon X'\to \PP^2$ be a double cover branched over a smooth irreducible curve $B'$ \edited{of degree greater than $2$}\Editnote{Comment 29}.  Then there is a short exact sequence
				\[
					0 \to \frac{\Pic X'}{\langle [H]\rangle + 2\Pic X'} 
					\to \left(\frac{\Pic B'}{K_{B'}}\right)[2] \to 
					\Br X'[2]\to 0\,.
				\]
				{where $[H]\in\Pic X'$ is the pullback of the hyperplane class on $\PP^2$ and $K_{B'}$ is the canonical divisor on $B$'.  In particular, we have} $\dim_{\F_2}\Br X'[2] = 2 + 2(2d - 1)(d - 1) - \rk \Pic X'$, where $2d = \deg(B').$
			\end{cor}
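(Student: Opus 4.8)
The plan is to exhibit $X'$ as a birational model of a smooth double cover of a geometrically ruled surface and then feed this into Corollary~\ref{cor:BrX2presentation}. Choose a point $P\in\PP^2\setminus B'$ and let $S=\mathrm{Bl}_P\PP^2$ (the Hirzebruch surface $\mathbb{F}_1$), with ruling $\varpi\colon S\to\PP^1=W$ given by projection from $P$. Since $P\notin B'$, the strict transform of $B'$ is again smooth and irreducible, lies in the class $2d\,\tilde H$, and meets each fibre in $2d$ points; I keep calling it $B'$. Consequently $B^0=B=\Bfl=B'$, the dual graph $\Gamma$ has a single vertex and no edges, and the generic fibre $C\to\PP^1$ is hyperelliptic of genus $g(C)=d-1$. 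The exceptional curve $E$ of $S\to\PP^2$ is a section disjoint from $B'$, so I take $\frakS=E$; then $\infty$ is not a branch point and $\frakS\cap B^0=\emptyset$. Because $B'$ is smooth the cover $X\to S$ is already smooth (so $\calE=\emptyset$), and $X$ is the blow-up of $X'$ at the two points over $P$; birational invariance of the Brauer group gives $\Br X[2]=\Br X'[2]$.

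Next I would compute $\frakL_{c,\calE}$. As $W=\PP^1$, Remark~\ref{rmk:RationalS} gives $\frakL_{c,\calE}=\frakL_\calE$. Here $\frakS\cap B^0=\emptyset$ forces $m_c=0$ (and $c$ constant), the even fibre degree $2d$ forces $m_1=1$, and $H_1(\Gamma)=0$; a short divisor computation shows $L_\calE\cap K^\times L^{\times2}=L^{\times2}$, so $\frakL_\calE\cong L_\calE/L^{\times2}$. Thus Proposition~\ref{prop:Filtration} yields an extension $0\to\Jac(B')[2]\to\frakL_{c,\calE}\to\Z/2\to0$ of $\F_2$-dimension $2g(B')+1$, with the top factor generated by $\ell_1$. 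To identify this with $(\Pic B'/K_{B'})[2]$, apply the snake lemma to $0\to\langle K_{B'}\rangle\to\Pic B'\to\Pic B'/K_{B'}\to0$, obtaining $0\to\Jac(B')[2]\to(\Pic B'/K_{B'})[2]\to\ker(\Z/2\xrightarrow{K_{B'}}\Pic B'/2\Pic B')\to0$. The conceptual crux is that this last $\Z/2$ is nonzero exactly because $K_{B'}\in2\Pic B'$ — every smooth curve admits a theta characteristic — and by adjunction $K_{B'}=(2d-3)H_{B'}$ with $H_{B'}=[B^0_{w_1}]\in2\Pic B'$, the divisibility that also guarantees the existence of $\ell_1$ in Proposition~\ref{prop:Existence}. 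Matching $\ell_D\leftrightarrow[D]$ on the subgroups and $\ell_1$ with a theta characteristic on the quotients gives an isomorphism $\frakL_{c,\calE}\cong(\Pic B'/K_{B'})[2]$.

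It remains to identify the image of $x-\alpha$ with $\Pic X'/(\langle[H]\rangle+2\Pic X')$. I would study the composite $\Pic X'\hookrightarrow\Pic X\to\Pic C\xrightarrow{x-\alpha}\frakL_{c,\calE}$. By Theorem~\ref{thm:EvenHypThm} the kernel of $x-\alpha$ on $\Pic C/2\Pic C$ is generated by the two points $\infty_1,\infty_2$ of $C$ above $\frakS=E$, which are the restrictions of the exceptional sections $E_1,E_2$ of $X\to X'$; since $\pi^*E=E_1+E_2$ and $\pi^*\tilde H=[H]$ give $[X_w]=[H]-E_1-E_2$, restriction to $C$ shows $[H]|_C=[\infty_1+\infty_2]\in\ker(x-\alpha)$, so $\langle[H]\rangle+2\Pic X'$ lands in the kernel of the composite. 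For the converse and surjectivity I would use that $\Pic X\to\Pic C$ is surjective with kernel the group $V$ of vertical classes, that the pencil of lines through $P$ is Lefschetz so every fibre of $X\to\PP^1$ is irreducible and $V=\Z([H]-E_1-E_2)$, and that $\Pic X=\Pic X'\oplus\Z E_1\oplus\Z E_2$. Chasing the $E_1,E_2$-components of a relation $D-aE_1-bE_2\in2\Pic X+V$ forces $a\equiv b\pmod 2$, whence $aE_1+bE_2\equiv a[H]$ and $D\in\langle[H]\rangle+2\Pic X'$; surjectivity onto the image of $x-\alpha$ follows because each $E_i|_C=[\infty_i]$ already lies in $\ker(x-\alpha)$.

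Assembling these identifications with the exact sequence of Corollary~\ref{cor:BrX2presentation} yields
\[
0\to\frac{\Pic X'}{\langle[H]\rangle+2\Pic X'}\to\Big(\frac{\Pic B'}{K_{B'}}\Big)[2]\to\Br X'[2]\to0\,.
\]
For the dimension count I would note that $\Pic X'$ is torsion-free (these double covers are simply connected) and that $[H]\notin2\Pic X'$ because $[H]^2=2$ is not divisible by $4$; hence $\dim_{\F_2}\Pic X'/(\langle[H]\rangle+2\Pic X')=\rk\Pic X'-1$. Together with $\dim_{\F_2}(\Pic B'/K_{B'})[2]=2g(B')+1=2(2d-1)(d-1)+1$ this gives $\dim_{\F_2}\Br X'[2]=2+2(2d-1)(d-1)-\rk\Pic X'$. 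I expect the main obstacle to be the canonical identification of the middle term — recognising that the extra $\Z/2$ produced by $m_1=1$ is precisely a theta characteristic (equivalently that $H_{B'}\in2\Pic B'$) — together with the bookkeeping of vertical and exceptional classes required to pin the kernel on the left down to $\langle[H]\rangle+2\Pic X'$.
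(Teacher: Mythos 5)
Your proposal follows the same route as the paper's proof: blow up $\PP^2$ at a point $P\notin B'$ to obtain a geometrically ruled surface, apply Corollary~\ref{cor:BrX2presentation} to the resulting fibration, identify $\frakL_{c,\calE}$ with $\left(\Pic B'/K_{B'}\right)[2]$ via $2$-torsion classes together with theta characteristics (using $K_{B'}=(2d-3)[l]|_{B'}$), and identify the left-hand term with $\Pic X'/(\langle[H]\rangle+2\Pic X')$ by tracking the exceptional and vertical classes. The supporting computations you supply ($m_c=0$, $m_1=1$, $H_1(\Gamma)=0$, the snake-lemma description of $\left(\Pic B'/K_{B'}\right)[2]$, and $[H]|_C=[\infty^++\infty^-]$) are consistent with the paper's argument.

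There is, however, one genuine omission: you take $P$ to be an \emph{arbitrary} point of $\PP^2\setminus B'$, whereas the argument requires that no line through $P$ be everywhere tangent to $B'$. This genericity condition is load-bearing at exactly the two steps you gloss over. First, your claim that every fibre of $X\to\PP^1$ is irreducible (so that the vertical classes reduce to $\Z([H]-E_1-E_2)$) fails for a totally tangent line, whose fibre splits into two components and contributes extra classes to $\ker(\Pic X\to\Pic C)$, breaking the identification of the kernel with $\Pic X'/(\langle[H]\rangle+2\Pic X')$. Second, your ``short divisor computation'' that $L_\calE\cap K^\times L^{\times2}=L^{\times2}$ also fails there: if every ramification index over $w$ is even, then $B_w\in 2\Div(B)$, so a uniformizer at $w$ gives a class of $L_\calE/L^{\times 2}$ killed in $\frakL_\calE$, and $\frakL_{c,\calE}$ drops below $\left(\Pic B'/K_{B'}\right)[2]$. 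Note that your write-up never invokes the hypothesis $\deg B'>2$; it is needed precisely here. For a conic, \emph{every} point off $B'$ lies on a totally tangent line and no admissible $P$ exists, while for $\deg B'\ge 4$ the totally tangent lines form a finite subset of the dual curve, so a suitable $P$ can be chosen. With that genericity condition added to the choice of $P$, your argument coincides with the paper's proof.
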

			\begin{remarks}\hfill
                \begin{enumerate}
                    \item[(i)] \edited{If $\pi'\colon X'\to \PP^2$ is a double cover branched over a conic, then $X'$ is a quadric surface and thus $\Br X' = 0$.}\Editnote{Comment 29}
                    \item[(ii)]	If $\pi'\colon X'\to \PP^2$ is branched over a singular curve $B'$ with at worst simple singularities, then Theorem~\ref{thm:BrX2presentation} still applies to give a presentation of $\Br X'[2]$; however, the presentation cannot solely be given in terms of a quotient of $\left({\Pic B'}/{K_{B'}}\right)[2]$.
                \end{enumerate}
			\end{remarks}
			\begin{proof}[Proof of Corollary~\ref{cor:DoubleCoverOfP2}]
				The group $\left(\Pic B'/K_{B'}\right)[2]$ consists of the $2$-torsion classes in $\Jac(B')$ and the theta characteristics, and so it has $\F_2$-dimension $1 + 2g(B') = 1 + 2(2d - 1)(d - 1).$  Therefore, the second claim follows easily from the first.
				
				Fix a point $P\in \PP^2\setminus B'$ such that no line through $P$ is everywhere tangent to $B'$.  Let $S := \textup{Bl}_P \PP^2$, $X := \textup{Bl}_{\pi'^{-1}(P)} X'$, and let $B$ denote the strict transform of $B'$ in $S$, or equivalently, $X$.  Projection away from $P$ gives a {geometric} ruling on $S$, thus we may apply the theorem.  We may choose coordinates such that $\mathfrak{S}$ is the exceptional curve above $P$; then $c = 1$ and the two exceptional curves above $\pi'^{-1}(P)$ correspond to $\infty^+$ and $\infty^-$.  Hence, by Theorems~\ref{thm:EvenHypThm} and~\ref{thm:BrX2presentation}\Editnote{Comment 30: No change needed - see Response.txt}, we have a short exact sequence
				\[
					0\to 
					\frac{\Pic C}{\langle[\infty^+],[\infty^-]\rangle + 2\Pic C}
					\stackrel{x-\alpha}\To
										\frakL_{c,\calE} \stackrel{\gamma}{\To}
										\Br X[2] \To 0\,.
				\]
				
				By adjunction, {$K_{B'} = (2d - 3)[l]|_{B'}$}, and $[l]|_{B'} = [B_{w_1}]$ in $\Pic(B)$, where $[l]\in\Pic \PP^2$ is the class of a line. (Note that $\pi'^*[l] = [H]$.) So the theta characteristics are in bijection with functions $\ell\in L^{\times}$, considered up to squares, such that $\divv(\ell) = {B_{w_1}} + 2D$ for some {$D\in \Div(B)$}.  This, together with our assumption on the point $P$, implies that $\left(\frac{\Pic B'}{K_{B'}}\right)[2]$ is isomorphic to $\frakL_{c,\calE}$.  Furthermore, our assumptions on the point $P$ also implies that we have an isomorphism $\Pic X'/[H]\stackrel{\sim}{\to} \Pic C/\langle[\infty^+],[\infty^-]\rangle$ obtained by composing the pullback map with restriction to the generic fiber.  This completes the proof.
			\end{proof}

			\begin{lemma}\label{lem:L_calE}\Editnote{This lemma is new.}
				Assume that $S$ is geometrically ruled.  Then $\ell\in L^{\times}$ represents an element in $\frakL_\calE$ if and only if there exists a function $a\in K^{\times}$ and integers $n_1,n_c$ and $n_P$ such that
	            \begin{equation}\label{eq:UnramDivisor}
			\nu_*(\divv(a\ell)) \equiv 
			n_{1} B^0_{w_1}	
			+n_c \left(\frakS\cap \BOfl\right)
	                + \sum_{\substack{w \in W\\ S_w\subset B^0}} 
	                    \sum_{P\in \BOfl_w}n_P P
	                \pmod {2\Div(\BOfl)}, 
	            \end{equation}
	            where $\sum_{P\in \BOfl_w}n_P \equiv n_c \bmod 2$ for all $w \in W$ such that $S_w\subset B^0$. 
			\end{lemma}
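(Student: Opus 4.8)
The plan is to reduce the membership $\ell \in \frakL_\calE$ to the existence of a suitable lift in $\kk(B)_\calE$, and then to test the defining condition of $\kk(B)_\calE$ fiber by fiber, exploiting that the vertical fibers are projective lines because $S$ is geometrically ruled. First I would unwind the definitions. Since $\frakL_\calE$ is the image of $L_\calE$ under $L^\times \to \frakL = L^\times / K^\times L^{\times 2}$, and since $L^{\times 2} \subseteq L_\calE$, the class of $\ell$ lies in $\frakL_\calE$ if and only if $a\ell \in L_\calE$ modulo $L^{\times 2}$ for some $a \in K^\times$. By Lemma~\ref{lem:prBBfl}, $\operatorname{pr} \colon \kk(B)^\times \to L^\times$ induces an isomorphism $\kk(B)_\calE/\kk(B)^{\times 2} \isom L_\calE/L^{\times 2}$, so this in turn holds if and only if there is an $a \in K^\times$ and a function $\tilde\ell \in \kk(B)_\calE$ whose $\Bfl$-component equals $a\ell$ modulo squares. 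The function $a$ in the statement is exactly this $a$, so it remains to characterize, for fixed $a$, when $a\ell$ extends to an element of $\kk(B)_\calE$.

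Next I would exploit geometric ruledness to describe $B$. Every vertical irreducible component of $B^0$ is a whole fiber $S_w \cong \PP^1$ with $S_w \subseteq B^0$, and since such a fiber is already smooth its normalization is itself; hence $B = \Bfl \sqcup \bigsqcup_{w\colon S_w \subseteq B^0} S_w$, and a lift has the form $\tilde\ell = (a\ell, (f_w)_w)$ with $f_w \in \kk(S_w)^\times$. Writing $D_h := \nu_*(\divv(a\ell)) \in \Div(\BOfl)$, additivity of $\divv$ and $\nu_*$ gives, in $\Div(B^0)\otimes \Z/2\Z$,
\[
	\psi(\tilde\ell) \;=\; D_h \;+\; \sum_{w\colon S_w \subseteq B^0} \divv(f_w)\,,
\]
where $D_h$ is supported on $\BOfl$ and each $\divv(f_w)$ on $S_w$, the two overlapping only at the nodes $\BOfl_w = \BOfl \cap S_w$. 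Because the normalization hypothesis places $\Omega$ away from the fibre above $\infty$, the point $\frakS \cap S_w$ is disjoint from $\BOfl_w$, so $\frakS \cap B^0 = (\frakS\cap\BOfl) + \sum_w (\frakS\cap S_w)$ splits cleanly, while $B^0_{w_1}$ is supported on $\BOfl$ since $S_{w_1}\not\subseteq B^0$. I would then project the condition $\psi(\tilde\ell) \in \langle \frakS\cap B^0, B^0_{w_1}\rangle$ onto each vertical fibre $S_w$ and onto $\BOfl$ separately.

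The projection onto $S_w$ reads $\divv(f_w) \equiv \bigl(\sum_{P \in \BOfl_w} \ord_P(D_h)\,P\bigr) + n_c (\frakS \cap S_w) \pmod{2}$, and because $\frakS$ is a section this right-hand side has degree $\sum_{P} \ord_P(D_h) + n_c$ on $S_w$; on $\PP^1$ such an $f_w$ exists precisely when this degree is even, i.e. $\sum_{P\in\BOfl_w}\ord_P(D_h) \equiv n_c \pmod 2$. Here I use crucially that on $\PP^1$ every degree-zero divisor is principal, so that every even-degree class in $\Div(S_w)\otimes\Z/2\Z$ is realized by a function. Setting $n_P := \ord_P(D_h)$ for $P \in \BOfl_w$, the projection onto $\BOfl$ becomes exactly
\[
	D_h \equiv n_1 B^0_{w_1} + n_c(\frakS\cap\BOfl) + \sum_{w\colon S_w\subseteq B^0}\sum_{P\in\BOfl_w} n_P P \pmod{2\Div(\BOfl)}\,,
\]
with the constraint $\sum_{P\in\BOfl_w} n_P \equiv n_c$ recording solvability of each vertical equation. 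This yields both implications at once: from a lift $\tilde\ell$ one reads off $n_1,n_c$ and the $n_P$ and verifies the displayed condition, while conversely the condition lets one solve for each $f_w$ on $\PP^1$ and assemble $\tilde\ell \in \kk(B)_\calE$, whence $\ell \in \frakL_\calE$.

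The main obstacle I anticipate is the bookkeeping at the nodes $\BOfl_w$: a node is a single point of $B^0$ receiving contributions both from $\divv(a\ell)$ (through $\Bfl$) and from $\divv(f_w)$ (through $S_w$), so I must track carefully how the multiplicities add in $\Div(B^0)$ and confirm that the node multiplicities of $D_h$ are unconstrained on the $\BOfl$-side, being absorbable into $f_w$, yet are precisely what forces the per-fibre congruence $\sum_P n_P \equiv n_c$. Verifying the clean splitting of $\frakS\cap B^0$ from the hypothesis on $\Omega$, and that no horizontal-fibre interaction occurs at $w_1$, are the secondary points to check.
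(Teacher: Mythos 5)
Your overall strategy is the same as the paper's: reduce membership in $\frakL_\calE$ to the existence of a lift $(a\ell,(f_w)_w)\in\kk(B)_\calE$, use that geometric ruledness forces every vertical component of $B^0$ to be a full fiber $S_w\cong\PP^1$ so that the $f_w$ live on disjoint rational curves, solve for each $f_w$ using that every even-degree class in $\Div(\PP^1)\otimes\Z/2$ is realized by a function, and extract the per-fiber parity constraint $\sum_{P\in\BOfl_w}n_P\equiv n_c$ from $\deg\divv(f_w)=0$. All of that is exactly the published argument.

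There is, however, one step that fails as stated: the claim that ``the point $\frakS\cap S_w$ is disjoint from $\BOfl_w$,'' deduced from the hypothesis that $\Omega$ contains no point above $\infty$. That hypothesis only guarantees that $\frakS$ is not a \emph{component} of $B^0$; it does not prevent the section $\frakS$ from passing through a point of $\BOfl$ lying on a vertical branch fiber $S_w\subseteq B^0$. Indeed $\frakS\cap\BOfl$ is a nonempty $0$-cycle whenever $\divv(c)\notin 2\Div(W)$, and the paper explicitly confronts the case $Q\in\frakS\cap\BOfl$ with $S_{\varpi(Q)}\subset B^0$ in the proof of Theorem~\ref{thm:BrX2presentation}. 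When such a point $Q=\frakS\cap S_w$ is a node, your choice $n_P:=\ord_P(D_h)$ gives $\sum_{P\in\BOfl_w}n_P\equiv n_c+n_c\,\ord_Q(\frakS\cap\BOfl)\bmod 2$, which can violate the required constraint. The repair is the one already built into the statement of the lemma: define $n_P:=\ord_P(D_h)-n_c\,\ord_P(\frakS\cap\BOfl)$ at such nodes (equivalently, as in the paper, read off $n_P$ from the congruence $\nu_*\divv(f_w)\equiv n_c(\frakS\cap S_w)+\sum_P n_P P$ rather than from $D_h$ directly); the term $n_c(\frakS\cap\BOfl)$ on the right-hand side of \eqref{eq:UnramDivisor} then absorbs the discrepancy and the degree count again yields $\sum_P n_P\equiv n_c$. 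With that adjustment your argument is complete and coincides with the paper's.
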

			
			\begin{proof}
                If $\ell\in L^{\times}$ represents an element in $\frakL_{\calE}$, then there exist $n_1,n_c \in \{0,1\}$ and functions $a\in K^{\times}$ and $f_1, \ldots, f_n \in k(x)^{\times}$ such that
                \[
                    \nu_*\divv(a\ell, f_1, \ldots, f_n) \equiv
                     n_{1}B^0_{w_1} + n_c\left(\frakS\cap B^0\right) \pmod{2\Div(B^0)}
                \]
                Recall that $\kk(B) \isom L\times \prod_{i=1}^n\kk(F_i)$ where $F_1, \ldots, F_n$ are the irreducible components of $B^0$ that do not map dominantly to $W$. Since $S$ is geometrically ruled, the $F_i$ are pairwise disjoint genus zero curves.
	% the closure of $B\setminus \Bfl$.) 
		 Since $\frak{S} \cap B^0 = \frak{S}\cap \BOfl + \sum_{i=1}^n\frak{S}\cap F_i$ and $w_1$ was chosen such that $S_{w_1}\not\subset B^0$, this implies that 
                \begin{equation*}
		    \nu_*\divv(a\ell) \equiv 
                    n_{1}B^0_{w_1} + n_c\left(\frakS\cap \BOfl\right) + 
                    \sum_{i=1}^n \sum_{P\in \BOfl\cap F_i}n_PP \pmod{2\Div(\BOfl)},
		\end{equation*}
		where, for all $i$, the integers $n_P$ satisfy
		\begin{equation}\label{eq:divf_i}
                    \nu_*\divv(f_i) \equiv 
                    n_c (\frakS\cap F_i) + \sum_{P\in \BOfl\cap F_i}n_P P \pmod{2\Div(B^0)}.
                \end{equation}
		As $\frak{S}\cap F_i$ has degree $1$ and any principal divisor has degree $0$, it follows that $a\ell$ must satisfy~\eqref{eq:UnramDivisor}.

		To prove the converse, suppose that $a\ell$ satisfies~\eqref{eq:UnramDivisor}. For each $w$ such that $S_w \subset B^0$ there is some $i$ such that $S_w = F_i$ and $\sum_{P\in \BOfl\cap F_i}n_P = \sum_{P \in \BOfl_w}n_P \equiv n_c \bmod 2$. Since the Jacobian of $F_i$ is $2$-divisible, there exists a function $f_i \in \kk(F_i)$ satisfying~\eqref{eq:divf_i}. Then   
		\[
			\nu_*\divv(a\ell,f_1,\dots,f_n) \equiv n_1 B_{w_1}^0 + n_c(\frak{S}\cap B^0)  \pmod{2\Div(B^0)}\,,
		\]
		which implies that $(a\ell,f_1,\dots,f_n) \in \kk(B)_\calE$ and so $\ell$ represents a class in $\frakL_\calE$.
			\end{proof}
			
			\begin{proof}[Proof of Theorem~\ref{thm:BrX2presentation}]\Editnote{This proof has been rewritten.}
				It suffices to show that 
				\[
					\gamma^{-1}\left(\Br (X\setminus\calE)[2]\right) = 
					\frakL_{c,\calE};
				\]
				the statement about the kernel of $\gamma$ and the exactness follows from Theorem~\ref{thm:gammawelldefined}.  
				
				If $\ell\in L^{\times}$ represents an element in $ \frakL_{c,\calE}$, then $\Norm_{L/K}({\ell})\in K^{\times2}\cup cK^{\times2}$ and there exists a function $a\in K^{\times}$ and integers $n_1,n_c$ and $n_P$ such that~\eqref{eq:UnramDivisor} holds.  Thus Theorem~\ref{thm:gammawelldefined} and Proposition~\ref{prop:UnramifiedConditions} imply that $\gamma'(\ell)\in \Br (X\setminus\calE)[2]$.

				{Now let $\ell\in L^{\times}$ be such that $\gamma(\overline{\ell})\in\Br(X\setminus\calE) \subseteq \Br C$. By Theorem~\ref{thm:gammawelldefined}, $\overline{\ell} \in \frakL_c$.  In particular, there exists $n_c \in \{0,1\}$ such that $\varpi_*\divv(\ell) \equiv n_c\divv(c) \bmod{2\Div(W)}.$ By Proposition~\ref{prop:UnramifiedConditions},

		\begin{equation}\label{eq:triplesum}
				\nu_*(\divv(\ell)) \equiv 
			    \sum_{\substack{w \in W\\ S_w\not\subset B^0}}m_w B^0_w
	                    + \sum_{\substack{Q\in \frakS\cap \BOfl \\ }} n_QQ 
	                    + \sum_{\substack{w \in W\\ S_w\subset B^0}} 
	                        \sum_{P\in \BOfl_w}n_P P
	                    \pmod {2\Div(\BOfl)},
	            \end{equation}
	            for some choice of $m_w, n_P, n_Q$ only finitely many of which are nonzero. This does not necessarily specify the $n_P$ and $n_Q$ uniquely modulo $2$, as any point $Q \in \frak{S} \cap \BOfl$ such that $S_{\varpi(Q)} \subset B^0$ is equal to some $P$. We may specify a unique choice modulo $2$ by requiring that $n_Q \equiv n_cv_Q(\frak{S}\cap \BOfl) \mod 2\Div(\BOfl)$, for all $Q \in \frak{S} \cap \BOfl$ such that $S_{\varpi(Q)} \subset B^0$.

			Since the Jacobian of $W$ is $2$-divisible,} there exists an integer $n_{1}$ and a function $a\in K^{\times}$ such that
				\[
					\divv(a) \equiv 
					n_{1}w_1 - 
					\sum_{\substack{w\in W\\S_w\not\subset B^0}}m_ww 
					\pmod{2\Div(W)}.
				\]			
				Thus,
	            \begin{equation}\label{eq:VerticalConditions}
	                \nu_*(\divv(a\ell)) \equiv
			    n_{1} B^0_{w_1}
	                    + \sum_{Q\in \frakS\cap \BOfl} n_QQ 
	                    + \sum_{\substack{w \in W\\ S_w\subset B^0}} 
	                        \sum_{P\in \BOfl_w}n_P P
	                    \pmod {2\Div(\BOfl)}.
	            \end{equation}
            
	            Pushing~\eqref{eq:VerticalConditions} forward to $W$ and using the fact that $\deg(\Bfl/W)$ is even we obtain that
	            \begin{equation}\label{eq:divccongruence}
			n_c\divv(c) \equiv
	                \sum_{Q\in \frakS\cap \BOfl} n_Q\varpi_*Q
	                    + \sum_{\substack{w \in W\\ S_w\subset B^0}} 
	                        \left(\sum_{P\in \BOfl_w}n_P\right)w \pmod{2\Div(W)}
	            \end{equation}
		    Since $\divv(c) \equiv \varpi_*(\frakS\cap B^0)\bmod{2\Div(W)}$, this implies that $n_Q \equiv n_cv_Q(\frak{S}\cap \BOfl) \bmod 2$ for any $Q$ such that $S_{\varpi(Q)} \not\subset B^0$. As we have also arranged this to be true for $Q$ such that $S_{\varpi(Q)} \subset B^0$, it follows that $\sum_{Q \in \frak{S}\cap\BOfl} n_Q Q \equiv n_c(\frak{S}\cap\BOfl) \bmod 2\Div(\BOfl).$
		The congruence in~\eqref{eq:divccongruence} then gives 
		\begin{align*}
			\sum_{\substack{w \in W\\ S_w\subset B^0}} 
	                        \left(\sum_{P\in \BOfl_w}n_P\right)w
				&\equiv n_c\varpi_*\left((\frak{S}\cap B^0) - (\frak{S}\cap\BOfl)\right) &\pmod{2\Div(W)}\\
				&\equiv n_c\sum_{\substack{w \in W\\ S_w \subset B^0}}w &\pmod{2\Div(W)}
		\end{align*}
		It follows that $\sum_{P \in \BOfl_w}n_P \equiv n_c \bmod 2$, for each $w \in W$ such that $S_w\subset B^0$.
	            It now follows from Lemma~\ref{lem:L_calE} that $\overline{\ell}\in \frakL_{c,\calE},$ which completes the proof.
			\end{proof}
				
	%%%%%%%%%%%%%%%%%%%%%%%%%%%%%%%%%%%%%%%%%%%%%%%%%%%%%%%%%%%%%%%%%%%%%%%%%%%%
	\section{The dimension of $\Br X[2]$}\label{sec:sizeBr}%%%%%%%%%%%%%%%%%%%%%
	%%%%%%%%%%%%%%%%%%%%%%%%%%%%%%%%%%%%%%%%%%%%%%%%%%%%%%%%%%%%%%%%%%%%%%%%%%%%
		
		\begin{thm}\label{thm:sizeBr}
			Assume that $S$ is geometrically ruled, that $B^0$ is connected with at worst simple singularities, and that $\Pic^0 X$ \edited{is in the kernel of the \edited{restriction} morphism $\Pic X\to \Pic C$}.\Editnote{Comment 31}  Then
			\edited{\begin{equation*}
                2g(B^0) - 4g(W)  + 3 - \textup{rank}(\NS X)
                \leq \dim_{\F_2}\Br X[2] \leq
                2g(B^0) - 2g(W)  + 4 - \textup{rank}(\NS X)
                % \dim_{\F_2}\Br X[2] \le 4 + 2g(B^0) - 2g(W) - \textup{rank}\NS X\,,
			\end{equation*}}
			with \edited{the upper bound an }equality \edited{if $S$ is rational.}%if and only if $\frakL_\calE = \frakL_{c,\calE}$.
		\end{thm}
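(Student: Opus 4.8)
The plan is to compute $\dim_{\F_2}\Br X[2]$ via the exact sequence from Corollary~\ref{cor:BrX2presentation},
\[
\frac{\Pic C}{2\Pic C} \xrightarrow{x-\alpha} \frakL_{c,\calE} \xrightarrow{\gamma} \Br X[2] \to 0\,,
\]
which gives $\dim_{\F_2}\Br X[2] = \dim_{\F_2}\frakL_{c,\calE} - \dim_{\F_2}\im(x-\alpha)$. The strategy is to bound each term separately. For the source $\frakL_{c,\calE}$, I would feed the filtration of Proposition~\ref{prop:Filtration} into the definition $\frakL_{c,\calE} = \frakL_c \cap \frakL_\calE$, where $\frakL_\calE$ is the image of $L_\calE$ in $L^\times/K^\times L^{\times 2}$. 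The dominant contributions are $G_3/G_4 \cong \Jac(B)[2]$, which has $\F_2$-dimension $2g(B)$, and $G_2/G_3 \cong H_1(\Gamma)$; under the connectedness hypothesis on $B^0$ I expect the graph $\Gamma$ and the genus relation $g(B^0) = \sum g(B_i) + \dim_{\F_2} H_1(\Gamma) + 1 - h^0$ to convert $2g(B)$ together with $H_1(\Gamma)$ into a clean $2g(B^0)$ term, after accounting for the passage from $L^\times$ to $L^\times/K^\times L^{\times 2}$ (quotienting by the constants $K^\times$ costs one dimension) and the small correction terms $m_1, m_c \in \{0,1\}$ and the intersection with $\frakL_c$.

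Next I would handle the image of $x-\alpha$. By Theorem~\ref{thm:EvenHypThm} its kernel is generated by divisors lying over $\infty$, so $\dim_{\F_2}\im(x-\alpha) = \dim_{\F_2}(\Pic C/2\Pic C) - (\text{contribution of the infinity fibers})$. The hypothesis that $\Pic^0 X$ lies in the kernel of $\Pic X \to \Pic C$ is precisely what lets me relate $\Pic C/2\Pic C$ to $\NS X/2\NS X$: as in the proof of Theorem~I, the restriction map induces a surjection $\NS X/2\NS X \twoheadrightarrow \Pic C/2\Pic C$, and the stated hypothesis should make this close to an isomorphism, allowing me to write the relation contribution in terms of $\rank(\NS X)$. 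Here $\dim_{\F_2}(\NS X/2\NS X)$ differs from $\rank(\NS X)$ only by the $2$-torsion of $\NS X$, and over a separably closed field of characteristic $\ne 2$ this torsion is controlled; I would either bound it or show it vanishes under the hypotheses.

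The inequalities (rather than an equality) then arise from two sources of slack that I must bound from both sides: the defect between $\dim_{\F_2}(\NS X/2\NS X)$ and $\rank(\NS X)$, and the genus relation $g(B) = g(B^0) - \dim_{\F_2}H_1(\Gamma)$ combined with the parameters $m_1, m_c$ and whether the intersection with $\frakL_c$ is proper. Assembling all of this, the main term $2g(B^0)$ emerges, the $\rank(\NS X)$ enters with a minus sign, and the $-2g(W)$ (resp.\ $-4g(W)$) should come from how the Jacobian of $W$ interacts with the constant-field quotient and with $\Pic^0$ — that is, from the difference between $\Pic C$ and $\NS X$ being governed by $W$. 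The width of the interval, $2g(W)+1$, reflects exactly the terms I cannot pin down in general: the $g(W)$-dimensional ambiguity in relating $\Pic C$ to $\NS X$ through the fibration, doubled by a symmetric source, plus the $\pm 1$ from $m_c$ and the $\frakL_c$-intersection.

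The hard part will be the two-sided estimate relating $\Pic C/2\Pic C$ to $\NS X/2\NS X$ and extracting the exact $g(W)$-dependence: the fibration $X \to W$ contributes classes to $\NS X$ coming from the base (pullbacks from $W$) and from the components of the reducible and branch fibers living in $\calE$, and tracking which of these survive modulo $2$ and modulo the image of $x-\alpha$ is where the genuine bookkeeping lies. For the rational case $W = \PP^1$, Remark~\ref{rmk:RationalS} gives $\frakL_{c,\calE} = \frakL_\calE$ (so the $\frakL_c$-intersection imposes no extra constraint) and $g(W) = 0$ collapses the interval to a point, which is why the upper bound becomes an equality; I would verify the rational case directly, where $\Pic^0 X$ and the Jacobian of $W$ both disappear, pinning down every term and confirming $\dim_{\F_2}\Br X[2] = 2g(B^0) + 4 - \rank(\NS X)$.
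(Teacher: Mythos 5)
Your overall strategy is exactly the paper's: Corollary~\ref{cor:BrX2presentation} reduces everything to $\dim_{\F_2}\frakL_{c,\calE} - \dim_{\F_2}\im(x-\alpha)$, the filtration of Proposition~\ref{prop:Filtration} handles the first term, Theorem~\ref{thm:EvenHypThm} together with the hypothesis on $\Pic^0 X$ handles the second, and Remark~\ref{rmk:RationalS} explains why the rational case is an equality. However, two of your concrete steps would fail as written.

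First, the genus relation $g(B^0) = \sum g(B_i) + \dim_{\F_2}H_1(\Gamma) + 1 - h^0$ that you propose to use is the formula for a \emph{nodal} curve; it is false for the other simple singularities (already a cusp drops the genus by $1$ while contributing nothing to $\Gamma$). The clean $2g(B^0)$ term does not emerge from $2g(B)$ and $b_1(\Gamma)$ alone. One must sum the $\delta$-invariants of the $A_n$, $D_n$, $E_n$ singularities and their edge contributions to $\Gamma$ to get $2g(B) + h^0(B) + b_1(\Gamma) = 2g(B^0) - \#\calE + 1$, where $\#\calE$ is the number of exceptional curves of the canonical resolution, and then cancel this $\#\calE$ against the exact identity $\rank\NS X = \rank\Pic C + \#\calE + 1 + \#\{w : \ldots\}$, as in Proposition~\ref{prop:dimLEmodIm}. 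Without routing the singularity data through $\#\calE$ and $\NS X$ in this way, the bookkeeping does not close up for non-nodal branch curves.

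Second, your diagnosis of where the interval width $2g(W)+1$ comes from is off. The relation between $\Pic C$ and $\NS X$ is not a source of ambiguity: under the hypotheses it is the exact identity above, and the torsion $\Pic C[2] = J(K)[2]$ is computed exactly in terms of $h^0(\Bfl)$ and $(K^\times\cap L^{\times 2})/K^{\times 2}$ via partitions of the ramification locus (Lemma~\ref{lem:dimImage}); no defect between $\dim_{\F_2}(\NS X/2\NS X)$ and $\rank\NS X$ ever enters. The $2g(W)$ of slack in the lower bound comes entirely from the norm condition: $\frakL_{c,\calE} = \frakL_c\cap\frakL_\calE$ can be smaller than $\frakL_\calE$ by at most $\dim_{\F_2}\Jac(W)[2] = 2g(W)$, and establishing this bound (Lemma~\ref{lem:LowerBound}) requires a separate snake-lemma argument involving the elements $\ell_c$, $\ell_{c,1}$. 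The remaining $+1$ is the term $m_c + [c\in K^{\times 2}]$, which is forced to equal $1$ precisely when $g(W)=0$, because only then does $\divv(c)\in 2\Div(W)$ imply $c\in K^{\times 2}$. You do list the $\frakL_c$-intersection among your sources of slack, so the repair is available to you, but as stated your plan would spend effort bounding quantities that are in fact exactly computable and would miss the one genuine two-sided estimate.
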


		\begin{remark}
			Many surfaces satisfy the condition that \edited{$\Pic^0 X$ is in the kernel of the \edited{restriction} morphism $\Pic X\to \Pic C$.}  %$\Pic^0 X$ becomes trivial upon restriction to $\Pic C$.
              Indeed, K3 surfaces and Enriques surfaces have trivial $\Pic^0$, and the proof of the following corollary shows that the same is true for double covers of rational geometrically ruled surfaces that have connected branch locus.
		\end{remark}
		
		\begin{cor}
			\label{cor:sizeBr}
			Let $\varpi\colon S\to \PP^1$ be a rational geometrically ruled surface with invariant $e \ge 0$, and let $Z$ be a section of $\varpi$ with self-intersection $-e$. Suppose that $B^0$ is a connected curve of type $(2a,2b) \in \Pic(S) \simeq \Z Z\times \Z S_{w_1}$ with at worst simple singularities. Then
			\[
				\dim_{\F_2}\Br X[2] = 4 + 2(2a - 1)(2b - ae - 1) - \rank(\NS X)\,.
			\]
		\end{cor}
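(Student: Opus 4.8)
The plan is to deduce the statement from Theorem~\ref{thm:sizeBr}. Since $W = \PP^1$ we have $g(W) = 0$, and since $S$ is rational the theorem gives the upper bound as an equality, so that
\[
	\dim_{\F_2}\Br X[2] = 2g(B^0) + 4 - \rank(\NS X)
\]
\emph{provided} the hypotheses of Theorem~\ref{thm:sizeBr} are met. Two of these---that $S$ is geometrically ruled and that $B^0$ is connected with at worst simple singularities---are assumed outright, so the remaining work is to verify that $\Pic^0 X$ lies in the kernel of the restriction $\Pic X \to \Pic C$, and then to evaluate $g(B^0)$.

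For the first point I would show that $X$ is regular, i.e.\ $q(X) := h^1(X, \OO_X) = 0$, whence $\Pic^0 X = 0$ and the hypothesis holds trivially. Writing $D := aZ + bS_{w_1}$ so that $B^0 \sim 2D$, the double cover satisfies $\pi_*\OO_{X^0} = \OO_S \oplus \OO_S(-D)$, and since $S$ is rational this gives $q(X^0) = h^1(S, \OO_S(-D))$. I would compute this on the Hirzebruch surface $S = \mathbb{F}_e$ using the ruling $\varpi$: relative duality identifies $R^1\varpi_*\OO_S(-aZ)$ with an explicit direct sum of line bundles on $\PP^1$, and the Leray spectral sequence then reduces the vanishing $h^1(S,\OO_S(-D)) = 0$ to a lower bound on $b$ in terms of $a$ and $e$ (explicitly $b \ge (a-1)e + 1$ when $a \ge 2$, with the case $a=1$ automatic). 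The key observation is that this bound is \emph{forced} by the hypotheses: because $B^0$ is reduced, connected, and has only simple singularities, the negative section $Z$ (with $Z^2 = -e$) cannot appear in $B^0$ with multiplicity $\ge 2$; analysing the intersection numbers $B^0\cdot Z$ and $(B^0 - Z)\cdot Z$ (and, when $e = 0$, using connectedness directly to exclude $B^0 \sim 2aZ$) pins $b$ down sufficiently. Finally, simple singularities are rational double points, so the canonical resolution $X \to X^0$ has vanishing $R^1$ of its structure sheaf and hence $q(X) = q(X^0) = 0$. This is precisely the claim recorded in the remark following Theorem~\ref{thm:sizeBr}.

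It then remains to compute $g(B^0)$ by adjunction on $S = \mathbb{F}_e$. Using $Z^2 = -e$, $Z\cdot S_{w_1} = 1$, $S_{w_1}^2 = 0$, and the canonical class $K_S = -2Z - (e+2)S_{w_1}$, together with $B^0 \sim 2aZ + 2bS_{w_1}$, the adjunction formula $2g(B^0) - 2 = B^0\cdot(B^0 + K_S)$ yields
\[
	g(B^0) = (2a-1)(2b - ae - 1)\,.
\]
Substituting into the displayed equality above gives $\dim_{\F_2}\Br X[2] = 4 + 2(2a-1)(2b-ae-1) - \rank(\NS X)$, as claimed.

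The main obstacle is the vanishing $h^1(S, \OO_S(-D)) = 0$: it is not automatic from effectivity alone (indeed it fails for classes $2aZ + 2bS_{w_1}$ with $b$ small relative to $ae$), and the crux is to extract the required lower bound on $b$ from the hypothesis that $B^0$ is a \emph{connected, reduced} curve with only simple singularities. By contrast, the adjunction computation of $g(B^0)$ and the appeal to Theorem~\ref{thm:sizeBr} are routine.
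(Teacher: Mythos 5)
Your proposal is correct and follows the same overall skeleton as the paper's proof: apply Theorem~\ref{thm:sizeBr} with $g(W)=0$ and the upper bound sharp because $S$ is rational, verify the $\Pic^0$ hypothesis by showing $q(X)=h^1(S,\OO_S(-a,-b))=0$ (passing through $\pi_*\OO_{X^0}=\OO_S\oplus\OO_S(-a,-b)$ and the rationality of simple singularities), and finish with the adjunction computation $g(B^0)=(2a-1)(2b-ae-1)$. Where you genuinely diverge is in the one step you rightly flag as the crux, the vanishing of $h^1(S,\OO_S(-a,-b))$ for $a\ge 2$. The paper computes $\chi(\OO_S(-a,-b))$ by Riemann--Roch, notes $h^0(\OO_S(-a,-b))=0$, and argues that $h^0(S,\OO_S(a-2,b-e-2))$, evaluated by pushing forward to $\PP^1$, already accounts for the whole Euler characteristic; its positivity input is $2b-ae-1=g(B^0)\ge 0$, coming from connectedness. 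You instead compute $R^1\varpi_*\OO_S(-aZ-bS_{w_1})$ by relative duality and Leray, which shows the vanishing is \emph{equivalent} to the bound $b\ge (a-1)e+1$, and you derive that bound from $B^0\cdot Z\ge 0$ (or $(B^0-Z)\cdot Z\ge 0$ when $Z\subseteq B^0$, which reducedness allows with multiplicity at most one), with $e=0$ handled by connectedness. Your route buys something real: $b\ge(a-1)e+1$ is exactly the condition needed, whereas the paper's inequality $2b\ge ae+1$ only yields $b-e-2\ge -1$ directly, and the paper's assertions that $b-e-2\ge 0$ and that every summand of $\varpi_*\OO_S(a-2,b-e-2)$ contributes its full expected $h^0$ are the least transparent points of its argument; your intersection-theoretic bound on $b$ is precisely what is needed to make that step airtight. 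The reduction to Theorem~\ref{thm:sizeBr} and the adjunction calculation are identical in the two treatments.
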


		\begin{example}
			If $X$ is a double cover of $\PP^1\times\PP^1$ branched over a $(4,4)$ curve, then $e = 0$, $a=b=2$ and $X$ is a K3 surface. We recover the well known fact that $\dim_{\F_2}\Br X[2] = 22 - \rank(\NS X)$. Note that this argument does not require knowing that $b_2(X) = 22$ or that $\HH^3(X, \Z)_{\textup{tors}} = \{1\}$.
		\end{example}
		
		\begin{proof}[Proof of Corollary~\ref{cor:sizeBr}]
			
			We will prove that $\HH^1(X,\OO_X) = 0$ and so $\Pic^0 X = 0$; hence we may apply the theorem.  \edited{Since} $S$ is rational, 
			% then, as noted in Remark~\ref{rmk:RationalS}, $\frakL_\calE = \frakL_{c,\calE}$, so
			 the upper bound in the theorem is sharp. The formula now follows from the adjunction formula, which gives $g(B^0) = (2a - 1)(2b - ae - 1).$
			
			Now we turn to the computation of $\HH^1(X, \OO_X)$, which (given~\cite[Chap. III, Ex. 8.2]{Hartshorne} and the fact that $X^0$ has at worst simple singularities) is isomorphic to $\HH^1(S, \pi_*\OO_X) = \HH^1(S, \OO_S \oplus \OO_S(-a,-b)) = \HH^1(S, \OO_S(-a,-b))$.  
			Then, by Riemann-Roch, we have
			\begin{align*}
				\hh^0(S, \OO_S(-a,-b)) - \hh^1(S, \OO_S(-a,-b)) + \hh^0(S, \OO_S(a - 2, b - e - 2)) \\= \frac12\left(-a(2-a)(-e) - b(2-a) -a(2 + e - b) \right) + 1 = (a - 1)(b - 1) + \frac{ae}2(a-1).
			\end{align*}
			Since $B^0$ is connected, $a > 0$ and so $\hh^0(S, \OO_S(-a,-b)) = 0$.  If $a = 1$, then $\hh^0(S, \OO_S(a-2, b - e - 2)) = 0$ and so $\hh^1(S,\OO_S(-a,-b)) = 0$.  Hence, we may assume that $a \geq 2$; then by~\cite[Chap. V, Lemma 2.4]{Hartshorne}, 
			\[
				\hh^0(S, \OO_S(a - 2,b - e - 2)) = \hh^0(\PP^1, \varpi_*\OO_S(a-2,b - e - 2)).
			\]
			Further, $\varpi_* \OO_S(a-2,b - e - 2)  = (\OO \oplus\OO(-e))^{\otimes(a-2)}\otimes\OO(b - e - 2).$ Since $B^0$ is connected, $g(B^0) \ge 0$ and so $2b - ae - 1\geq0$.  Combined with $a\geq 2$, this shows that $b - e - 2$ is non negative.  Therefore $\hh^0\left(\PP^1, (\OO \oplus\OO(-e))^{\otimes(a-2)}\otimes\OO(b - e - 2)\right) = (a - 1)(b - 1) + \frac{ae}2(a-1),$ which completes the proof.
		\end{proof}
		
        \edited{We first prove a few preliminary results which will be useful in the proof of Theorem~\ref{thm:sizeBr}.}
		\begin{lemma}\label{lem:dimImage} 
			Assume that $\Pic^0 X$ \edited{is in the kernel of the restriction morphism  $\Pic X \to \Pic C$.} Then $\Pic C$ is a finitely generated abelian group and
			\[	
				\dim_{\F_2}\im (x-\alpha) = \textup{rank}\left(\Pic C\right) + h^0(B^\textup{fl}) - 2 +
				\dim_{\F_2}\left(\frac{K^{\times}\cap L^{\times2}}{K^{\times2}}\right)-
				\begin{cases}
					1 & \textup{if }c\in K^{\times2}\\
					0 & \textup{if }c\notin K^{\times2}
				\end{cases}.
			\]
		\end{lemma}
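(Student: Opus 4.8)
The plan is to compute $\dim_{\F_2}\im(x-\alpha)$ as a difference of two $\F_2$-dimensions, reducing everything to the $2$-torsion of the Jacobian of $C$ together with the behaviour of the fibre over $\infty$. First I would establish finite generation of $\Pic C$. The restriction map $\Pic X \to \Pic C$ to the generic fibre is surjective (extend a divisor on $C$ to its Zariski closure in $X$), and by hypothesis $\Pic^0 X$ lies in its kernel, so it factors through a surjection $\NS X \twoheadrightarrow \Pic C$. Since $\NS X$ is finitely generated (theorem of the base), so is $\Pic C$. Moreover $K = \kk(W)$ is $C_1$ by Tsen's theorem, whence $\Br K = 0$; via $0\to\Pic C\to(\Pic\bar C)^{G_K}\to\Br K$ this forces the index of $C$ to be $1$, so $\deg\colon \Pic C \to \Z$ is surjective and $\Pic^0 C = \Jac(C)(K)$.

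Next I would invoke Theorem~\ref{thm:EvenHypThm}. Since $\frakL$ is $2$-torsion, $x-\alpha$ factors through $\Pic C/2\Pic C$, and the theorem identifies its kernel with the image of the subgroup $N \subseteq \Pic C$ generated by the classes of the divisors lying over $\infty$. Hence $x-\alpha$ induces an isomorphism $\Pic C/(N + 2\Pic C) \isom \im(x-\alpha)$, so that
\[
\dim_{\F_2}\im(x-\alpha) = \dim_{\F_2}(\Pic C/2\Pic C) - \dim_{\F_2}\overline N,
\]
where $\overline N$ is the image of $N$ in $\Pic C/2\Pic C$. Tensoring the degree sequence $0\to\Jac(C)(K)\to\Pic C\xrightarrow{\deg}\Z\to 0$ with $\F_2$ (the quotient is torsion-free, so there is no $\mathrm{Tor}$ term) gives $\dim_{\F_2}(\Pic C/2\Pic C) = \rank(\Pic C) + \dim_{\F_2}\Jac(C)(K)[2]$, which already produces the leading term $\rank(\Pic C)$.

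It then remains to prove $\dim_{\F_2}\Jac(C)(K)[2] - \dim_{\F_2}\overline N = h^0(\Bfl) - 2 + \dim_{\F_2}\big((K^\times\cap L^{\times2})/K^{\times2}\big) - [\,c\in K^{\times2}\,]$. For the first term I would use the classical description of $\Jac(C)(K)[2]$ for $y^2 = cf(x)$ by Galois-stable even-cardinality subsets of the ramification points, equivalently even-degree factorizations of $f$ over $K$; since the irreducible factors of $f$ correspond to the connected components of $\Bfl$, this count is governed by $h^0(\Bfl)$ and splits into the two cases ``some factor of odd degree'' versus ``all factors of even degree''. To see $(K^\times\cap L^{\times2})/K^{\times2}$ appear, I would compare the descent image in $L^\times/L^{\times2}$ with its image in $\frakL = L^\times/K^\times L^{\times2}$ through the exact sequence
\[
0 \to (K^\times\cap L^{\times2})/K^{\times2} \to K^\times/K^{\times2} \to L^\times/L^{\times2}\to\frakL\to 0;
\]
this term is exactly $\ker\big(K^\times/K^{\times2}\to L^\times/L^{\times2}\big)$, and it vanishes as soon as some factor of $f$ has odd degree (by a norm argument on the corresponding odd-degree component of $L$), matching the dichotomy in the combinatorial count. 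Finally the correction $[\,c\in K^{\times2}\,]$ comes from $\overline N$: when $c$ is a square the fibre $\pi^{-1}(\infty)$ consists of two rational points, contributing an odd-degree class to $\overline N$, whereas for $c$ a non-square it is a single point of degree $2$.

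The hard part will be the simultaneous bookkeeping in the previous paragraph. Taken in isolation, $\dim_{\F_2}\Jac(C)(K)[2]$, $\dim_{\F_2}\overline N$, and the rank of $N$ each depend on unstable data (for instance whether $[\infty^+-\infty^-]$ is divisible in $\Jac(C)(K)$), but these dependencies must cancel in the difference. I therefore expect the cleanest route is to package the whole computation as the Euler characteristic of a single complex assembled from the localization sequence for $C\setminus\pi^{-1}(\infty)$, the degree sequence, and the comparison sequence above, so that all rank contributions cancel and only the stable terms $h^0(\Bfl)-2$, $\dim_{\F_2}\big((K^\times\cap L^{\times2})/K^{\times2}\big)$ and $[\,c\in K^{\times2}\,]$ survive. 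Verifying that this alternating sum collapses exactly to the stated formula — and in particular pinning down that the $K^\times$-indeterminacy intrinsic to the $x-\alpha$ map contributes precisely $(K^\times\cap L^{\times2})/K^{\times2}$ — is the crux.
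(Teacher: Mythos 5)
Your skeleton matches the paper's: finite generation of $\Pic C$ via the surjection $\NS X \twoheadrightarrow \Pic C$; the identity $\dim_{\F_2}\im(x-\alpha) = \dim_{\F_2}(\Pic C/2\Pic C) - \dim_{\F_2}\ker(x-\alpha)$ with $\dim_{\F_2}(\Pic C/2\Pic C) = \rank\Pic C + \dim_{\F_2}\Jac(C)(K)[2]$; the kernel generated by divisors over $\infty$, with the $[\,c\in K^{\times2}\,]$ dichotomy coming from whether $\pi^{-1}(\infty)$ is one degree-two point or two rational points; and the count of $\Jac(C)(K)[2]$ via $G_K$-stable even partitions of $\Omega$, governed by $h^0(\Bfl)$. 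However, you have deferred precisely the step that constitutes the bulk of the paper's proof, and the one concrete mechanism you commit to for it is misplaced.

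The gap is the provenance of the term $\dim_{\F_2}\bigl((K^{\times}\cap L^{\times2})/K^{\times2}\bigr)$. You propose to extract it from the comparison of images in $L^{\times}/L^{\times2}$ versus $\frakL = L^{\times}/K^{\times}L^{\times2}$, attributing it to ``the $K^\times$-indeterminacy intrinsic to the $x-\alpha$ map.'' This cannot work as stated: passing to the further quotient $\frakL$ can only shrink an image, whereas the term enters the formula with a positive sign; and in any case the entire effect of that quotient on $\ker(x-\alpha)$ is already packaged into Theorem~\ref{thm:EvenHypThm}, which says the kernel is generated by divisors over $\infty$ --- there is no residual indeterminacy left to account for. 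In the paper the term arises instead \emph{inside} $\dim_{\F_2}\Jac(C)(K)[2]$: besides partitions of $\Omega$ coming from even-degree factors of $f$ over $K$ (contributing $h^0(\Bfl)-2$ or $h^0(\Bfl)-1$ according as $f$ does or does not have an odd-degree factor), there are $2$-torsion classes coming from factorizations $f = \Norm_{F/K}(h)$ over quadratic extensions $F \subseteq L$; these occur only when $g(C)$ is odd and $f$ has no odd-degree factor, and they contribute exactly $\dim_{\F_2}\bigl((K^{\times}\cap L^{\times2})/K^{\times2}\bigr)$. Your count of $\Jac(C)(K)[2]$ omits this second source entirely. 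Relatedly, you do not pin down when $\mm = \pi^*\infty$ lies in $2\Pic C$ (it does if and only if $f$ has an odd-degree factor, by \cite[Lemma 4.2]{CV-BrauerCurves}); this is the other case-dependent quantity, and the paper's proof consists precisely of the parallel case analysis (odd factor exists / no odd factor and $g(C)$ even / no odd factor and $g(C)$ odd) showing that the two case-dependences cancel to leave the stated stable formula. Acknowledging that ``the dependencies must cancel'' is not a substitute for exhibiting the cancellation.
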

		\begin{proof}
			By assumption $\Pic^0 X$ is contained in the kernel of the restriction morphism $\Pic X \to \Pic C$.  As $\Pic X \to \Pic C$\edited{ is surjective}, we also have a surjection $\NS X \to \Pic C$ and so $\Pic C$ is a finitely generated abelian group.  Thus,
			\begin{align*}
				\dim_{\F_2}\frac{\Pic C}{2\Pic C} & = \rank \Pic C + \dim_{\F_2} \Pic C[2] = \rank \Pic C + \dim_{\F_2} J(K)[2],\\
				\dim_{\F_2}\im(x-\alpha) & = \rank \Pic C + \dim_{\F_2} J(K)[2] - \dim_{\F_2} \ker(x-\alpha).\\
			\end{align*}

			We claim that
			\[
				\dim_{\F_2} \ker(x - \alpha) = 				
				\begin{cases}
				1 & \textup{if }c\in K^{\times2}\\
				0 & \textup{if }c\notin K^{\times2}
				\end{cases}
				+ 
				\begin{cases}
					0 & \textup{if $\Omega$ admits a $G_K$-stable partition  }\\
					& \textup{into two sets of odd cardinality,}\\
					1 & \textup{otherwise.}
				\end{cases}
			\]
			By Theorem~\ref{thm:EvenHypThm}, the kernel is generated by the divisors lying over $\infty\in\PP^1_K$.  Additionally, $\mm := \pi^*\infty$ is in $2\Pic C$ if and only if $f(x)$ has a factor of odd degree~\cite[Lemma 4.2]{CV-BrauerCurves}, %\Bianca{double check reference before publication}
and	$\mm$ is the only rational divisor above $\infty\in\PP^1_K$ if $c\notin K^{\times2}$.  If $c\in K^{\times2}$, there there is a rational point $\infty^+$ lying over $\infty\in \PP^1_K$, which gives an independent generator of the kernel.

			Therefore, it suffices to prove that
			\begin{equation}\label{eqn:dimJ2}
				\dim_{\F_2} J(K)[2] = 
					h^0(B^\textup{fl}) - 2 +
					\dim_{\F_2}\left(\frac{K^{\times}\cap
					L^{\times2}}{K^{\times2}}\right) +
				\begin{cases}
					0 & \textup{if $\Omega$ admits a $G_K$-stable partition  }\\
					& \textup{into two sets of odd cardinality,}\\
					1 & \textup{otherwise.}
				\end{cases}
			\end{equation}

			Recall that the elements of $J(K)[2]$ correspond to unordered $G_K$-stable partitions of $\Omega$, the branch locus of $C \to \PP^1_K$, into two sets of even cardinality; note that the number of $G_K$ orbits of $\Omega$ is exactly $h^0(B^{\textup{fl}})$.   Unordered $G_K$-stable partitions of $\Omega$ into two sets of even cardinality can arise in essentially two ways: from even degree factors of $f(x)$, and {from quadratic extensions $F/K$ such that $f(x)$ is the norm of a polynomial over $F$.}  The partitions corresponding to even degree factors of $f(x)$ over $K$ generate a subgroup of $J(K)[2]$ of dimension equal to $h^0(B^{\textup{fl}}) - 2$ or $h^0(B^{\textup{fl}}) - 1$, correspondingly as $f(x)$ does or does not have any factors of odd degree. Partitions coming from a factorization over a quadratic extension only occur when the genus of $C$ is odd, and then only if $f(x)$ has no factor of odd degree, in which case they generate a subgroup of $J(K)[2]$ of dimension $\dim_{\F_2}(K^\times\cap L^{\times 2})/K^{\times 2}$. Thus, $\dim_{\F_2}J(K)[2]$ is equal to
				\[
					\begin{cases}
						h^0(B^{\textup{fl}}) - 2	& 
							\text{ if $f(x)$ has a factor of odd degree}\,,\\
						h^0(B^{\textup{fl}}) - 1 	& 
							\text{ if $f(x)$ has no factor of odd degree 
							and $g(C)$ is even}\,,\\
						h^0(B^{\textup{fl}}) - 1 + \dim_{\F_2}\left(\frac{K^\times\cap L^{\times 2}}{K^{\times 2}}\right) 
							&\text{ if $f(x)$ has no factor of odd degree and 
							$g(C)$ is odd}\,.
					\end{cases}
				\]
				When $f(x)$ has a factor of odd degree we clearly have $\dim_{\F_2}\left((K^\times\cap L^{\times 2})/K^{\times 2}\right) = 0$ and that $\Omega$ admits a $G_K$ stable partition into two set of odd cardinality, so~\eqref{eqn:dimJ2} holds.  Now assume that $f(x)$ has no factor of odd degree. When the genus of $C$ is even, $\deg f(x) \equiv 2 \bmod 4$, and so there can be at most one quadratic extension of $K$ contained in $L$. If such an extension exists, then it gives a $G_K$-stable partition of $\Omega$ into two sets of odd cardinality. When the genus of $C$ is odd there cannot be a $G_K$-stable partition of $\Omega$ into two sets of odd cardinality because $\deg f(x) \equiv 0 \bmod 4$.  Thus~\eqref{eqn:dimJ2} holds in all cases, which completes the proof.  
		\end{proof}

		\begin{lemma}\label{lem:dimLcalE}\Editnote{This lemma is new.} 
			The $\F_2$-dimension of $\frak{L}_\calE$ is equal to
			\begin{align*}
				&2g(B) + 2h^0(B) + b_1(\Gamma) - 2g(W) + \dim_{\F_2}((K^{\times}\cap L^{\times2})/K^{\times2})\\
				& - \#\{w\in W : 2|e(b/w) \;\forall b\in \BOfl_w 
				\textup{ or } S_w\subseteq B^0\}	- 2 + m_1 + m_c.
			\end{align*}			
		\end{lemma}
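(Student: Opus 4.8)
The plan is to compare $\frakL_\calE$ with $L_\calE/L^{\times2}$, whose dimension is given by Proposition~\ref{prop:Filtration}, and then to measure the defect coming from functions in $K^\times$ that lie in $L_\calE$. From Proposition~\ref{prop:Filtration} and the definition $g(B)=\sum_i g(B_i)+1-h^0(B)$ (so that $\dim_{\F_2}\Jac(B)[2]=2\sum_i g(B_i)=2g(B)+2h^0(B)-2$), I would first record
\[
\dim_{\F_2}\bigl(L_\calE/L^{\times2}\bigr)=\bigl(2g(B)+2h^0(B)-2\bigr)+b_1(\Gamma)+m_1+m_c\,.
\]
Since $L^{\times2}\subseteq L_\calE$, one has $L_\calE\cap K^\times L^{\times2}=(K^\times\cap L_\calE)L^{\times2}$, so the projection $L^\times\to\frakL$ induces a short exact sequence
\[
0\to \frac{K^\times\cap L_\calE}{K^\times\cap L^{\times2}}\to \frac{L_\calE}{L^{\times2}}\to \frakL_\calE\to 0\,.
\]
This reduces the computation to $\dim_{\F_2}(K^\times\cap L_\calE)/K^{\times2}$, since $\dim_{\F_2}(K^\times\cap L^{\times2})/K^{\times2}$ is one of the terms already present in the asserted formula.

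The core claim is that $\dim_{\F_2}(K^\times\cap L_\calE)/K^{\times2}=2g(W)+\#\calN$, where $\calN=\{w\in W: 2\mid e(b/w)\ \forall b\in\BOfl_w\ \text{or}\ S_w\subseteq B^0\}$. To prove it I would study the homomorphism $\divv_W\bmod 2\colon K^\times/K^{\times2}\to\Div(W)\otimes\Z/2$. Because $k$ is separably closed, its kernel is $\{a:\divv_W(a)\in 2\Div(W)\}/K^{\times2}\cong\Jac(W)[2]$, of dimension $2g(W)$, and its image is the full degree-zero subgroup of $\Div(W)\otimes\Z/2$. Every $a$ in the kernel satisfies $\divv_\Bfl(a)\in 2\Div(\Bfl)$, hence $\nu_*\divv_\Bfl(a)\in 2\Div(\BOfl)$, so $a\in L_\calE$ (take the vertical functions trivial and $n_1=n_c=0$). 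Thus $\Jac(W)[2]\subseteq (K^\times\cap L_\calE)/K^{\times2}$ contributes the summand $2g(W)$.

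It remains to show that the image $\calD:=\divv_W(K^\times\cap L_\calE)\bmod 2$ has dimension $\#\calN$. Since $[L:K]=2g(C)+2$ is even, $\Norm_{L/K}(a)\in K^{\times2}$ for $a\in K^\times$, so by the last assertion of Proposition~\ref{prop:Filtration} every $a\in K^\times\cap L_\calE$ lies in $G_1$; in particular its class is represented with $n_c=0$, i.e.\ $\nu_*\divv_\Bfl(a)$ is a multiple of $B^0_{w_1}$ modulo $2\Div(\BOfl)$ together with the contributions of the vertical components. Writing $\divv_W(a)\equiv\sum_w m_w\,w$, a fibrewise analysis of this condition (using the definition of $\kk(B)_\calE$, and Lemma~\ref{lem:L_calE} at the fibres with $S_w\subseteq B^0$) shows: $m_w=0$ for $w\notin\calN\cup\{w_1\}$, since such a fibre carries a non-nodal point of $\BOfl_w$ of odd ramification; $m_w$ is unconstrained for $w\in\calN$; and $m_{w_1}$ is free because $\nu_*\divv_\Bfl(a)$ is automatically proportional to $B^0_{w_1}$ over $w_1$ (as $X^0_{w_1}$ is reduced and irreducible). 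With the single relation $\deg\divv_W(a)\equiv 0$, this identifies $\calD$ with the even-degree divisors supported on $\calN\cup\{w_1\}$, of dimension $(\#\calN+1)-1=\#\calN$. Combining the two short exact sequences and substituting then yields the stated formula.

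I expect the main obstacle to be the image computation $\dim_{\F_2}\calD=\#\calN$: one must establish both the support bound (that no $w\notin\calN\cup\{w_1\}$ can occur) and the achievability (that every even-degree divisor supported on $\calN\cup\{w_1\}$ is realized by some $a\in K^\times\cap L_\calE$), and both require careful handling of the nodes of $B^0$ and of the fibres with $S_w\subseteq B^0$, where the horizontal divisor of $a$ can only be corrected into $\langle\frakS\cap B^0,B^0_{w_1}\rangle$ by a suitable choice of the functions $f_i$ on the vertical components. The bookkeeping ensuring $m_{w_1}$ is genuinely free is the other delicate point.
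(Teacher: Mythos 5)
Your proposal is correct and follows essentially the same route as the paper: both compute $\dim_{\F_2}(L_\calE/L^{\times 2})$ from Proposition~\ref{prop:Filtration} and then subtract the dimension of the kernel of $L_\calE/L^{\times 2}\to\frakL_\calE$, identified with $(K^\times\cap L_\calE)/(K^\times\cap L^{\times 2})$ and computed via the characterization of when a function $a\in K^\times$ lies in $L_\calE$ in terms of $\divv_W(a)$. The only difference is one of exposition: the paper simply cites the proof of Lemma~\ref{lem:L_calE} for that characterization, whereas you carry out the fibrewise analysis (including the degree relation that makes $m_{w_1}$ absorb the parity constraint) explicitly.
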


		\begin{proof}
			By Proposition~\ref{prop:Filtration}
			\[
				\dim_{\F_2}\left(\frac{L_\calE}{L^{\times 2}}\right) = 2g(B)  + 2h^0(B) - 2 + b_1(\Gamma) + m_1 +m_c
			\]
			(recall that $g(B) := \sum g(B_i) + 1 - h^0(B)$ where $B_i$ are the connected components of $B$).

			By (the proof of) Lemma~\ref{lem:L_calE}, a function $a \in K^\times$ lies in $L_\calE$ if and only if
			\[
				\divv(a) = n_{w_1}w_1 +
				\sum_{\substack{w\in W\\2|e(b/w) \;\forall b\in B^0_w \\\textup{or } S_w\subseteq B^0}}n_w w+2D,
			\]
			for some $D \in \Div(W)$ and integers $n_w\in \{0,1\}$.  Furthermore, such functions $a$, modulo squares, are in one-to-one correspondence with elements of
			\[
				\Jac(W)[2]\times \{w\in W : 2|e(b/w) \;\forall b\in B^0_w, \textup{ or } S_w\subseteq B^0
				\}.
			\]
			%It follows that $\left(G_1 \cap K^{\times}\right)/L^{\times 2}$, which is equal to $\ker\left(G_1/L^{\times 2} \to \frak{L}_\calE\right)$
			It follows that $\left(L_\calE \cap K^{\times}\right)/L^{\times 2}$, which is equal to $\ker\left(L_\calE/L^{\times 2} \to \frak{L}_\calE\right)$ has dimension,
%, has dimension
			\begin{equation}\label{eq:DimOfKernel}
				2g(W) + \#\{w\in W : 2|e(b/w) \;\forall b\in \BOfl_w 
				\textup{ or } S_w\subseteq B^0\}
				- \dim_{\F_2}\left(\frac{K^{\times}\cap L^{\times2}}{K^{\times2}}\right),
			\end{equation}
			which completes the proof.
%			If $m_c = 0$, then $G_1 = L_\calE$ and there is nothing more to prove. If $m_c = 1$, then $\divv(c)\not\in2\Div(W)$ and either $\ell_c$ or $\ell_{c,1}$ exists. Assume that $\ell_c$ exists. Then
%	\[
%		\divv(\Norm(\ell_c)) \equiv \divv(c) \not \equiv 0 \pmod{2\Div(W)}.
%	\]
%	On the other hand, $\Norm_{L/K}(\ell) \in 2\Div(W)$ for all $\ell\in K^{\times} G_1$.  Therefore, \biancachange{$\ell_cG_1\not\subset K^\times G_1$ and so $\ker(G_1/L^{\times2}\to \frakL_\calE) = \ker(L_\calE \to \frakL_\calE)$.}  If $\ell_c$ does not exist, then the same argument may instead be applied to $\ell_{c,1}$.
		\end{proof}

        \begin{prop}\label{prop:dimLEmodIm}\Editnote{This proposition is new.}
            Assume that $B^0$ is connected with at worst simple singularities, $S$ is geometrically ruled, and $\Pic^0 X$ is in the kernel of the restriction morphism $\Pic X \to \Pic C$.  Then
            \[
                \dim_{\F_2}\left(\frakL_\calE/\im (x - \alpha) \right) = 
                2 + 2g(B^0) - 2g(W) - \textup{rank}(\NS X) +  m_1 + m_c   +
                \begin{cases}
                    1 & c\in K^{\times2}\\
                    0 & c\notin K^{\times2}
                \end{cases}\,.
            \]
        \end{prop}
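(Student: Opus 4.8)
The plan is to combine the inclusion $\im(x-\alpha)\subseteq\frakL_\calE$ with the two dimension formulas already established, and then to interpret the leftover terms via the fibration $X\to W$. By Theorem~\ref{thm:BrX2presentation} the map $x-\alpha$ takes values in $\frakL_{c,\calE}\subseteq\frakL_\calE$, so $\im(x-\alpha)$ is a subgroup of $\frakL_\calE$ and
\[
\dim_{\F_2}\left(\frakL_\calE/\im(x-\alpha)\right)=\dim_{\F_2}\frakL_\calE-\dim_{\F_2}\im(x-\alpha)\,.
\]
I would substitute $\dim_{\F_2}\frakL_\calE$ from Lemma~\ref{lem:dimLcalE} and $\dim_{\F_2}\im(x-\alpha)$ from Lemma~\ref{lem:dimImage}. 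In the difference the terms $\dim_{\F_2}((K^\times\cap L^{\times2})/K^{\times2})$ and the two constants $-2$ cancel, and $m_1$, $m_c$ and the $c\in K^{\times2}$ correction match the corresponding terms in the target. After this bookkeeping, the proposition reduces to the purely geometric identity
\[
\textup{rank}(\NS X)-\textup{rank}(\Pic C)=2+2g(B^0)-2g(B)-2h^0(B)-b_1(\Gamma)+N+h^0(B^\textup{fl})\,,
\]
where $N=\#\{w\in W:2\mid e(b/w)\ \forall b\in\BOfl_w\text{ or }S_w\subseteq B^0\}$.

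Next I would identify the left-hand side with the rank of the vertical N\'eron--Severi classes. Since $\Pic X\to\Pic C$ is surjective with kernel generated by the irreducible components of the fibers of $X\to W$, and $\Pic^0 X$ lies in this kernel by hypothesis, the induced map $\NS X\to\Pic C$ is surjective with kernel $\bar V$ equal to the span of the fiber-component classes; as $\Pic^0 X$ has rank $0$ this gives $\textup{rank}(\NS X)-\textup{rank}(\Pic C)=\textup{rank}(\bar V)$. By Zariski's lemma the components of a single fiber span a space whose only numerical relation is the fiber class, and distinct fibers are numerically equivalent, so $\textup{rank}(\bar V)=1+\sum_{w}(\mu_w-1)$, where $\mu_w$ is the number of irreducible components of $X_w$; only the finitely many reducible or non-reduced fibers contribute.

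The remaining, and principal, task is the fiber-by-fiber count establishing
\[
1+\sum_w(\mu_w-1)=2g(B^0)-2\textstyle\sum_i g(B_i)-b_1(\Gamma)+N+h^0(B^\textup{fl})
\]
(after rewriting $2g(B)+2h^0(B)=2\sum_i g(B_i)+2$ via the definition of $g(B)$). For each $w$ I would read off $\mu_w$ from the local structure of $B^0$ over $w$: the strict transforms of the components of $X^0_w$ (one when $X^0_w$ is reduced and irreducible or when $S_w\subseteq B^0$, two when $X^0_w$ is reduced and reducible) together with the $(-2)$-curves produced by the canonical resolution of each simple singularity of $X^0$ lying over $w$. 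The count $N$ records exactly the reducible and non-reduced fibers, while the disconnecting of $B^0$ under normalization and the exceptional trees yield the terms $2g(B^0)-2\sum_i g(B_i)$, $b_1(\Gamma)$ and $h^0(B^\textup{fl})$; reconciling these requires the arithmetic-genus relation $g(B^0)=\sum_i g(B_i)+\delta-\#\{\text{components of }B\}+h^0(B^0)$ and the expression $b_1(\Gamma)=E(\Gamma)-\#\{\text{components of }B\}+h^0(B^0)$ for the dual graph, which together tie the $\delta$-invariants and branch numbers of the singularities to the edge/vertex data of $\Gamma$.

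I expect this last fiber-by-fiber computation to be the hard part: one must track precisely how many $(-2)$-curves the resolution of each type of simple singularity of $X^0$ contributes to a given fiber and how these sit relative to the strict transform, and one must match the $\delta$-invariants and branch counts of the singularities of $B^0$ to the homology of $\Gamma$. This will likely demand a case analysis over the types of simple singularities and over whether a singular point lies on the section $\frakS$ or over a $w$ with $S_w\subseteq B^0$, and it is here that the combinatorial definition of $\Gamma$ and the genus conventions must be reconciled with the actual geometry of the canonical resolution.
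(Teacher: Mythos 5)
Your reduction is correct and follows the paper's own strategy: both lemmas are combined exactly as you describe, the $-2$'s and the $\dim_{\F_2}((K^\times\cap L^{\times 2})/K^{\times 2})$ terms cancel, and the proposition does come down to the identity
\[
\textup{rank}(\NS X)-\textup{rank}(\Pic C)=2+2g(B^0)-2g(B)-2h^0(B)-b_1(\Gamma)+N+h^0(\Bfl)\,.
\]
Your identification of $\textup{rank}(\NS X)-\textup{rank}(\Pic C)$ with the rank of the vertical classes via a Shioda--Tate count is also what the paper does. The problem is that you then stop: the entire content of the proposition lives in the ``fiber-by-fiber computation'' that you explicitly defer, so as written the proof has a genuine gap precisely where the work is.

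Moreover, the simultaneous fiber-by-fiber case analysis you anticipate (tracking which $(-2)$-curves land in which fiber, whether a singularity lies on $\frakS$, etc.) is harder than necessary. The paper decouples the two sides of the identity through the single quantity $\#\calE$, the number of exceptional curves, which then cancels. On one side, the singularity bookkeeping is done \emph{globally on $B^0$, per ADE type}, not per fiber: writing $a_n,d_n,e_6,e_7,e_8$ for the number of singularities of each type, the $\delta$-invariants give $g(B)=g(B^0)-\sum_n(a_n\lfloor\frac{n+1}{2}\rfloor+d_n\lfloor\frac{n+2}{2}\rfloor)-3e_6-4(e_7+e_8)$, the known edge contributions to $\Gamma$ ($1$ for $A_{2k+1}$, $D_{2k+1}$, $E_7$; $2$ for $D_{2k}$) give $b_1(\Gamma)$, and $\#\calE=\sum_n n(a_n+d_n)+6e_6+7e_7+8e_8$; these combine to $2g(B)+h^0(B)+b_1(\Gamma)=2g(B^0)-\#\calE+1$, with no reference to how the singularities distribute over the fibers. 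On the other side, $\textup{rank}(\NS X)=\textup{rank}(\Pic C)+\#\calE+1+\#\{w: 2\mid e(b/w)\ \forall b\in\BOfl_w,\ S_w\not\subseteq B^0\}$, and the leftover discrepancy between this count and your $N$ is exactly absorbed by $h^0(B)-h^0(\Bfl)=\#\{w:S_w\subseteq B^0\}$. If you want to complete your argument, you should either carry out this decoupled computation or genuinely perform the case analysis you sketch --- but in its current form the proof proposal establishes only the easy reductions and asserts the hard identity.
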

        \begin{remark}
            As we will make a similar argument later under the weaker assumption that $S$ is ruled, but not necessarily geometrically ruled, we will take care to point out when the geometrically ruled hypothesis is used in the proof.
        \end{remark}
        \begin{proof}
            Lemmas~\ref{lem:dimImage} and~\ref{lem:dimLcalE} (together with some rearranging) show that
            \begin{align}
                \dim_{\F_2}\left(\frac{\frakL_\calE}{\im (x - \alpha)} \right) = & 
                \;m_1 + m_c - 2g(W)+
                \begin{cases}
                    1 & c\in K^{\times2}\\
                    0 & c\notin K^{\times2}
                \end{cases}\,
                \\
                & + 2g(B) + h^0(B) + b_1(\Gamma)\label{eq:gB}\\
                & - \left(\textup{rank}\Pic C + \#\left\{w\in W : 
                2|e(b/w)\;\forall b\in\BOfl_w, S_w\not\subset B^0\right\}\right)
                \label{eq:NSX}\\
                & + h^0(B) - h^0(\Bfl) - \#\left\{w\in W : S_w\subset B^0\right\}\label{eq:zero}\,.
            \end{align}

            Let $a_n, d_n, e_6, e_7, e_8\in\Z$ denote the number of $A_n, D_n, E_6, E_7,$ and $E_8$ singularities on $B^0$ respectively (for definitions see \cite[\S II.8]{BHPvdV}).  Recall that the $\delta$-invariant of a singular point $P$ is the difference between the genus of the singular curve and the genus of the curve obtained by resolving the singularity at $P$.  It can be computed using the Milnor number and the number of branches of the singularity~\cite[Thm. 10.5]{Milnor}.  Since
            \[
                \delta(A_n) = \left\lfloor\frac{n+1}{2}\right\rfloor, \;
                \delta(D_n) = \left\lfloor\frac{n + 2}{2}\right\rfloor,\;
                \delta(E_6) = 3, \; \delta(E_7) = \delta(E_8) = 4,
            \]
            the genus of $B$ equals
            \[
                g(B^0) - \sum_n
                \left(a_n\left\lfloor\frac{n + 1}{2}\right\rfloor +
                d_n \left\lfloor\frac{n + 2}{2}\right\rfloor\right)
                - 3e_6 - 4(e_7 + e_8).
            \]
            Furthermore, singularities of type $A_{2k + 1}, D_{2k + 1}$ or $E_7$ each contribute exactly one edge to $\Gamma$, and singularities of type $D_{2k}$ each contribute two edges to $\Gamma$~\cite[Table 1, p.109]{BHPvdV}.  Moreover, $\Gamma$ has $h^0(B)$ vertices and, since $B^0$ is connected, $\Gamma$ has $1$ connected component. Therefore, $b_1(\Gamma) = \sum_{k}(a_{2k + 1} + d_{2k + 1} + 2d_{2k}) + e_7 + 1 - h^0(B)$.  Combining these facts, we have
            \[
                2g(B) + h^0(B) + b_1(\Gamma) = 2g(B^0) - \sum_nn(a_n + d_n) - 6e_6 - 7e_7 - 8e_8 + 1.
            \]
            Since $S$ is geometrically ruled, $\calE$ consists \emph{only} of exceptional curves obtained by blowing up singularities of $B^0$, and so $\#\calE = \sum_nn(a_n + d_n) + 6e_6 + 7e_7 + 8e_8$.  Therefore,~\eqref{eq:gB} simplifies to
            \[
                2g(B^0) - \#\calE + 1.
            \]
            
            Since $\Pic^0 X$ becomes trivial when restricted to $C$, we have a surjective map $\NS X \to \Pic C$.  Therefore, $\rank \NS X = \rank \Pic C + \rank \ker\left(\NS X \to \Pic C\right)$. Further, since $S$ is geometrically ruled, we have
            \[
                \rank \NS X = \rank \Pic C + 
                \#\calE + 1 + \#\{w\in W : 2|e(b/w) \;
                \forall b\in \BOfl_w, \;
                S_w\not\subseteq B^0\},
            \]
            and so~\eqref{eq:NSX} simplifies to
            \[
                - \left(\rank\NS X - \#\calE - 1\right).
            \]
            
            Finally we note that, by definition of $\Bfl$,~\eqref{eq:zero} simplifies to $0$, so
            \[
                \eqref{eq:gB} + \eqref{eq:NSX} + \eqref{eq:zero} = 
                2g(B^0) - \#\calE + 1 - \left(\rank\NS X - \#\calE - 1\right)
                = 2 + 2g(B^0) - \rank\NS X,
            \]
            as desired.
        \end{proof}

        \begin{lemma}\label{lem:LowerBound}\Editnote{This lemma is new.}
			$\dim_{\F_2}(\frak{L}_{c,\calE}) \ge \dim_{\F_2}(\frak{L}_{\calE}) - 2g(W).$
		\end{lemma}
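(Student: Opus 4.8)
The plan is to rephrase the inequality as a bound on the codimension of $\frakL_{c,\calE}$ inside $\frakL_\calE$ and to extract this codimension from the norm map. Since $\dim_K L = \deg(\Bfl/W)$ is even, the homomorphism $\Norm_{L/K}$ descends to a well-defined map $N\colon\frakL\to K^\times/K^{\times2}$ (replacing $\ell$ by $a\ell^2$ with $a\in K^\times$ multiplies the norm by $a^{\dim_K L}\in K^{\times2}$). By definition $\frakL_{c,\calE}=\{\overline\ell\in\frakL_\calE : N(\overline\ell)\in\langle\overline c\rangle\}$, so, writing $W_0:=N(\frakL_\calE)$ for the image, the first isomorphism theorem gives
\[
\frakL_\calE/\frakL_{c,\calE}\;\cong\;W_0/\bigl(W_0\cap\langle\overline c\rangle\bigr).
\]
It therefore suffices to prove $\dim_{\F_2}\bigl(W_0/(W_0\cap\langle\overline c\rangle)\bigr)\le 2g(W)$.

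The two inputs are the following. First, let $\kappa$ denote the kernel of the reduction map $\divv\colon K^\times/K^{\times2}\to\Div(W)\otimes\Z/2$. A class $\overline a$ lies in $\kappa$ exactly when $\divv(a)=2D$, and sending $\overline a\mapsto[D]$ identifies $\kappa$ with $\Pic(W)[2]=\Jac(W)[2]$; here I use that $k^\times=k^{\times2}$ because $k$ is separably closed of characteristic different from $2$. Hence $\dim_{\F_2}\kappa=2g(W)$. Second, Proposition~\ref{prop:Filtration} computes $\divv\circ\Norm_{L/K}$ on $L_\calE$, with image $\langle m_c\divv(c)\rangle$. Passing to classes, $\divv(W_0)=\langle m_c\divv(c)\rangle$ has dimension $m_c$ (recall $m_c=1$ forces $\divv(c)\notin2\Div(W)$), so from the short exact sequence $0\to W_0\cap\kappa\to W_0\xrightarrow{\divv}\divv(W_0)\to0$ we obtain $\dim W_0=\dim(W_0\cap\kappa)+m_c\le 2g(W)+m_c$.

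When $m_c=0$ this already gives $\dim W_0\le 2g(W)$, hence the claim. The crux is the case $m_c=1$, where the bound on $\dim W_0$ is one too large; the difficulty is that Proposition~\ref{prop:Filtration} controls only the \emph{divisor} of the norm modulo $2$, not its class in $K^\times/K^{\times2}$, the ambiguity being precisely an element of $\kappa\cong\Jac(W)[2]$. I resolve this with a short dichotomy. Choose $\overline n\in W_0$ with $\divv(\overline n)=\divv(c)$, which exists since $\divv(W_0)\ni\divv(c)$; then $\overline n\,\overline c\in\kappa$. If $\overline n\,\overline c\in W_0$, then $\overline c=\overline n\cdot(\overline n\,\overline c)\in W_0$, so $W_0\cap\langle\overline c\rangle=\langle\overline c\rangle$ is one-dimensional and $\dim\bigl(W_0/(W_0\cap\langle\overline c\rangle)\bigr)=\dim W_0-1\le 2g(W)$. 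Otherwise $\overline n\,\overline c\in\kappa\setminus W_0$, so $W_0\cap\kappa\subsetneq\kappa$, whence $\dim(W_0\cap\kappa)\le 2g(W)-1$ and $\dim W_0\le 2g(W)$; discarding the intersection term then still yields $\dim\bigl(W_0/(W_0\cap\langle\overline c\rangle)\bigr)\le\dim W_0\le 2g(W)$. In either case the desired inequality follows.
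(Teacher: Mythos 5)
Your proof is correct and follows essentially the same route as the paper's: both arguments reduce the lemma to an analysis of the norm map $\frakL_\calE \to K^\times/K^{\times 2}$, using that $\ker(\divv)\cong\Jac(W)[2]$ has dimension $2g(W)$ together with Proposition~\ref{prop:Filtration}'s identification of the image of $\divv\circ\Norm_{L/K}$ on $L_\calE$ with $\langle m_c\divv(c)\rangle$, and both then repair an off-by-one in the borderline case $m_c=1$. The only difference is organizational: you work downstairs with the image $W_0$ of the norm and settle the borderline case by the dichotomy $\overline{c}\in W_0$ or not, whereas the paper works upstairs with the kernel $\frakL_\calE\cap\frakL_1$ and, when the image of $G_1/K^\times L^{\times2}$ is all of $\Jac(W)[2]$, exhibits the explicit extra element $\overline{\ell_c}/\overline{\ell}$ of $\frakL_{c,\calE}$.
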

		
		\begin{proof}
			Proposition~\ref{prop:Filtration} gives rise to a commutative and exact diagram,
%			For any $\ell \in G_1 \subset L_\calE$ the divisor of $\Norm_{L/K}(\ell)$ is a double. Further, by the last paragraph of the proof of Lemma~\ref{lem:dimLcalE}, $\frakL_{\calE}/K^{\times}G_1 \isom (\Z/2)^{m_c}$. This gives rise to a commutative and exact diagram,
			\[
				\xymatrix{
					0 \ar[r] 
					&\displaystyle \frac{G_1}{K^\times L^{\times 2}} \ar[d] \ar[r]
					&\frak{L}_\calE \ar[r]\ar[d]^{\Norm_{L/K}}
					&(\Z/2)^{m_c} \ar@{^{(}->}[d]^{1 \mapsto \divv(c)} 
                    {\ar[r]} & 0\\
					0 \ar[r] & \Jac(W)[2] \ar[r]& K^\times/K^{\times 2} \ar[r]& \Div(W)\otimes \Z/2 \ar[r] &0\,.	
				}
			\]
            Let $I$ denote the image of the vertical map on the left. Then by the snake lemma we have
            \begin{equation*}
             \dim_{\F_2}\frakL_{c,\calE} 
             \geq \dim_{\F_2}\frakL_\calE\cap\frakL_1
             =  \dim_{\F_2}\left(G_1/K^{\times}L^{\times2}\right)\cap\frakL_1
             = \dim_{\F_2}\frakL_\calE - m_c - \dim_{\F_2} I.
            \end{equation*}
           The lemma now follows unless $m_c + \dim_{\F_2} I > 2g(W)$, i.e., unless $m_c=1$ and $I = \Jac(W)[2]$. In this case either $\ell_c$ or $\ell_{c,1}$ exists.  Assume that $\ell_c$ exists and let $a = c\Norm_{L/K}(\ell_c)$. Since $\divv(c) \equiv \divv(\Norm_{L/K}(\ell_c))\bmod{2\Div(W)}$, $\overline{a}$ lies in the image of $\Jac(W)[2]\to K^{\times}/K^{\times2}$.  Since $I = \Jac(W)[2]$, there exists some $\overline{\ell}\in G_1/K^{\times}L^{\times2}$ with $\Norm_{L/K}(\overline{\ell}) = \overline{a}$.  Therefore, $\overline{\ell_c}/\overline{\ell}\in \frakL_{c,\calE}\setminus \frakL_\calE\cap\frakL_1$, and so we conclude that
            \begin{equation*}
             \dim_{\F_2}\frakL_{c,\calE} 
             = 1 + \dim_{\F_2}\frakL_\calE\cap\frakL_1
             = 1 + \dim_{\F_2}\frakL_\calE - m_c - \dim_{\F_2} I = \dim_{\F_2}\frakL_\calE - 2g(W).
            \end{equation*}
            If $\ell_c$ does not exist, then the same argument may instead be applied to $\ell_{c,1}$.
		\end{proof}

		\begin{proof}[Proof of Theorem~\ref{thm:sizeBr}]\Editnote{This proof has been rewritten.}
            We first use Proposition~\ref{prop:dimLEmodIm} to obtain bounds on $\dim_{\F_2}\left(\frakL_\calE/\im (x - \alpha) \right)$.
            Since $B^0$ is connected and $B_{w_1}$ has even degree, Proposition~\ref{prop:Filtration} shows that $m_1 = 1$.  If $c\in K^{\times2}$ then $\divv(c) \in 2\Div(W)$, and the converse holds if $g(W) = 0$.  Further, since $S$ is geometrically ruled and $B^0$ is connected, $\deg(\frakS\cap B^0) = \vec{0}$.  Hence, by Proposition~\ref{prop:Filtration} we have
            \[
                0 \leq m_c + \begin{cases}
                    1 & c\in K^{\times2}\\
                    0 & c\notin K^{\times2}
                \end{cases} \leq 1,
            \]
            with the upper bound an equality if $g(W) = 0$.
            Combining this with Proposition~\ref{prop:dimLEmodIm}, we obtain
            \begin{equation}\label{eq:UpperLE}
                0 \leq \dim_{\F_2}\left(\frakL_\calE/\im (x - \alpha) \right) - 
                \left(2g(B^0) - 2g(W)  + 3 - \textup{rank}(\NS X)\right) \leq 1\,,
            \end{equation}
            with the upper bound an equality if $g(W) = 0$.
            
            Now we will relate $\dim_{\F_2}\Br X[2]$ to $\dim_{\F_2}\left(\frakL_\calE/\im (x - \alpha) \right)$.
            Corollary~\ref{cor:BrX2presentation} readily yields,
            \[
                \dim_{\F_2}\Br X[2]
                = \dim_{\F_2}\frakL_{c,\calE} - \dim_{\F_2}\im (x - \alpha)
                \le \dim_{\F_2}\frakL_{\calE} - \dim_{\F_2}\im (x - \alpha)\,,
            \]
            which combined with~\eqref{eq:UpperLE} gives the desired upper bound.
            Finally, Lemma~\ref{lem:LowerBound} together with~\eqref{eq:UpperLE} yields the desired lower bound.
            % \begin{align*}
            %     \dim_{\F_2}\Br X[2] & \geq
            %     \dim_{\F_2}\frakL_{\calE} - \dim_{\F_2}\im (x - \alpha) - 2g(W)\\
            %     &\geq 2g(B^0) - 4g(W)  + 3 - \textup{rank}(\NS X)
            % \end{align*}
		\end{proof}

	%%%%%%%%%%%%%%%%%%%%%%%%%%%%%%%%%%%%%%%%%%%%%%%%%%%%%%%%%%%%%%%%%%%%%%%%%%%%
	\section{The Brauer group of an Enriques surface}\label{sec:Enriques}%%%%%%%
	%%%%%%%%%%%%%%%%%%%%%%%%%%%%%%%%%%%%%%%%%%%%%%%%%%%%%%%%%%%%%%%%%%%%%%%%%%%%
		An \defi{Enriques surface} is a smooth projective minimal surface ${E}$ with nontrivial $2$-torsion canonical divisor and with irregularity ${h^1(\OO_E) = 0}$.  Equivalently, an Enriques surface is a quotient of a K$3$ surface by a fixed point free involution.  The Brauer group of any Enriques surface (over a separably closed field) is isomorphic to $\Z/2\Z$~\cite[p. 3223]{HS-Enriques}. In this section we compute a central simple algebra representing the nontrivial class.
		
		We shall see below that every Enriques surface is birational to a double cover of a ruled surface{ whose branch locus has at worst simple singularities.}  This implies that every Enriques surface is birational to a double cover of a \emph{geometrically} ruled surface; however, the branch locus of this double cover may have worse singularities.  We will find it more convenient to adapt the methods of the previous sections to ruled surfaces which fail to be geometrically ruled.
		
		%%%%%%%%%%%%%%%%%%%%%%%%%%%%%%%%%%%%%%%%%%%%%%%%%%%%%%%%%%%%%%%%%%%%%%%%
		\subsection{Horikawa's representation of Enriques surfaces}%%%%%%%%%%%%%
		\label{subsec:Horikawa}%%%%%%%%%%%%%%%%%%%%%%%%%%%%%%%%%%%%%%%%%%%%%%%%%
		%%%%%%%%%%%%%%%%%%%%%%%%%%%%%%%%%%%%%%%%%%%%%%%%%%%%%%%%%%%%%%%%%%%%%%%%
			 Let $E$ be an Enriques surface and let $\tilde E$ be its K3 double cover. Horikawa's representation of Enriques surfaces~\cite[Chap VIII, Props. 18.1, 18.2]{BHPvdV} shows that $\tilde{E}$ is the minimal resolution of a double cover of a quadric surface $\tilde{S}^0\subseteq\PP^3$ branched over a reduced curve $\tilde{B}^0$, which has at worst simple singularities, and which is obtained by intersecting a quartic hypersurface with $\tilde{S}^0$.  Furthermore, the covering involution $\sigma\colon \tilde{E} \to \tilde{E}$ for the quotient $\tilde{E}\to E$ descends to the involution 
			\[
				\tau\colon\PP^3\to \PP^3\,, \quad (z_0:z_1:z_2:z_3)\mapsto(z_0:-z_1:-z_2:z_3)\,.
			\]
			Therefore, $\tilde{B}^0$ is invariant and fixed point free under the action of $\tau$.  Under the Horikawa representation, we may take $\tilde{S}^0$ to be the quadric cone $V(z_0z_3 - z_1^2)$ if $E$ is \defi{special} and $\tilde{S}^0 = V(z_0z_3 - z_1z_2)$ otherwise. (An Enriques surface is \defi{special} if it is endowed with the structure of an elliptic pencil together with a $(-2)$-curve which is a $2$-section, and \defi{nonspecial} otherwise.)  If $\tilde{S}^0$ is non-singular, then the {morphism}
			\[
				\tilde{S}^0 \to \PP^1_{\tilde{t}}\,, \quad \vec{z}\mapsto (z_1:z_0) = (z_3:z_2)
			\]
			shows that $\tilde{S} := \tilde{S}^0$ is a rational geometrically ruled surface.  If $\tilde{S}^0$ is the quadric cone, then the rational map
			\[
				\tilde{S}^0\dasharrow \PP^1_{\tilde{t}}\,, \quad \vec{z}\mapsto (z_1:z_0) = (z_3:z_1)\,.
			\]
			shows that the blow up $\tilde{S} := \textup{Bl}_{(0:0:1:0)}(\tilde{S}^0)$ is a rational geometrically ruled surface. In either case, $\tilde{E}$ is birational to the double cover $\tilde{X}^0$ of $\tilde{S}$ branched over $\tilde{B}^0$ (where we abuse notation using $\tilde{B}^0$ to denote its strict transform in $\tilde{S}$). 		

			We may embed $\tilde{S}/\tau$ in $\PP^4$ as the vanishing of $V(w_0w_3 - w_4^2, w_1w_2 - w_4^2)$.  Under this embedding, the morphism $\tilde{S} \to \tilde{S}/\tau$ is given by $(z_0:z_1:z_2:z_3)\mapsto (z_0^2:z_1^2:z_2^2:z_3^2:z_0z_3)$.  The ruling on $\tilde{S}$ induces a map $\varpi\colon \tilde{S}/\tau\to\PP^1_t, \mathbf{w}\mapsto (w_1:w_0) = (w_3 : w_2)$ giving $\tilde{S}/\tau$ the structure of a ruled surface. {Note that the coordinates $\tilde t$ and $t$ on the two copies of $\PP^1$ are related by $\tilde{t} = \sqrt{t}$.}
			
			Let $S := \textup{Bl}_{\textup{Sing}(\tilde{S}/\tau)}(\tilde{S}/\tau)$. Then $E$ is birational to the double cover $X^0$ of $S$ branched over the exceptional divisors on $S$ and the strict transform $B^0$ of $\tilde{B}^0/\tau$. We let $X$ and $\tilde X$ be the desingularizations of $X^0$ and $\tilde X^0$ obtained by canonical resolutions. Observe that, in agreement with the convention set in \S\ref{subsec:SurfacesNotation}, $B^0$ contains all connected components of the branch locus that map dominantly to $\PP^1$. We may thus avail ourselves of the notation and results established in the previous sections for both $X/S$ and $\tilde{X}/\tilde{S}$, using tildes to denote objects corresponding to $\tilde X$.
		
			\begin{remark}
			As a caution, we note that $S$ is not geometrically ruled; the fibers above $0$ and $\infty$ consist of a chain of three arithmetic genus $0$ curves with the center curve appearing with multiplicity $2$. 
			 \end{remark}
			
		%%%%%%%%%%%%%%%%%%%%%%%%%%%%%%%%%%%%%%%%%%%%%%%%%%%%%%%%%%%%%%%%%%%%%%%%
		\subsection{The Brauer group}%%%%%%%%%%%%%%%%%%%%%%%%%%%%%%%%%%%%%%%%%%%
		%%%%%%%%%%%%%%%%%%%%%%%%%%%%%%%%%%%%%%%%%%%%%%%%%%%%%%%%%%%%%%%%%%%%%%%%
			\begin{theorem}\label{thm:BrauerEnriques}
				For every $[D]\in\Jac(\Bfl)[2]$ and cycle $\calC$ on the dual graph of $B^0$, the algebra $\gamma'(\ell_{\calC}\ell_D)$ lies in $\Br X$.  Moreover, these elements generate $\Br X$.
			\end{theorem}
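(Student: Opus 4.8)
The plan is to reduce the statement to two claims: that each $\gamma'(\ell_\calC\ell_D)$ lies in $\Br X$, and that the subgroup they generate is all of $\Br X$. Since $X$ is a smooth projective surface birational to the Enriques surface $E$, and the cohomological Brauer group is a birational invariant of smooth projective surfaces (being invariant under blow-ups), we have $\Br X\cong\Br E\cong\Z/2\Z$. Thus the generation claim reduces to exhibiting a \emph{single} nontrivial class among the $\gamma'(\ell_\calC\ell_D)$. Throughout I will use that the products $\ell_\calC\ell_D$ (with $\ell_D\in G_3$ and $\ell_\calC\in G_2$ as in Proposition~\ref{prop:Filtration}) generate the image of $G_2$ in $\frakL$, so the subgroup generated by the $\gamma'(\ell_\calC\ell_D)$ is exactly $\gamma'(G_2)$.

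To prove membership in $\Br X$ I would proceed in three stages. \textbf{Membership in $\Br C$:} since $\ell_D$ and $\ell_\calC$ lie in $G_1$, Proposition~\ref{prop:Filtration} shows that $\divv\circ\Norm_{L/K}$ annihilates their product; because $W=\PP^1$ and $k$ is separably closed of characteristic $\neq 2$ (so that $\Norm_{L/K}(\ell_\calC\ell_D)$ is a square as soon as its divisor is), this gives $\Norm_{L/K}(\ell_\calC\ell_D)\in K^{\times2}$, hence $\overline{\ell_\calC\ell_D}\in\frakL_1\subseteq\frakL_c$, and Theorem~\ref{thm:EvenHypThm} yields $\gamma'(\ell_\calC\ell_D)\in\Br C$. \textbf{Unramifiedness outside $\calE$:} since $\ell_\calC\ell_D\in G_2\subseteq L_\calE$ has even divisor on $B^0$, conditions~\eqref{eq:unram-irred} and~\eqref{eq:unram-red} hold at every $w$ with $S_w$ smooth, so Corollary~\ref{cor:UnramifiedOutsideE} places $\gamma'(\ell_\calC\ell_D)$ in $\Br(X\setminus\calE)$.

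The third stage, unramifiedness along $\calE$, contains the main difficulty. Here $\calE$ splits into the $(-2)$-curves lying over the simple singularities of $X^0$, which are dispatched by Proposition~\ref{prop:-2curves}, and the curves lying over the two singular fibers $S_0,S_\infty$ of the merely ruled surface $S$. The latter are the real obstacle: because $S$ is not geometrically ruled, these fibers are chains of three rational curves, the central one of multiplicity two and lying in the branch locus, so neither Proposition~\ref{prop:UnramifiedConditions} nor the statement of Proposition~\ref{prop:-2curves} applies verbatim. To treat them I would rerun the Bloch--Ogus/Kato complex argument from the proof of Proposition~\ref{prop:-2curves}: the fiber components again form a tree of rational curves, so once the class is known to be unramified at every non-contracted curve meeting the configuration (established in the previous stage), the vanishing of residue-sums at the codimension-two points together with the rationality of each component propagates unramifiedness inward from the leaves. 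The delicate point is the local analysis at the multiplicity-two central component, where the ramification of $X^0\to S$ along it must be incorporated; the evenness of $\nu_*\divv(\ell_\calC\ell_D)$ at the points of $B^0$ over $0,\infty$ is precisely the input that makes the residue vanish.

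Finally, for generation it remains to show $\gamma'(G_2)\neq 0$. I would do this by a dimension count in the spirit of Section~\ref{sec:sizeBr}, adapted to the ruled (but not geometrically ruled) surface $S$ as anticipated in the remark preceding Proposition~\ref{prop:dimLEmodIm}: comparing
$\dim_{\F_2}\gamma'(G_2)=\dim_{\F_2}(\operatorname{im}G_2)-\dim_{\F_2}\bigl(\im(x-\alpha)\cap\operatorname{im}G_2\bigr)$
against the known value $\dim_{\F_2}\Br X[2]=1$ forces the image to be nontrivial. Equivalently, one checks that the extra generators $\ell_1,\ell_c$ of $L_\calE/G_2$ contribute nothing modulo $\im(x-\alpha)$, so that $\gamma'(G_2)$ already exhausts the unramified classes and hence equals $\Br X[2]\cong\Z/2\Z$. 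The heart of the whole argument is therefore the residue computation at the non-geometrically-ruled fibers in the third stage; everything else is an application of the machinery already assembled in Sections~\ref{sec:UnramConditions}--\ref{sec:sizeBr}.
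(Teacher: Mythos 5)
Your overall architecture is sound: reducing generation to exhibiting one nontrivial class via $\Br X\cong\Br E\cong\Z/2\Z$, getting membership in $\Br C$ from $\overline{\ell_\calC\ell_D}\in\frakL_c$ and Theorem~\ref{thm:gammawelldefined}, dispatching the vertical curves over smooth fibers $S_w$ via Proposition~\ref{prop:UnramifiedConditions} and the $(-2)$-curves over simple singularities via Proposition~\ref{prop:-2curves}, and closing with a dimension count against $\im(x-\alpha)$ — all of this matches the paper. But the case you yourself identify as ``the real obstacle,'' namely the components of $X$ lying over the non-reduced fibers $S_0$ and $S_\infty$, is exactly where your proposal stops being a proof. ``Rerun the Bloch--Ogus argument'' does not go through as stated: the induction in Proposition~\ref{prop:-2curves} needs a tree of rational curves with a leaf all of whose other neighbours are already known to be unramified, and the configuration over $t=0$ (the preimages of the two exceptional curves of $S\to\tilde{S}/\tau$, the preimage of the multiplicity-two central component, and the resolution curves) need not have this shape — if the preimage of the central component is reducible the dual graph acquires a cycle, and the two ends of the chain are themselves curves in $\calE$ whose residues are not yet known, so there is no leaf to start from. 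Your closing assertion that ``the evenness of $\nu_*\divv(\ell_\calC\ell_D)$ \ldots is precisely the input that makes the residue vanish'' at the multiplicity-two component is a hope, not an argument: the residue formula~\eqref{eq:resatF} and the analysis following it were derived only for smooth fibers $S_w$, and no substitute local model is supplied here.

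The paper's actual device for these curves — and the key idea missing from your proposal — is to pass to the K3 double cover $\tilde{X}\to X$. One writes
\[
\Res\bigl(\gamma'(\ell_\calC\ell_D)\bigr)=\tilde\gamma'\bigl(\Res(\ell_\calC\ell_D)\bigr)+\bigl(\tilde{t},\Cor(\ell_\calC\ell_D)\bigr)_2,
\]
observes that the second term is trivial because $\ell_\calC\ell_D\in\frakL_1$, and that $\Res(\ell_\calC\ell_D)=\ell_{\tilde\calC}\ell_{\tilde D}$; since $\tilde{S}$ \emph{is} geometrically ruled and $\tilde{B}^0$ has at worst simple singularities, Corollary~\ref{cor:BrX2presentation} applies upstairs. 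Unramifiedness then descends to $X$ for every vertical $F$ whose preimage in $\tilde{X}$ is two copies of $F$ — which holds automatically when $F$ is rational, in particular for the components over $t=0,\infty$ when $B^0$ has a singularity there; the remaining subcases (exceptional divisors of $S$, where $x-\alpha$ reduces to a constant, and the central component when $B^0$ is smooth over $0$, where after adjusting $D$ by a principal divisor the residue is a constant) are handled by direct computation. Without this restriction-to-$\tilde{X}$ step, or a genuine replacement for it, the membership claim $\gamma'(\ell_\calC\ell_D)\in\Br X$ is not established.
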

			
			\begin{proof}
				Fix a cycle $\calC$ and a divisor $D$ on $\Bfl$ whose divisor class is $2$-torsion.  We claim that $\partial_{F}(\gamma'(\ell_{\calC}\ell_D))\in \kk(F)^{\times2}$ for all reduced and irreducible curves $F \subseteq X$, and thus that $\gamma(\ell_{\calC}\ell_D)\in \Br X$.  If $F$ is a horizontal curve then this follows from Theorem~\ref{thm:gammawelldefined}, since $\overline{\ell_\calC\ell_D} \in \frakL_c$.  Now assume that $F$ is a vertical curve.  If $F$ does not map dominantly to the reduced part of a component of $S_0$ or $S_{\infty}$, then the preimage of $F$ in $\tilde{X}$ consists of exactly two curves $F_1, F_2$, each of which are isomorphic to $F$.  We will show that $\partial_{F_i}(\textup{Res}(\gamma'(\ell_{\calC}\ell_D)))\in \kk(F_i)^{\times2}$, and thus conclude that  $\partial_{F}(\gamma'(\ell_{\calC}\ell_D))\in \kk(F)^{\times2}$.  Since $\kk(\tilde{B}) = \kk(\tilde{B}/\tau)\otimes_{k(t)}k(\sqrt{t})$, we have
				\[
					\textup{Res}(\gamma'(\ell_{\calC}\ell_D)) = 
					\Cor_{\kk(\tilde{X}_{\tilde{B}})/\kk(\tilde{X})}\left(
					\textup{Res}\left((x - \alpha, \ell_{\calC}\ell_D)_2\right)
					\right).
				\]
				We may choose our coordinates $x$ and $\alpha$ on the Enriques surface so that $\Res(x) = \tilde{x}/\tilde{t}$ and $\Res(\alpha) = \tilde{\alpha}/\tilde{t}$, where $\tilde{x}$ and $\tilde{\alpha}$ are the functions on $\tilde{E}$ and $\tilde{B}$.  Therefore
				\[
					\Cor_{\kk(\tilde{E}_{\tilde{B}})/\kk(\tilde{E})}\left(
					\textup{Res}\left((x - \alpha, \ell_{\calC}\ell_D)_2\right)
					\right) = 
					\tilde\gamma'(\Res(\ell_{\calC}\ell_D)) + (\tilde{t}, \Cor(\ell_{\calC}\ell_D))_2.
				\]
				Since $\ell_{\calC}\ell_D\in \frakL_1$, the algebra $(\tilde{t}, \Cor(\ell_{\calC}\ell_D))_2$ is trivial in the Brauer group.  Furthermore, by construction $\Res(\ell_{\calC}\ell_D) = \ell_{\tilde{\calC}}\ell_{\tilde{D}}$ for some cycle $\tilde\calC$ on the dual graph of $\tilde{B}$ and some two-torsion divisor $\tilde{D}$ on $\tilde{B}$.  Hence, by applying Corollary~\ref{cor:BrX2presentation} to $\tilde{X}/\tilde{S}$ it follows that $\partial_{F_i}(\gamma'(\ell_{\calC}\ell_D))\in \kk(F_i)^{\times2}$

				Now assume that $F$ maps dominantly to the reduced part of a component of $S_0$. If $F$ maps dominantly to an exceptional divisor of $S$, then $x-\alpha$ has trivial valuation and reduces to a constant on all curves $F'$ that lie above $F$ in the desingularization of $X\times_{\PP^1} B$. Therefore, $\partial_{F}(\gamma'(\ell_{\calC}\ell_D))\in \kk(F)^{\times2}$.  Now consider the case when $F$ maps dominantly to the reduced part of $(\tilde{S}/\tau)_0$. If the singular locus of $B^0$ is supported away from the fibers of $0$, then, after adjusting $D$ by a principal divisor, we may assume that $v_{b}(\ell_{\calC}\ell_{D}) = 0$ for all $b\in B\cap(\varpi^{-1}(0))$. Then $\partial_{F}(\gamma'(\ell_{\calC}\ell_D))$ is a constant and so it is clear that $\partial_{F}(\gamma'(\ell_{\calC}\ell_D))\in \kk(F)^{\times2}$.  If there is a singularity of $B^0$ lying over $0$, then $F$ must be rational. Since there are no nontrivial \'etale covers of a rational curve, the preimage of $F$ in $\tilde{X}$ consists of exactly two curves $F_1, F_2$, each of which are isomorphic to $F$, and we may apply the same argument used above.  The case where $F$ maps dominantly to a component of $(S_\infty)_{\textup{red}}$ follows similarly.
								
				We have shown that the subspace of $\frakL_c$ generated by the $\ell_{\calC}$ and the $\ell_D$ maps into $\Br X$; now we will use a cardinality argument to show that the image of this subspace is all of $\Br X$. Arguing as in the proof of \edited{Lemma~\ref{lem:dimLcalE}} we see that the functions $\ell_{\calC}$ and $\ell_{D}$ generate a subspace of $\frakL' \subseteq \frakL$ of $\F_2$-dimension 
				\begin{align*}
					\dim_{\F_2}\frakL' &=
					2g(B)+2h^0(B) + b_1(\Gamma) -1
					+ \dim_{\F_2}\left(\frac{K^{\times}\cap 
					L^{\times2}}{K^{\times2}}\right)\\				
					&\quad\quad-\#\{w\in\PP^1 : 2|e(b/w) \;\forall b\in \BOfl_w
					\textup{ or }S_w\subseteq B^0\}\,.
					\intertext{Using that $h^0(B) = h^0(\Bfl) + \#\{w\in \PP^1 : S_w\subseteq B^0\}$, we get}
					\dim_{\F_2}\frakL' &=
					2g(B)+h^0(B) + b_1(\Gamma) -1 + h^0(\Bfl)
					+ \dim_{\F_2}\left(\frac{K^{\times}\cap 
					L^{\times2}}{K^{\times2}}\right)\\					
					&\quad\quad-\#\{w\in\PP^1 : 2|e(b/w) \;\forall b\in \BOfl_w
					\textup{ and }S_w\not\subseteq B^0\}\,.
					\intertext{Since $\tilde{B}^0$ is connected and has at most simple singularities, the same is true for $B^0$.  Therefore, the same argument as in the proof of \edited{Proposition~\ref{prop:dimLEmodIm}} shows that $2g(B) + h^0(B) + b_1(\Gamma) -1 = 2g(B^0) - \#\calE'$, where $\calE'$ is the set of exceptional curves obtained by blowing up the singularities of $B^0$. (We have $\#(\calE \setminus \calE') = 4$ corresponding to the extra irreducible components of $S_0$ and $S_w$ -- see Remark~\ref{rmk:exceptional}.) Therefore}
					\dim_{\F_2}\frakL' &=
					2g( B^0) + h^0(\Bfl)
					+ \dim_{\F_2}\left(\frac{K^{\times}\cap 
					L^{\times2}}{K^{\times2}}\right)
					\\					
					&\quad\quad-\#\calE'-\#\{w\in\PP^1 : 2|e(b/w) \;\forall b\in \BOfl_w
					\textup{ and }S_w\not\subseteq B^0\}\,.
					\intertext{Now noting that $\Pic^0 X = 0$ and so the rank of $\NS X$ is 
						\[
							\#\calE' + 4 + 1 + 
							\#\{w\in \PP^1\setminus\{0,\infty\} : 2|e(b/w) \;\forall b\in \BOfl_w 
							\textup{ and }S_w\not\subseteq B^0\}
							+\rank(\Pic C)\,,
						\]
						we conclude that 
					}
					\dim\frakL'&= 2g(B^0) + h^0(\Bfl)
					+ \dim_{\F_2}\left(\frac{K^{\times}\cap 
					L^{\times2}}{K^{\times2}}\right) +5 + \rank(\Pic C)
					\\					
					&\quad\quad-\rank \NS X - \Delta_0 - \Delta_\infty,
					\intertext{where $\Delta_w$ equals $1$ if $e(b/w)$ is even for all $b\in \Bfl_w$ and $(S_w)_{\textup{red}}\not\subseteq B^0$ and $0$ otherwise.  Since $X_0$ and $X_\infty$ are not reduced and ${\tilde{B}^0}$ did not contain the fixed points of $\tau$, both $\Delta_0$ and $\Delta_\infty$ are $1$.  In addition, $\rank(\NS X) = \rank(\NS E) + 4 = 10 + 4$, and since $B^0$ is the quotient of a genus $9$ curve by a fixed-point free involution, $g(B^0) = 5$. Hence, rearranging and using Lemma~\ref{lem:dimImage} we obtain,}
					\dim\frakL'&= h^0(\Bfl) - 1
					+ \dim_{\F_2}\left(\frac{K^{\times}\cap 
					L^{\times2}}{K^{\times2}}\right)
					+ \rank(\Pic C)\\
					&\ge \dim_{\F_2}\im(x-\alpha) + 1\,.
				\end{align*}
				Therefore, some element of $\frakL'$ has nontrivial image in $\Br \kk(X)$ and so must be equal to the unique nontrivial element of $\Br X$.
			\end{proof}
		\begin{remark}
			If $B^0$ is smooth, it is possible to prove Theorem~\ref{thm:BrauerEnriques} without using that the N\'eron-Severi group of an Enriques surface has rank $10$ and that the Brauer group of an Enriques surface is $\Z/2\Z$.  One can instead prove that all other functions in $\frakL_{c,\calE}$ are ramified along some vertical divisor.  However, this proof is more complicated as it requires a detailed study of the desingularization (or at least the normalization) of the fiber product $X\times_{\PP^1} \Bfl$.
		\end{remark}

	%%%%%%%%%%%%%%%%%%%%%%%%%%%%%%%%%%%%%%%%%%%%%%%%%%%%%%%%%%%%%%%%%%%%%%%%%%%%
	\section{An Enriques surface failing weak approximation}\label{sec:BMobs}%%%
	%%%%%%%%%%%%%%%%%%%%%%%%%%%%%%%%%%%%%%%%%%%%%%%%%%%%%%%%%%%%%%%%%%%%%%%%%%%%
		We demonstrate the previous results are amenable to explicit computation by exhibiting an Enriques surface with a transcendental Brauer-Manin obstruction to weak approximation.  We note that it was already known that Enriques surfaces need not satisfy weak approximation due to work of Harari and Skorobogatov~\cite{HS-Enriques}, who constructed an Enriques surface whose \'etale-Brauer set was strictly smaller than its Brauer set.
		
		%%%%%%%%%%%%%%%%%%%%%%%%%%%%%%%%%%%%%%%%%%%%%%%%%%%%%%%%%%%%%%%%%%%%%%%%
		\subsection{Construction of the Enriques surface}%%%%%%%%%%%%%%%%%%%%%%%
		%%%%%%%%%%%%%%%%%%%%%%%%%%%%%%%%%%%%%%%%%%%%%%%%%%%%%%%%%%%%%%%%%%%%%%%%

		%z0 = zu, z1 = zt, z2 = xu, z3 = xt
		Let $\tilde{S}$ denote the quadric surface $V(z_0z_3 - z_1z_2) \subseteq \PP^3$ and let $\tilde{B^0}\subseteq \tilde{S}$ be the (reducible) quartic curve given by the vanishing of
		\[
			F(\mathbf{z}) := 
			\left(z_3^2  - 3z_2^2 - 3z_1^2 - 2z_0^2 + 3z_3z_0 \right)^2 - 
			\left(z_3z_1 - 2z_3z_2 + 4z_0z_2 + z_0z_1\right)^2. 
		\]
		We let $\tilde{E}$ denote the minimal {resolution }%desingularization
		of the double cover 
		\[
			\tilde{E^0} := V(y^2 - F(\mathbf{z}), 
			z_0z_3 - z_1z_2)\subseteq\PP(1,1,1,1,2);
		\]
		note that $\tilde{E}$ is a K3 surface defined over $\Q$.  There is a fixed point free involution $\sigma^0\colon \tilde{E^0} \to \tilde{E^0}, \quad (z_0:z_1:z_2:z_3:y) \mapsto (z_0:-z_1:-z_2:z_3:-y)$ that can be lifted to a fixed point free involution $\sigma$ on $\tilde{E}$.

		Let $E$ denote the Enriques surface $\tilde{E}/\sigma$. We denote the base change of $E$ to an algebraic closure of $\Q$ by $\Ebar$.
		
		\begin{prop}
			\label{prop:Example}
			There exists a number field $\F$ such that 
			\[
				E(\A_{\F})^{\Br E_{\F}} \subsetneq E(\A_{\F})^{\Br_1 E_{\F}}.
			\]
		\end{prop}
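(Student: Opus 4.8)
The plan is to exploit the explicit central simple algebra representative produced by Theorem~\ref{thm:BrauerEnriques} together with a fibrewise computation of its local invariants. Set $\calA := \gamma'(\ell_\calC\ell_D)$ for a suitable two-torsion class $[D]\in\Jac(\Bfl)[2]$ and cycle $\calC$ on $\Gamma$; by that theorem the class of $\calA$ in $\Br\Ebar = \Z/2\Z$ is the unique nontrivial element. The first task is to make everything explicit and to choose the ground field. Since $F(\mathbf z)=G^2-H^2$ factors with $G=z_3^2-3z_2^2-3z_1^2-2z_0^2+3z_3z_0$ and $H=z_3z_1-2z_3z_2+4z_0z_2+z_0z_1$, the branch curve $\tilde B^0$ is the union of the two $(2,2)$-curves cut out on $\tilde S$ by $G\mp H$; I would read off from this the components of $B^0$, its dual graph $\Gamma$, and an explicit representative of $[D]$, and then take $\F$ to be any number field over which these components, the class $[D]$, and the cycle $\calC$ are all rational (and over which $E$ has points everywhere locally). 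Over such an $\F$ the functions $\ell_\calC,\ell_D\in L^\times$ are defined, so $\calA\in\Br\kk(E_\F)$, and its unramifiedness over $E_\F$ follows by base-changing the residue computations in the proof of Theorem~\ref{thm:BrauerEnriques}. Because the image of $\calA$ in $\Br\Ebar$ is nonzero, $\calA\notin\Br_1 E_\F$, so $\Br E_\F/\Br_1 E_\F$ is generated by $\calA$ and the transcendental obstruction is governed entirely by the single function $(P_v)\mapsto\sum_v\inv_v(\calA(P_v))$.

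The heart of the argument is the local invariant computation. Using $\calA=\Cor_{\kk(C_L)/\kk(C)}(\ell_\calC\ell_D,\,x-\alpha)_2$ together with~\eqref{eq:purity-cores}, the value $\calA(P_v)$ at a local point $P_v$ lying over $t\in\PP^1(\F_v)$ is a product of corestricted Hilbert symbols that can be evaluated fibre by fibre. At every place of good reduction $\calA$ is unramified with values in squares, so $\inv_v(\calA(P_v))=0$; this leaves only the archimedean places, the primes dividing the discriminant of $F$, and the primes of bad reduction to examine. The crux is to identify one place $v_0$ at which $\inv_{v_0}(\calA(\,\cdot\,))$ is \emph{nonconstant} on $E(\F_{v_0})$, taking both values $0$ and $\tfrac12$, while verifying that at every other place the invariant is constant (and, after a harmless adjustment of $\calA$ by a constant algebra, identically $0$) on $E(\F_v)$. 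For a carefully engineered $\F$ and $v_0$ this reduces to a finite Hilbert-symbol calculation in the completion $\F_{v_0}$.

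Finally I would assemble the obstruction and check compatibility with the algebraic classes. Choosing $P_{v_0}\in E(\F_{v_0})$ so that $\inv_{v_0}(\calA(P_{v_0}))=\tfrac12$ and the remaining $P_v\in E(\F_v)$ freely, the resulting adelic point has $\sum_v\inv_v(\calA(P_v))=\tfrac12\neq 0$ and hence lies outside $E(\A_\F)^{\Br E_\F}$. To obtain the strict containment it remains to arrange that this point lies in $E(\A_\F)^{\Br_1 E_\F}$, i.e.\ that it is orthogonal to every algebraic class. I would compute $\Br_1 E_\F/\Br\F=\HH^1(\F,\Pic\Ebar)$ (or a sufficient subgroup), note that it imposes only finitely many orthogonality conditions, and satisfy these simultaneously using the local freedom available at the places where $\calA$ has constant invariant. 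The main obstacle is precisely this local analysis: pinning down the single place $v_0$ where the transcendental invariant varies, confirming the Hilbert-symbol computation there, and keeping enough flexibility at the remaining places to kill the algebraic obstruction at the same time. Orchestrating all of this is what forces the specific shape of the polynomial $F$ and the particular choice of $\F$.
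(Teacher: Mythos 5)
Your overall strategy matches the paper's: take the explicit algebra $\gamma'(\ell_\calC\ell_D)$ from Theorem~\ref{thm:BrauerEnriques}, descend it to a number field $\F$, find one place where its local invariant is nonconstant, and then check the resulting adelic point against the algebraic classes. However, there is a genuine gap at the descent step. You assert that for \emph{any} $\F$ over which $\ell_\calC$ and $\ell_D$ are defined, the unramifiedness of $\calA$ over $E_\F$ ``follows by base-changing the residue computations'' from the proof of Theorem~\ref{thm:BrauerEnriques}. This is false. The residues of $\gamma'(\ell)$ at the exceptional $(-2)$-curves are constants in the residue fields of those (rational) curves; they are squares over the separably closed field $k$, but that says nothing over $\F$. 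In the paper's example $\ell$ is already defined over the degree-$4$ field $\F'$, yet a direct computation shows $\gamma'(\ell)$ is ramified at \emph{all} of the $(-2)$-curves over $\F'$. The paper must (a) construct a specific degree-$16$ extension $\F/\F'$ over which these residues become squares, and (b) correct the algebra by the algebraic classes $(t-1,\ell(10))_2+(t-9,\ell(-2))_2$ before obtaining an element $A\in\Br E_\F$ mapping to the nontrivial class of $\Br\Ebar$. Without this, there is no unramified representative over a number field and the rest of your argument has nothing to evaluate; this descent is the technical heart of the example and forces the particular choice of $\F$.

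Two smaller points. First, you do not need to verify that the invariant of $\calA$ is constant at every place other than $v_0$: the paper starts from a global point $Q\in E(\Q)$, for which $\sum_v\inv_v A(Q)=0$ automatically, and then modifies the adelic point at the single place $v_0$, so only the difference $\inv_{v_0}A(Q_{v_0})-\inv_{v_0}A(Q)$ must be computed. Second, your plan to achieve orthogonality to $\Br_1 E_\F$ by ``using the local freedom at the remaining places'' is not carried out and is not obviously compatible with preserving $\sum_v\inv_v\calA(P_v)=\tfrac12$; the paper instead computes $\HH^1(G_\F,\Num\Ebar)=0$ and observes that the chosen adelic point lies in $\psi(\tilde{E}(\A_\F))\subseteq E(\A_\F)^{\Br_1}$, i.e.\ it lifts to the K3 double cover, which settles orthogonality to all algebraic classes at once.
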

		\begin{remark}
			The surface $X$ from the previous section is a blow-up of the Enriques surface $E$.  Since the Brauer group is a birational invariant of smooth projective surfaces, $\Br \Ebar = \Br \Xbar$.  We will use this equality throughout this section. 
		\end{remark}

		%%%%%%%%%%%%%%%%%%%%%%%%%%%%%%%%%%%%%%%%%%%%%%%%%%%%%%%%%%%%%%%%%%%%%%%%
		\subsection{Coordinates on the generic fiber}%%%%%%%%%%%%%%%%%%%%%%%%%%%
		%%%%%%%%%%%%%%%%%%%%%%%%%%%%%%%%%%%%%%%%%%%%%%%%%%%%%%%%%%%%%%%%%%%%%%%%
			The map $\PP^4 \to \PP^1, \textbf{w}\mapsto (w_1:w_0)$, induces a morphism $\varpi\colon \tilde{S}/\tau \to \PP^1$ whose generic fiber is isomorphic to $\PP^1_{\Q(t)}$.  The generic fiber of $E\to \tilde{S}/\tau\to \PP^1$ has a model of the form $v^2 = c f(x)$, where
			\[
				f(x) = x^4 + \frac{10t - 2}{t - 10 + 9/t}x^3 
				+ \frac{-7t^2 + 19t - 4}{t - 10 + 9/t}x^2
				+ \frac{-20t^2 - 20t}{t - 10 + 9/t}x
				+ \frac{9t^3 + 11t^2 + 4t}{t - 10 + 9/t}
			\]  
			and $c = t - 10 + 9/t$.  
			The isomorphism between this model and $E$ identifies $t$ with $w_3/w_2 = w_1/w_0 = w_4^2/(w_0w_2)$ and identifies $x$ with $w_4/w_0 = w_3/w_4 = tw_2/w_4$.
		%%%%%%%%%%%%%%%%%%%%%%%%%%%%%%%%%%%%%%%%%%%%%%%%%%%%%%%%%%%%%%%%%%%%%%%%
		\subsection{Geometry of the branch curve}%%%%%%%%%%%%%%%%%%%%%%%%%%%%%%%
		%%%%%%%%%%%%%%%%%%%%%%%%%%%%%%%%%%%%%%%%%%%%%%%%%%%%%%%%%%%%%%%%%%%%%%%%
			Let $E^0 := \tilde E^0/\sigma^0$; the morphism $E^0\to \tilde S/\tau$ is branched over the four singular points of $\tilde S/\tau$ and the irreducible curve $B^0 := \tilde{B}^0/\tau$. The singular locus of $B^0$ is a degree $5$, $0$-dimensional reduced subscheme.  It consists of one $\Q$-point, $P_0 = (1:1:1:1:1)$, which corresponds to the singular point of an irreducible component of $B^0$, and a degree $4$ point which is irreducible over $\Q$.  The curve {$B^0$} is embedded in $\PP^4$ as the complete intersection of $3$ quadrics so it has arithmetic genus $5$.  One can check that each singularity is an ordinary double point, thus {$B^0$} is geometrically rational.  A naive point search quickly finds smooth $\Q$-points, so {$B^0$ is birational to $\PP^1_\Q$}  and $L \isom \Q(s)$.  We fix the following isomorphism between $L$ and $\Q(s):$
			\[
				\frac{w_1}{w_0} = 
				\left(\frac{2s^2 - 16s + 41}{-s^2 - s + 29}\right)^2, \quad 
				\frac{w_2}{w_0} = 
				\left(\frac{2s^2 - 7s + 32}{-s^2 + 8s + 20}\right)^2, \quad
			\]
			$w_4/w_0 = \sqrt{w_1w_2/w_0^2}$, and $w_3/w_0 = w_1w_2/w_0^2.$  Using the above expression for $f(x)$, we see that this isomorphism sends
			\[
				\alpha\mapsto
				\frac{4s^4 - 46s^3 + 258s^2 - 799s + 1312}{s^4 - 7s^3 - 57s^2 + 212s + 580}.
			\]

		%%%%%%%%%%%%%%%%%%%%%%%%%%%%%%%%%%%%%%%%%%%%%%%%%%%%%%%%%%%%%%%%%%%%%%%%
		\subsection{Representing a transcendental Brauer class}%%%%%%%%%%%%%%%%%
		\label{subsec:nontrivialBrauer}%%%%%%%%%%%%%%%%%%%%%%%%%%%%%%%%%%%%%%%%%
		%%%%%%%%%%%%%%%%%%%%%%%%%%%%%%%%%%%%%%%%%%%%%%%%%%%%%%%%%%%%%%%%%%%%%%%%
			
			Let $\F'$ denote the residue field of the singular degree $4$ $\Q$-subscheme, and let $P_1$ denote a $\F'$-point of the singular locus different from $P_0$.  (In fact, $\F'$ is an $S_4$ extension, so there is a unique such $P_1$.) The field $\F$ in Proposition~\ref{prop:Example} will be an extension of $\F'$. We write $x_1 := w_4/w_0(P_1)$ and $t_1 := w_1/w_0(P_1)$.  Let $\ell$ be the monic quadratic separable $\F'$-polynomial in $s$ whose zeros lie above $P_1$.  By Theorem~\ref{thm:BrauerEnriques}, $\gamma'(\ell)$ is contained in $\Br \Ebar$. We claim that it represents the nontrivial element in $\Br \Ebar$.  
			
			Using linear algebra and elimination ideals, we find equations for curves on $\tilde{S}$ which pass through an even number of the $\Qbar$ points that are Galois conjugate to $P_1$, and meet ${B^0}$ with even multiplicity at every point of intersection.  There are finitely many such curves $Z$, and one checks that for every set of singular points of ${B^0}$ containing an even number of the $\Gal(\Qbar/\Q)$-conjugates of $P_1$, there is such a reduced and irreducible curve $Z$ such that $\pi^{-1}(Z)$ is reducible.  By computing intersection numbers, one sees that these curves, the rulings on $\tilde{S}$, and the exceptional curves generate $\Pic \Ebar$.  Therefore, the above curves and the horizontal ruling on $S$ generates $\Pic \Cbar_{\Qbar(t)}$.  By the construction of these curves, any function in the image of $x - \alpha$ that has odd valuation at both points in $\nu^{-1}(P_1)$ will have odd valuation at both points in $\nu^{-1}(P_i)$ for an odd number of points $P_i$ that are Galois conjugate to $P_1$.  Therefore $\overline\ell\not\in \im(x-\alpha)$, and so, by Theorem~\ref{thm:BrauerEnriques}, $\gamma'(\ell)$ is nontrivial in $\Br \Ebar.$
			
			Now we will compute a number field $\F$ such that $\gamma'(\ell) \in \im(\Br E_{\F} \to \Br \Ebar)$.  Since $\ell$ is defined over $\F'$, we have $\gamma'(\ell)\in \Br\kk(E_{\F'})$.  However, a direct computation shows that $\gamma'(\ell)$ is ramified at all of the $(-2)$-curves.  There are two linearly independent quadratic extensions of $\F'$ over which the residues at the exceptional curve above $P_0$ and $P_1$ respectively become trivial.  Then there is a degree $4$ extension over the composite of these quadratic extensions over which the residue at the exceptional curve above the degree three singular point becomes trivial.\footnote{A \texttt{Magma}~\cite{MAGMA} script verifying these claims and all other computational claims in this section can be found with the arXiv distribution of this article.}  We let $\F$ denote this degree $16$ extension over $\F'$.
			
			We claim that $\gamma'(\ell) \in \im(\Br E_{\F}\to \Br \Ebar)$, or, more precisely, that the algebra
			\[
				A :=  \gamma'(\ell) + (t - 1,\ell(10))_2 + (t - 9, \ell(-2))_2
			\]
			lies in $\Br E_{\F}$. The algebras $(t - 1,\ell(10))_2$ and $(t - 9, \ell(-2))_2$ are algebraic, so it is clear that the second claim implies the first.  To prove that $A\in \Br E_{\F}$, we must show that $\partial_{D}(A)\in\kk(D)/\kk(D)^{\times2}$ for all prime divisors $D$ on $X$.  From the definition of $A$ and $\F$ and the proofs of Theorem~\ref{thm:gammawelldefined} and Proposition~\ref{prop:UnramifiedConditions}, this is certainly true except possibly for the fibers above $t = 0,\infty$ and for $\mathfrak{S}_\infty$.  One can directly check that $\Norm_{\F'(s)/\F'(t)}(\ell)\in \F'(t)^{\times2}$, so by Theorem~\ref{thm:gammawelldefined}, $\gamma'(\ell)$, and therefore $A$, is unramified at $\mathfrak{S}_{\infty}.$  To compute the residue at the fibers above $t=0,\infty$, we note that $(\tilde{S}\times_{\PP^1_{\sqrt{t}}}\tilde{B})/\tau$ is a smooth birational model of $\tilde{S}/\tau \times_{\PP^1_t}B$.  Then a direct computation using~\eqref{eq:resatF} shows that $\tilde\gamma'(\ell)$ and $\gamma'(\ell)$ are unramified at the fibers above $t = 0,\infty$.  Thus $A\in\Br E_{\F}\setminus\Br_1 E_{\F}$.

		%%%%%%%%%%%%%%%%%%%%%%%%%%%%%%%%%%%%%%%%%%%%%%%%%%%%%%%%%%%%%%%%%%%%%%%%
		\subsection{Computing the obstruction}%%%%%%%%%%%%%%%%%%%%%%%%%%%%%%%%%%
		%%%%%%%%%%%%%%%%%%%%%%%%%%%%%%%%%%%%%%%%%%%%%%%%%%%%%%%%%%%%%%%%%%%%%%%%

			In this section we will construct a $\F$-adelic point on $E$ that is not orthogonal to $A$.  First note that $\tilde{E}$, and therefore $E$, has $\Q$-rational points.  Indeed, $(1:0:0:0:\pm 2)\in \tilde{E}^0(\Q)$, and since $\tilde{E}\to \tilde{E}^0$ is an isomorphism when $y\neq 0$,  $Q := \psi(1:0:0:0:\pm 2)\in {E}(\Q)$ (here $\psi\colon \tilde{E}\to E$ denotes the quotient map).  Hence, if we find a place $v$ of $\F$ and a point $Q_v\in E(\F_v)$ such that $\inv_v A(Q_v) \neq \inv_v A(Q)$, then the adelic point that is equal to $Q$ for all places $w \neq v$ and equal to $Q_v$ at $v$ is orthogonal to $A$.

			We will take $v$ to be a place lying over $2$.  We note that $2$ splits completely in $\F'$, and of these four places lying over $2$, there is a unique place and a unique extension $v$ of that place to $\F$ such that $\F_v = \Q_2(i).$  Since $A\in\Br \kk(E_{\F'})$ and $A$ is unramified on an open set of $E_{\F'}$ containing $Q$, $A(Q)\in \Br \F'$ and, consequently, $A(Q)\otimes_{\F}\F_v \in\im(\Br \Q_2\to \Br \Q_2(i))$.  Thus $\inv_v A(Q) = 0$.  Let
			\[
				Q_v := \psi(1 : -6 : 1 + i : -6 - i : 2\sqrt{4255 - 4160i}).
			\]
			The point $Q_v$ lies over $t = 36$ and ${B}_t$ consists of $2$ $\F_v$-points $R_1$ and $R_2$ and one quadratic point $R$; they have $s$ values
			\[
				\frac{19}4, \quad \frac{-7}2, \quad\textup{ and }\quad
				\frac{-11 + 3\sqrt{109}}4.
			\]
			
			Since these points are unramified in ${B}\to \PP^1$ and are away from the support of $\ell$ and $\alpha$, the cocycle description of $\gamma'(\ell)$ in~\cite[Lem. 3.5]{CV-BrauerCurves}%\Editnote{check citation before final publication}
			 shows that
			\begin{align*}
				A(Q_v) &= (\ell(-2), 25)_2 + (\ell(-10), 35)_2 + \gamma'(\ell)(Q_v) 
				\\
				& = 
				(\ell(10), 35)_2 +
				\Cor_{\Q_2(i,\sqrt{109})/\Q_2(i)}\left((x(Q_v) - \alpha(R), \ell(R))_2\right)\\
				&\phantom{=}\quad+ \sum_{j=1}^2 (x(Q_v) - \alpha(R_j), \ell(R_j))_2.
			\end{align*}
			A computation shows that $(x(Q_v) - \alpha(R_j), \ell(R_j))_2$ is trivial in $\Br \Q_2(i)$ for all $i$, that $(\ell(10), 35)_2$ is trivial in $\Br \Q_2(i)$, and that $\Cor_{\Q_2(i,\sqrt{109})/\Q_2(i)}\left((x(Q_v) - \alpha(R), \ell(R))_2\right)$ is nontrivial in $\Br \Q_2(i)$.  Therefore $\inv_v A(Q_v) = 1/2.$
			
		%%%%%%%%%%%%%%%%%%%%%%%%%%%%%%%%%%%%%%%%%%%%%%%%%%%%%%%%%%%%%%%%%%%%%%%%
		\subsection{Determining the algebraic Brauer classes}%%%%%%%%%%%%%%%%%%%
		%%%%%%%%%%%%%%%%%%%%%%%%%%%%%%%%%%%%%%%%%%%%%%%%%%%%%%%%%%%%%%%%%%%%%%%%
			It remains to prove that this failure of weak approximation is not accounted for by algebraic Brauer classes.  In~\ref{subsec:nontrivialBrauer}, we outlined how to obtain generators for $\Pic \Ebar$.  Given these generators, a standard, although involved, computation shows that $\HH^1(G_\F, \Num \Ebar) = 0$, and so $\HH^1(G_\F, \langle K_E\rangle)\twoheadrightarrow \HH^1(G_\F, \Pic \Ebar)$.  Then, the isomorphism from the Hochschild-Serre spectral sequence and~\cite[Thm. 6.2.1]{Skorobogatov-torsors} shows that $\psi(\tilde{E}(\A_\F))\subseteq E(\A_\F)^{\Br_1}$.  Since the adelic point considered in the previous section is contained in $\psi(\tilde{E}(\A_\F))$, this shows that the adelic point above lies in $E(\A_\F)^{\Br_1}$, as desired.

		%%%%%%%%%%%%%%%%%%%%%%%%%%%%%%%%%%%%%%%%%%%%%%%%%%%%%%%%%%%%%%%%%%%%%%%%
		\section*{Acknowledgements}%%%%%%%%%%%%%%%%%%%%%%%%%%%%%%%%%%%%%%%%%%%%%
		%%%%%%%%%%%%%%%%%%%%%%%%%%%%%%%%%%%%%%%%%%%%%%%%%%%%%%%%%%%%%%%%%%%%%%%%
			The second author would like to thank Dan Abramovich, Asher Auel, Jean-Louis Colliot-Th\'el\`ene, and Bjorn Poonen for helpful conversations.

%%%%%%%%%%%%%%%%%%%%%%%%%%%%%%%%%%%%%%%%%%%%%%%%%%%%%%%%%%%%%%%%%%%%%%%%%%%%%%%%
%%%%%%%%%%%%%%%%%%				Bibliography			%%%%%%%%%%%%%%%%%%%%%%%%
%%%%%%%%%%%%%%%%%%%%%%%%%%%%%%%%%%%%%%%%%%%%%%%%%%%%%%%%%%%%%%%%%%%%%%%%%%%%%%%%
	\begin{bibdiv}
		\begin{biblist}
			
			\bib{BHPvdV}{book}{
			   author={Barth, Wolf P.},
			   author={Hulek, Klaus},
			   author={Peters, Chris A. M.},
			   author={Van de Ven, Antonius},
			   title={Compact complex surfaces},
			   series={Ergebnisse der Mathematik und ihrer Grenzgebiete. 3. 
			   		Folge. A Series of Modern Surveys in Mathematics [Results 
					in Mathematics and Related Areas. 3rd Series. A Series of 
					Modern Surveys in Mathematics]},
			   volume={4},
			   edition={2},
			   publisher={Springer-Verlag},
			   place={Berlin},
			   date={2004},
			   pages={xii+436},
			}
			
			\bib{Beauville-CAS}{book}{
			   author={Beauville, Arnaud},
			   title={Complex algebraic surfaces},
			   series={London Mathematical Society Student Texts},
			   volume={34},
			   edition={2},
			   note={Translated from the 1978 French original by R. Barlow, with
			   assistance from N. I. Shepherd-Barron and M. Reid},
			   publisher={Cambridge University Press},
			   place={Cambridge},
			   date={1996},
			   pages={x+132},
			   isbn={0-521-49510-5},
			}

			\bib{MAGMA}{article}{
			   author={Bosma, Wieb},
			   author={Cannon, John},
			   author={Playoust, Catherine},
			   title={The Magma algebra system. I. The user language},
			   note={Computational algebra and number theory (London, 1993)},
			   journal={J. Symbolic Comput.},
			   volume={24},
			   date={1997},
			   number={3-4},
			   pages={235--265},
			   issn={0747-7171},
			}
		
			\bib{CV-BrauerCurves}{article}{
				author = {Creutz, Brendan},
				author = {Viray, Bianca},
				title = {Two torsion in the Brauer group of a hyperelliptic curve},
				note = {preprint, 2014},
				eprint={arXiv:1403.2924},
			}
			
%			\bib{Fujiwara-purity}{article}{
%			   author={Fujiwara, Kazuhiro},
%			   title={A proof of the absolute purity conjecture (after Gabber)},
%			   conference={
%			      title={Algebraic geometry 2000, Azumino (Hotaka)},
%			   },
%			   book={
%			      series={Adv. Stud. Pure Math.},
%			      volume={36},
%			      publisher={Math. Soc. Japan},
%			      place={Tokyo},
%			   },
%			   date={2002},
%			   pages={153--183},
%			}

			\bib{GS-csa}{book}{
			    author={Gille, Philippe},
			    author={Szamuely, Tam{\'a}s},
			    title={Central simple algebras and Galois cohomology},
			    series={Cambridge Studies in Advanced Mathematics},
			    volume={101},
			    publisher={Cambridge University Press},
			    place={Cambridge},
			    date={2006},
			    pages={xii+343},
			    isbn={978-0-521-86103-8},
			    isbn={0-521-86103-9},
			}

			\bib{Grothendieck-BrauerIII}{article}{
			   author={Grothendieck, Alexander},
			   title={Le groupe de Brauer. III. Exemples et compl\'ements},
			   language={French},
			   conference={
			      title={Dix Expos\'es sur la Cohomologie des Sch\'emas},
			   },
			   book={
			      publisher={North-Holland, Amsterdam; Masson, Paris},
			   },
			   date={1968},
			   pages={88--188},
			}
			
			\bib{Harari-transcendental}{article}{
			   author={Harari, David},
			   title={Obstructions de Manin transcendantes},
			   language={French},
			   conference={
			      title={Number theory},
			      address={Paris},
			      date={1993--1994},
			   },
			   book={
			      series={London Math. Soc. Lecture Note Ser.},
			      volume={235},
			      publisher={Cambridge Univ. Press},
			      place={Cambridge},
			   },
			   date={1996},
			   pages={75--87},
			}

			\bib{HS-Enriques}{article}{
			   author={Harari, David},
			   author={Skorobogatov, Alexei},
			   title={Non-abelian descent and the arithmetic of Enriques surfaces},
			   journal={Int. Math. Res. Not.},
			   date={2005},
			   number={52},
			   pages={3203--3228},
			   issn={1073-7928},
			}
			
			\bib{Hartshorne}{book}{
			   author={Hartshorne, Robin},
			   title={Algebraic geometry},
			   note={Graduate Texts in Mathematics, No. 52},
			   publisher={Springer-Verlag},
			   place={New York},
			   date={1977},
			   pages={xvi+496},
			   isbn={0-387-90244-9},
			}			

			\bib{HVA-K3Hasse}{article}{
			   author={Hassett, Brendan},
			   author={V{\'a}rilly-Alvarado, Anthony},
			   title={Failure of the Hasse principle on general $K3$ surfaces},
			   journal={J. Inst. Math. Jussieu},
			   volume={12},
			   date={2013},
			   number={4},
			   pages={853--877},
			   issn={1474-7480}
			 }

			\bib{HVAV-K3}{article}{
			   author={Hassett, Brendan},
			   author={V{\'a}rilly-Alvarado, Anthony},
			   author={Varilly, Patrick},
			   title={Transcendental obstructions to weak approximation on general K3
			   surfaces},
			   journal={Adv. Math.},
			   volume={228},
			   date={2011},
			   number={3},
			   pages={1377--1404},
			   issn={0001-8708},
			}
			
			\bib{Ieronymou-transcendental}{article}{
			   author={Ieronymou, Evis},
			   title={Diagonal quartic surfaces and transcendental elements of the
			   Brauer groups},
			   journal={J. Inst. Math. Jussieu},
			   volume={9},
			   date={2010},
			   number={4},
			   pages={769--798},
			   issn={1474-7480},
			}

			\bib{Kato}{article}{
			   author={Kato, Kazuya},
			   title={A Hasse principle for two-dimensional global fields},
			   note={With an appendix by Jean-Louis Colliot-Th\'el\`ene},
			   journal={J. Reine Angew. Math.},
			   volume={366},
			   date={1986},
			   pages={142--183},
			   issn={0075-4102},
			}
			
			\bib{KT-effectivity}{article}{
			   author={Kresch, Andrew},
			   author={Tschinkel, Yuri},
			   title={Effectivity of Brauer-Manin obstructions on surfaces},
			   journal={Adv. Math.},
			   volume={226},
			   date={2011},
			   number={5},
			   pages={4131--4144},
			   issn={0001-8708},
			}

			\bib{Manin-BMobs}{article}{
			   author={Manin, Yuri I.},
			   title={Le groupe de Brauer-Grothendieck en g\'eom\'etrie 
						diophantienne},
			   conference={
			      title={Actes du Congr\`es International des 
					Math\'ematiciens},
			      address={Nice},
			      date={1970},
			   },
			   book={
			      publisher={Gauthier-Villars},
			      place={Paris},
			   },
			   date={1971},
			   pages={401--411},
			}

			\bib{Milnor}{book}{
			   author={Milnor, John},
			   title={Singular points of complex hypersurfaces},
			   series={Annals of Mathematics Studies, No. 61},
			   publisher={Princeton University Press},
			   place={Princeton, N.J.},
			   date={1968},
			   pages={iii+122},
			}			
			
			\bib{PS-descent}{article}{
			   author={Poonen, Bjorn},
			   author={Schaefer, Edward F.},
			   title={Explicit descent for Jacobians of cyclic covers of the projective
			   line},
			   journal={J. Reine Angew. Math.},
			   volume={488},
			   date={1997},
			   pages={141--188},
			   issn={0075-4102},
			}
			
			\bib{Preu-transcendental}{article}{
			   author={Preu, Thomas},
			   title={Example of a transcendental 3-torsion Brauer-Manin obstruction on a diagonal quartic surface},
			   book={
			   	   title={Torsors, \'Etale Homotopy and Applications to Rational Points},
				   series={London Math. Soc. Lecture Note Ser.},
				   volume={405},
				   editor={Skorobogatov, Alexei N.},
				   publisher={Cambridge University Press},
				   place={Cambridge},
				   pages={468},
			   	   date={2013},
				   isbn={ISBN-13: 9781107616127}
				},
			  date={2013}
			}
				
			\bib{Skorobogatov-torsors}{book}{
		    	  author={Skorobogatov, Alexei N.},
		     	  title={Torsors and rational points},
		    	  series={Cambridge Tracts in Mathematics},
		    	  volume={144},
		 	  publisher={Cambridge University Press},
		     	  place={Cambridge},
		      	  date={2001},
		     	  pages={viii+187},
		      	  isbn={0-521-80237-7},
			}
			
			\bib{SSD-2descent}{article}{
			   author={Skorobogatov, Alexei N.},
			   author={Swinnerton-Dyer, Peter},
			   title={2-descent on elliptic curves and rational points on certain Kummer
			   surfaces},
			   journal={Adv. Math.},
			   volume={198},
			   date={2005},
			   number={2},
			   pages={448--483},
			   issn={0001-8708},
			}
			
			\bib{SkorobogatovZarhin}{article}{
			   author={Skorobogatov, Alexei N.},
			   author={Zarhin, Yuri G.},
			   title={The Brauer group of Kummer surfaces and torsion of elliptic
			   curves},
			   journal={J. Reine Angew. Math.},
			   volume={666},
			   date={2012},
			   pages={115--140},
			   issn={0075-4102},
			}
						
			\bib{Wittenberg-transcendental}{article}{
			   author={Wittenberg, Olivier},
			   title={Transcendental Brauer-Manin obstruction on a pencil of elliptic
			   curves},
			   conference={
			      title={Arithmetic of higher-dimensional algebraic varieties (Palo
			      Alto, CA, 2002)},
			   },
			   book={
			      series={Progr. Math.},
			      volume={226},
			      publisher={Birkh\"auser Boston},
			      place={Boston, MA},
			   },
			   date={2004},
			   pages={259--267},
			}
			
		\end{biblist}
	\end{bibdiv}

\end{document}